\newcommand{\bp}{\begin{pmatrix}}
\newcommand{\ep}{\end{pmatrix}}
\newcommand{\be}{\begin{equation}}
\newcommand{\ee}{\end{equation}}
\newcommand{\ol}[1]{\overline{#1}}
\numberwithin{equation}{section}
\theoremstyle{plain}
\newtheorem{theorem}[equation]{Theorem}
\newtheorem*{theorem*}{Theorem}
\newtheorem{lemma}[equation]{Lemma}
\newtheorem{proposition}[equation]{Proposition}
\newtheorem{corollary}[equation]{Corollary}
\theoremstyle{definition}
\newtheorem{example}[equation]{Example}
\newtheorem{definition}[equation]{Definition}
\theoremstyle{remark}
\newtheorem{remark}[equation]{Remark}
\newtheorem*{ack}{Acknowledgements}
\theoremstyle{plain}
\numberwithin{equation}{section}
\newcommand{\intfrac}[2]{\genfrac{\lfloor}{\rfloor}{}{1}{#1}{#2}}
\newcommand{\rintfrac}[2]{\genfrac{\lceil}{\rceil}{}{1}{#1}{#2}}
\def\Z{\mathbb Z}
\def\R{\mathbb R}
\def\Q{\mathbb Q}
\def\C{\mathbb C}
\def\F{\mathbb F}
\def\N{\mathbb N}
\def\wt#1{\widetilde{#1}}
\def\CP{\C P}
\def\cfk{\mathit{CFK}}
\def\cf{\mathit{CF}}
\def\cfi{\mathit{CFI}}
\def\hfi{\mathit{HFI}}
\def\hfk{\mathit{HFK}}
\def\hf{\mathit{HF}}
\def\und{\underline{d}}
\def\ovd{\overline{d}}
\def\unV{\underline{V}}
\def\ovV{\overline{V}}
\newcommand{\sss}{\ifmmode{{\mathfrak s}}\else{${\mathfrak s}$\ }\fi}
\newcommand{\sst}{\ifmmode{{\mathfrak t}}\else{${\mathfrak t}$\ }\fi}
\def\spinc{Spin$^c$}
\def\di{\partial^{\iota}}
\DeclareMathOperator\iim{Im}
\let\@wraptoccontribs\wraptoccontribs
\title[Rational cuspidal curves]{Involutive Heegaard Floer homology and rational cuspidal curves}
\author{Maciej Borodzik}
\address{Institute of Mathematics, Polish Academy of Science, ul. \'Sniadeckich 8, 00-656  Warsaw, Poland}
\address{Institute of Mathematics, University of Warsaw, ul. Banacha 2,
02-097 Warsaw, Poland}
\email{mcboro@mimuw.edu.pl}
\thanks{The second author was supported by NSF grants DMS-1128155, DMS-1307879, DMS-1552285, and a Sloan Research Fellowship.}
\author{Jennifer Hom}
\address {School of Mathematics, Georgia Institute of Technology, Atlanta, GA 30332}
\email{hom@math.gatech.edu}
\address{Institute of Mathematics, Polish Academy of Science, ul. \'Sniadeckich 8, 00-656  Warsaw, Poland}
\email{schinzel@impan.pl}
\subjclass[2010]{} 
\keywords{rational cuspidal curve, involutive Floer homology, semigroup of singular points} 
\begin{document}
\begin{abstract}
We use invariants of Hendricks and Manolescu coming from involutive Heegaard Floer theory to find constraints on 
possible configurations
of singular points of a rational cuspidal curve of odd degree in the projective plane. We show that the results do not carry over
to rational cuspidal curves of even degree.
\end{abstract}
\maketitle

\section{Introduction}
\subsection{Overview}
Rational cuspidal curves are complex algebraic curves  that are homeomorphic to $S^2$. From an algebraic point
of view a rational cuspidal curve is an algebraic curve of genus zero (that is, it is rational), 
all of whose singular points have one branch (that is, they
are cuspidal). In the article we will discuss only rational cuspidal curves in $\C P^2$. 

Rational cuspidal curves have been an object of interest for a long time. There are many conjectures and open problems on the subject.
The ultimate problem is to classify all rational cuspidal curves, a task with only a few existing partial results, e.g., 
\cite{Bodn,FLMN04,Fen,Fent,FlZa95}.
Other open problems include bounding the maximal number of singular points of a rational cuspidal curve: the
strongest bound of 6 is due to Palka \cite{Pal-fin}, and there is a conjecture of Orevkov that a rational cuspidal curve can have at most 4
singular points; see \cite{Piontkowski} for a precise statement. 
Another problem is the Flenner--Zajdenberg rigidity conjecture; see for instance \cite{FlZa95}, recent advances
related to this conjecture are discussed in \cite{Pal-fin}.

Recently rational cuspidal curves have drawn a renewed attention. On the one hand new algebraic methods have been developed by Koras and
Palka. These methods, based on the minimal model program, have lead to a solution of the Cooligde--Nagata conjecture \cite{KP,Pal}
as well as the Zajdenberg finiteness conjecture \cite{Pal-fin}. (The latter conjecture was reproved by Orevkov in \cite{Or-proof} using \cite{Tono05}.) 
There is a work in progress on giving a full classification, at least
under the rigidity conjecture; see \cite{PP} for details.

Another turning point was the paper of Fernandez de Bobadilla, Luengo, Melle-Hernandez and N\'emethi \cite{FLMN06}, which brought modern methods of
low-dimensional topology into the realm of rational cuspidal curves. 
Namely, based on the study of Seiberg--Witten invariants of links of surface singularities, the authors of \cite{FLMN06} stated
a conjecture on the Alexander polynomials of links of singularities of rational cuspidal curves. 
The solution in \cite{BL} to this conjecture
(the result of \cite{BL} is slightly different than the original conjecture)
revealed further connections between rational cuspidal curves, lattice homology and the Ozsv\'ath--Szab\'o $d$--invariants.
The main result of \cite{BL} was given a new perspective in a paper by Bodn\'ar and N\'emethi \cite{BodN}.
 Another, more precise, conjecture,
based on connections with lattice homology, was also proposed in \cite{BodN}; it remains open.

\subsection{Main results}
In the present paper we follow the approach of \cite{BL}, but we use another method, namely involutive Heegaard Floer theory,
defined by Hendricks and Manolescu in \cite{HM} and based on previous work of Manolescu \cite{Man}. As explained in Section~\ref{sec:differences}, the result we obtain is essentially different than the Bodn\'ar--N\'emethi conjecture \cite{BodN}. 

Before we state the first main result, recall that for a singularity with one branch the $\delta$--invariant is the three--genus
of the link of the singularity. The singular point  has Puiseux sequence $(p;q)$ if and only if the link of singularity is
a torus knot $T(p,q)$. A \emph{regular continued fraction expansion} $\frac{q}{p}=[a_0,\ldots,a_k]$ is a continued fraction expansion
\begin{equation}\label{eq:contfracexp}
\frac{q}{p}=[a_0,a_1,\ldots,a_k]=a_0+\cfrac{1}{a_1+\cfrac{1}{a_2+\ldots+\frac{1}{a_k}}}
\end{equation}
such that 
\begin{itemize}
	\item $a_i$ are integers,
	\item for $i>0$, $a_i$ is positive,
	\item $a_k \geq 2$.
\end{itemize}
With this notation in place, we are ready to state our first result.

\newtheorem*{thm:main2}{Theorem \ref{thm:main2}}
\begin{thm:main2}
Suppose $C$ is a rational cuspidal curve with singular points $z_1,\ldots,z_n$, $n>1$. Assume that $\deg C$ is odd.
Let $\delta_1,\ldots,\delta_n$ be the 
$\delta$--invariants. 
Assume that $z_1$ has Puiseux sequence $(p;q)$, and that $\delta_1\in S_1$, where $S_1$ is the semigroup of $z_1$. Write the regular
continued fraction
$\frac{q}{p}=[a_0,a_1,\ldots,a_k]$ with $a_k > 1$.
Then $\delta_2+\ldots+\delta_n>\intfrac{a_k-1}{2}$.
\end{thm:main2}
\begin{remark}
The assumption that $\deg C$ is odd is easy to overlook. However, none of our main results holds if $\deg C$ is even; see 
Section~\ref{sec:evendegree} for counterexamples.
\end{remark}
The next result is stated in the language of the $V_0$ invariant of Rasmussen. We recall its definition in Section~\ref{sec:V0def} below.

\newtheorem*{thm:withconjecture}{Theorem \ref{thm:withconjecture}}
\begin{thm:withconjecture}
Suppose $C$ is a rational cuspidal curve of odd degree with two singular points $z_1$ and $z_2$. Let $K_1$ and $K_2$
be the corresponding links of the singular points. Then $V_0(K_1\# K_2)=V_0(K_1)+V_0(K_2)$.
\end{thm:withconjecture}

In general, if $K_1$ and $K_2$ are links of cuspidal singularities, then $V_0(K_1\# K_2)\le V_0(K_1)+V_0(K_2)$. An algorithm
for calculating $V_0$ from the semigroup of a singular point is given in Lemma~\ref{lem:v0isg} below; a more general statement
is given in Proposition~\ref{prop:v0isgtwo}. Here we give one important instance.

In Section \ref{sec:Lspacesknotsandtheirsums} (see Definition~\ref{def:oddevenLspace}) we introduce a simple but useful notion of an 
\emph{odd L--space knot} (based on the number of `stairs' in
the staircase complex). An algebraic knot $K$ is odd if and only if 
the $\delta$--invariant (or the three--genus) does not belong to the semigroup of the corresponding singular
point; see Proposition~\ref{prop:whenisodd}. We have the following result.

\newtheorem*{thm:twoodds}{Theorem \ref{thm:twoodds}}
\begin{thm:twoodds}
Suppose $K_1$ and $K_2$ are odd L--space knots. Then $V_0(K_1\# K_2)<V_0(K_1)+V_0(K_2)$. 
\end{thm:twoodds}
In particular, combined with Theorem~\ref{thm:withconjecture}, we obtain the following obstruction to the existence of a rational cuspidal curve of odd degree having two singular points with odd links.

\newtheorem*{thm:sumoftwoodds}{Theorem \ref{thm:sumoftwoodds}}
\begin{thm:sumoftwoodds}
Let $C$ be a rational cuspidal curve of odd degree with two singular points $z_1$ and $z_2$. Let $K_1$ and $K_2$ be links
of singularities of $z_1$ and $z_2$. Then at least one of $K_1$ and $K_2$ is an even L--space knot.
\end{thm:sumoftwoodds}

We illustrate the above application by a simple example. It is a well-known result (see \cite[Section 6.1.3]{Moe08})
but we give the first topological proof.
\begin{example}\label{ex:deg5}
A rational cuspidal curve of degree 5 cannot have two singular points with Puiseux sequences $(2;11)$ and $(2;3)$. It also cannot have
two singular points with Puiseux sequences $(2;7)$ and $(2;7)$.
\end{example}
In \cite[Section 6.1.3]{Moe08} Moe shows 
that a rational cuspidal curve with Puiseux sequences $(2;9)$ and $(2;5)$ actually exists.

This degree 5 example is quite remarkable from the following point of view. In \cite{BodN} Bodn\'ar and N\'emethi noticed that the
criterion of \cite{BL} does not actually restrict singular points, but only so-called multiplicity sequences. We do not give all the details,
but point out that the criterion of \cite{BL} is unable to distinguish the case of singular points $(2;11),(2;3)$ and $(2;9),(2;5)$.
We give more examples in Section~\ref{sec:moreexamples}.

\subsection{Outline of the proof}

The main idea of the proof comes from \cite{BL}, although technical problems already appear at an early stage. 
Consider a rational cuspidal
curve $C\subset\CP^2$ and let $N$ be a tubular neighborhood of $C$. Let $M=\partial N$ and set $W=\CP^2\setminus N$. As in \cite{BL}
we identify $M$ with a surgery on the sum of links of singularities of $C$. Moreover $H_k(W;\Q)=0$ for $k>0$. The latter fact
implies by \cite{OzSz03} that for any \spinc{} structure $\sss$ on $M$ that extends to $W$ we have $d(M,\sss)=0$, where $d$ is the 
Ozsv\'ath--Szab\'o $d$--invariant. The equality $d(M,\sss)=0$ was exploited in \cite{BL}.

In the present article we rely on a result of Hendricks and Manolescu, that for any \emph{Spin structure} $\sss$ on $M$ that
extends over $W$ we have $\ovd(M,\sss)=\und(M,\sss)=0$, where $\ovd$ and $\und$ are the invariants defined in \cite{HM}; see Section \ref{sec:ovd} below. We look
at the canonical Spin structure on $M$, that is, the one corresponding to $m=0$; see  Section~\ref{sec:undandovd} for notation.
The problem is that the canonical Spin structure extends over $W$ if and only if $\deg C$ is odd. Therefore our results are restricted
to curves of odd degree; see Section~\ref{sec:rationalcompl}.

If $C$ has one singular point, then $M$ is an L--space and it follows from \cite[Section 4.4]{HM}
that $\ovd(M,\sss)=\und(M,\sss)=d(M,\sss)$. 
In particular, our result says nothing new for rational cuspidal curves with one singular point.
However, if $C$ has more than one singular point, the condition  $\und(M,\sss)=\ovd(M,\sss)$ becomes restrictive.
The second, and actually, more difficult, 
part of the paper translates the equality $\und(M,\sss)=\ovd(M,\sss)$ into a tractable condition
on semigroups of singular points of $C$.
 
Throughout, we let $\F = \Z/2\Z$.

\begin{ack}
The authors would like to thank J\'ozsi Bodn\'ar, Kristen Hendricks, Adam Levine, Ciprian Manolescu, Andr\'as N\'emethi and Ian Zemke for fruitful discussions and comments.
\end{ack}
%
%
%
%
%
%
\newpage
\section{Involutive Floer homology}\label{sec:hfi}
%
%
%
%
%
%
\subsection{$\und$ and $\ovd$ invariants}\label{sec:ovd}
Let $Y$ be a rational homology sphere and $\sss$ a Spin structure on $Y$. In \cite{HM} Hendricks and Manolescu defined $\und(Y,\sss)$ and $\ovd(Y,\sss)$, which are refinements of the $d$-invariants of Ozsv\'ath and Szab\'o.

Let us quickly recall their construction. We assume that the reader is familiar with Heegaard Floer homology. Suppose $Y$ is a rational homology 3-sphere and $\sss$ is a Spin structure on $Y$.
There is a map $\iota\colon \cf^+(Y,\sss)\to \cf^+(Y,\sss)$, which induces an isomorphism on homology. The square of $\iota$ is  chain homotopic to the identity. 
 Let $Q$ be a formal variable of degree $-1$ with $Q^2=0$. Consider the map
\[ Q(1+\iota) \colon \cf^+ \rightarrow Q \cdot \cf^+[-1], \]
where the brackets denote shifts in grading, i.e. $C[n]_k=C_{k+n}$.
We will be interested in $\cfi^+$, the cone of $Q(1+\iota)$ together with an overall grading shift.
More precisely, let $\cfi^+$ denote the complex with underlying space $\cf^+[-1] \oplus Q \cdot \cf^+[-1]$ and differential
\[ \di= \begin{pmatrix} \partial  & 0 \\ Q(1+\iota) & \partial  \end{pmatrix}. \]


It can be easily shown that the homology $\hfi^+$ splits (non-canonically) as a sum of two towers, $\mathcal{T}^+$ and $Q\mathcal{T}^+$, and the reduced part, which is finitely generated as an $\F$-module.  We will refer to $\mathcal{T}^+$ as the first tower and $Q \mathcal{T}^+$ as the second.
Here $\mathcal{T}^+\cong \F[U,U^{-1}]/U \F[U]$. 
We have the following definition.
\begin{definition}[see \expandafter{\cite[Section 5.1]{HM}}]
The lower and upper involutive correction terms $\und$ and $\ovd$ are given by
\begin{align*}
\und(Y,\sss)&=\min \left\{ r: \exists x\in \hfi^+_r(Y,\sss), x\in \iim(U^n), x\not\in \iim(U^nQ) \textrm{ for }n\gg 0\right\}-1\\
\ovd(Y,\sss)&=\min \left\{ r: \exists y\in \hfi^+_r(Y,\sss), y \neq 0, y\in \iim(U^nQ) \textrm{ for }n\gg0\right\}.
\end{align*}
\end{definition}
From the definition one obtains that $\ovd(Y,\sss)\ge d(Y,\sss)\ge \und(Y,\sss)$ and all three invariants differ by an even integer.
We have the following fundamental property of $\ovd$ and $\und$. The formulation is tailored for the applications in the present article.
\begin{theorem}[\expandafter{\cite[Proposition 5.4]{HM}}]\label{thm:hm-main}
Suppose $(Y,\sss)$ is a rational homology sphere with a Spin structure $\sss$. Assume that $Y=\partial W$, where $W$ is a smooth rational homology ball.
If $\sss$ extends to a Spin structure over $W$, 
then $\ovd(Y,\sss)=\und(Y,\sss)=0$.
\end{theorem}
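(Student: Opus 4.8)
The plan is to promote the classical fact that a rational homology ball filling forces the Ozsv\'ath--Szab\'o correction term to vanish, to its involutive refinement, by making the relevant cobordism maps equivariant with respect to the conjugation involution $\iota$. Since the excerpt already records the ordering $\ovd(Y,\sss)\ge d(Y,\sss)\ge\und(Y,\sss)$, and since the ordinary result of \cite{OzSz03} gives $d(Y,\sss)=0$ whenever $\sss$ extends over a rational homology ball, it suffices to produce enough control on $\ovd$ and $\und$ separately to squeeze them both to $0$.

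First I would set up the cobordisms. Removing an open ball from $W$ produces a cobordism $W_0\colon S^3\to Y$, and removing a ball from the orientation-reversal $\ol{W}$ produces $\ol{W}_0\colon Y\to S^3$. Because $W$ is a rational homology ball, both $W_0$ and $\ol{W}_0$ have vanishing $b_2^{\pm}$, and since $H^2(W;\Q)=0$ the extended Spin structure $\sss_W$ has torsion $c_1$, so $c_1(\sss_W)^2=0$. Feeding this into the grading-shift formula $\tfrac14(c_1^2-2\chi-3\sigma)$ shows that the induced cobordism maps preserve the absolute grading. Moreover, by \cite{OzSz03} the map on $\hf^\infty$ induced by $W_0$ is an isomorphism of $\F[U,U^{-1}]$-modules; this is the source of $d(Y,\sss)=0$.

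The heart of the argument is the involutive upgrade, and here the Spin hypothesis is essential. A Spin structure is self-conjugate, $\ol{\sss_W}=\sss_W$, so the standard conjugation-equivariance of cobordism maps degenerates into a chain homotopy $\iota_Y\circ F^+_{W_0,\sss}\simeq F^+_{W_0,\sss}\circ\iota_{S^3}$. A choice of such a homotopy lets me assemble $F^+_{W_0,\sss}$, together with the homotopy, into a chain map of mapping cones $\cfi^+(S^3)\to\cfi^+(Y,\sss)$, and symmetrically for $\ol{W}_0$. I would then check that these involutive cobordism maps are \emph{local}, i.e.\ isomorphisms after inverting $U$, which is immediate from the isomorphism on $\hf^\infty$ of the previous step together with grading preservation. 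Having local maps in both directions exhibits the $\iota$-complexes of $(Y,\sss)$ and $S^3$ as locally equivalent, and since $\ovd$ and $\und$ depend only on the local equivalence class, they agree with the correction terms of $S^3$. As $\ovd(S^3)=\und(S^3)=0$, the claim follows.

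The main obstacle is the chain-level functoriality used to build the maps of mapping cones. What is genuinely delicate is not the grading bookkeeping but the verification that the conjugation symmetry commutes with the cobordism map up to a \emph{coherent} homotopy, so that the pair (map, homotopy) actually defines a chain map on the cones of $Q(1+\iota)$; this is exactly the point at which one must invoke the naturality of the involution and the self-conjugacy of Spin structures, and it is why the argument would fail for a Spin$^c$ structure with $\sss\ne\ol{\sss}$. A secondary subtlety is confirming that locality of the ordinary map upgrades to locality of the involutive map, so that both $\ovd$ and $\und$ --- and not merely their sandwiched value $d$ --- are pinned to zero.
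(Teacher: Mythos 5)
Your proposal is correct and follows essentially the same route as the proof of \cite[Proposition 5.4]{HM}, which the paper quotes without reproving: since a Spin structure is self-conjugate, the cobordism map of the punctured rational homology ball commutes with $\iota$ up to homotopy and therefore induces a map of the mapping cones defining $\cfi^+$, and running this in both directions (for $W_0$ and $\ol{W}_0$, each with $c_1^2=b_2^\pm=0$, hence grading-preserving and an isomorphism on $\hf^\infty$) forces $\ovd(Y,\sss)$ and $\und(Y,\sss)$ to equal their $S^3$ values, namely $0$. The only cosmetic difference is that Hendricks--Manolescu package the conclusion as a pair of inequalities for negative-semidefinite Spin cobordisms applied in both directions, whereas you phrase it as a local equivalence of $\iota$-complexes; the content is the same.
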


%
%
%
%
%
%
\subsection{$\und$ and $\ovd$ for large surgeries on knots}\label{sec:undandovd}
Knot Floer homology assigns to a knot in $S^3$ doubly filtered chain complexes $\cfk^\circ(K)$ for $\circ\in\{+,-,\infty\}$. (We
do not discuss the hat version in this paper.)
The filtration levels of an element $x\in \cfk^\circ$ are denoted by $\alpha(x)$ and $\beta(x)$ respectively. 


There is a $U$ action on $\cfk^\circ$, which decreases the $\alpha$-- and $\beta$-- filtration levels by $1$ and decreases the homological grading by $2$.
The chain complex $\cfk^\infty$ can be used to calculate the Heegaard Floer homology of surgeries on $K$; see \cite{OzSz04, OSinteger}. Consider
the surgery $S^3_p(K)$ with $p>0$, $p \in \Z$. This manifold has an enumeration of \spinc{} structures by
integers $m\in[-p/2,p/2)$. Denote the corresponding \spinc{} structure by $\sss_m$. 
The \spinc{} structure $\sss_0$ is actually a Spin structure. If $p$ is even, then $\sss_{-p/2}$
is also a Spin structure, but we will focus on the Spin structure $\sss_0$.

Suppose $p\geq 2g(K)-1$  and consider the quotient complex
\[A^+_m:=\cfk^\infty(K)/\cfk^\infty(\alpha<0,\beta<m),\] 
 where $\cfk^\infty(\alpha<0,\beta<m)$ denotes elements whose first filtration level is less than $0$
\emph{and} whose second filtration level is less than $m$. By \cite[Theorem 4.4]{OzSz04} (cf. \cite[Theorem 1.1]{OSinteger} together with \cite[Theorem 1.2]{OSgenus}), $A^+_m$ is, up to an overall grading shift,  chain homotopy equivalent to the complex 
$\cf^+(S^3_p(K),\sss_m)$.
\begin{remark}
If $m=0$, it is enough to assume that $p\ge g(K)$ instead of $p\ge 2g(K)-1$.
\end{remark}

In \cite{HM}, Hendricks and Manolescu defined
a map $\iota\colon\cfk^\infty(K)\to\cfk^\infty(K)$, 
whose square is chain homotopic to the Sarkar map $\varsigma$ \cite{Sar,Zem1}. The map $\iota$ preserves the homological grading, but is skew-filtered;
that is, $\alpha(\iota(x))\le\beta(x)$ and $\beta(\iota(x))\le \alpha(x)$.
In particular, $\iota$ descends to a map $\iota\colon A^+_0\to A^+_0$.
We have the following compatibility relation between the map $\iota$ on $A^+_0$ and $\iota$ on $\cf^+(Y)$ defined above.

\begin{proposition}[see {\cite[Equation (24)]{HM}}]\label{prop:iotaonknots}
Suppose that $K$ is a knot in $S^3$ and $p\ge g(K)$, $p \in \Z$. Consider $Y=S^3_p(K)$ endowed with Spin structure $\sss_0$
and identify the chain complex $\cf^+(Y,\sss_0)$ with the complex $A_0^+$ as above. Then the action of $\iota$
on $A^+_0$ induces the action of $\iota$ on $\cf^+(Y,\sss)$. In particular, the homology of the cone complex 
\[AI^+:=A^+_0\oplus Q\cdot A^+_0\]
with differential $\di=\begin{pmatrix} \partial  & 0 \\ Q(1+\iota) & \partial  \end{pmatrix}$ 
is isomorphic (up to an overall grading shift) to the homology of $\cfi^+(Y,\sss_0)$.
\end{proposition}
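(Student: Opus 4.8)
The plan is to prove the statement in two stages: first, that the large surgery identification intertwines the two incarnations of $\iota$ up to chain homotopy; and second, that this compatibility passes to the mapping cones, giving the desired isomorphism on homology.

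For the first stage, I would start from the Ozsv\'ath--Szab\'o large surgery chain homotopy equivalence $\Phi\colon A^+_0\to\cf^+(Y,\sss_0)$ (up to grading shift), which is available here since $p\ge g(K)$ and $m=0$ by the Remark above. Two involutions must be compared. On $\cf^+(Y,\sss_0)$ the map $\iota$ is the conjugation symmetry, induced by a self-diffeomorphism of a Heegaard diagram that interchanges the roles of the $\alpha$-- and $\beta$--curves and conjugates the \spinc{} structure; since $\sss_0$ is a Spin structure it is fixed by conjugation, so $\iota$ is an endomorphism of $\cf^+(Y,\sss_0)$. On $A^+_0$ the map $\iota$ is descended from the skew-filtered involution $\iota\colon\cfk^\infty(K)\to\cfk^\infty(K)$ characterized by $\alpha(\iota x)\le\beta(x)$ and $\beta(\iota x)\le\alpha(x)$. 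The key point is that both involutions arise from the \emph{same} geometric conjugation symmetry of the doubly--pointed Heegaard diagram for $(S^3,K)$ out of which the surgery complex is built; concretely, one exhibits a chain homotopy between $\Phi\circ\iota$ and $\iota\circ\Phi$. This is the content of \cite[Equation (24)]{HM}, and it is the heart of the matter.

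A subtlety to dispatch along the way is the behavior of $\iota^2$. On $\cfk^\infty$ one only has $\iota^2\simeq\varsigma$, the Sarkar map, whereas the cone $\cfi^+$ is modeled on $\iota^2\simeq\mathrm{id}$. I would therefore check that the descent of $\varsigma$ to $A^+_0$ is chain homotopic to the identity; this is forced by transporting across $\Phi$ the corresponding relation $\iota^2\simeq\mathrm{id}$ on $\cf^+(Y)$, and it is precisely where the step from the knot complex to the surgery complex does its work.

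Finally, for the second stage I would invoke the standard homotopy--invariance of the mapping cone: if $\Phi$ is a chain homotopy equivalence and $\Phi\circ\big(Q(1+\iota)\big)\simeq\big(Q(1+\iota)\big)\circ\Phi$ (which follows since $\Phi$ is a chain map and $\Phi\iota\simeq\iota\Phi$), then the cones of the two copies of $Q(1+\iota)$ are chain homotopy equivalent. Applying this to $f=Q(1+\iota)\colon A^+_0\to Q\cdot A^+_0[-1]$ and its counterpart on $\cf^+(Y,\sss_0)$ yields a homotopy equivalence $AI^+\simeq\cfi^+(Y,\sss_0)$, hence the claimed isomorphism on homology up to the overall grading shift. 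The main obstacle is the first stage---identifying the common geometric source of the two involutions and producing the intertwining homotopy under $\Phi$, together with the bookkeeping that shows $\varsigma$ acts trivially on $A^+_0$; once that is in hand, the cone argument is purely formal.
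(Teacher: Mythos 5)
Your proposal is correct and follows the same route as the paper, which offers no independent proof of this proposition but simply imports it from \cite[Equation (24)]{HM}: the geometric identification of the two involutions under the large surgery equivalence is exactly the content being cited, and the passage to the mapping cones is the standard formal argument you describe. Your additional remarks on the skew-filtered descent to $A^+_0$ and on $\iota^2\simeq\varsigma$ versus $\iota^2\simeq\mathrm{id}$ are sensible bookkeeping but are likewise handled in \cite{HM} rather than in this paper.
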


\subsection{The $V_0$, $\ovV_0$ and $\unV_0$ invariants}\label{sec:V0def}
Recall that for $m\in\Z$
the invariant $V_m(K)$ was defined by the property that $-2V_m(K)$ is the minimal grading of a (non-zero) element in $H_*(A^+_m)$
that is in the image of $U^n$ for all positive $n$. The invariants $V_m$ were first defined by Rasmussen in \cite{Ras}. (He
uses a slightly different invariant $h_i$ with essentially the same meaning.)
The notation we use is that of \cite{NiWu} and we focus on the case $m=0$, that is, on the invariant $V_0(K)$.
It follows from \cite[Proposition 1.6]{NiWu} that if $p \geq g(K)$  
then the $d$-invariant of $p$-surgery on $K$
satisfies 
\begin{equation}\label{eq:v0surg}
d(S^3_p(K),\sss_0)=\frac{p-1}{4}-2V_0(K).
\end{equation}

There is another description of $V_0(K)$.
Consider the set of graded
elements $x_1,\ldots,x_m\in \cfk^\infty(K)$ such that each $x_i$ is a generator of $\hfk^\infty(K)$ at grading $0$.
The invariant $V_0(K)$ is equal to
\begin{equation}\label{eq:v0}
V_0(K)=\min_{j=1,\ldots,m} \max(\alpha(x_j),\beta(x_j)).
\end{equation}
To see that the two definitions are equivalent, notice that an element in $H_*(A^+_0)$ of minimal grading such that it is in the image of $U^n$ for all positive $n$ must be of form $U^kx_j$ for some $k,j$.
On the one hand, 
$U^kx_j=0\in H_*(A^+_0)$ if $k>\max(\alpha(x_j),\beta(x_j))$ and as all the $U^kx_i$ for $i=1,\ldots,m$ are homologous in $A^+_0$, we deduce that
there can be no homologically non-trivial element in $A^+_0$ of the form $U^k x_j$ if $k>\min_j\max(\alpha(x_j),\beta(x_j))$. This shows the `$\le$' 
part of \eqref{eq:v0}. On the other hand, by definition there exists an element in $A_0^+$ at grading $-2V_0(K)$ that is homologically non-trivial
and that is in the image of $U^n$ for any $n$. In particular such an element must be of form $U^kx_j$ for some $k$ and $j$. Looking at
the gradings implies that $k=V_0(K)$ and as $U^kx_j\neq 0\in H_*(A_0^+)$ we infer that $V_0(K) \ge \max(\alpha(x_j),\beta(x_j))$.

Given Proposition~\ref{prop:iotaonknots} one introduces invariants $\ovV_0$ and $\unV_0$ defined as follows. Consider
the complex $AI^+$ as above. Define
\begin{equation}\label{eq:ovddef}
\begin{split}
\unV_0&=\max \{ r : \exists x\in H_{-2r}(AI^+),x\in \iim U^n, x\notin \iim(U^nQ)\textrm{ for }n\gg 0\}.\\
\ovV_0&=\max \{ r : \exists y\in H_{-2r-1}(AI^+), y \neq 0, y\in \iim (U^nQ) \textrm{ for }n\gg 0\}.
\end{split}
\end{equation}
We have the following result.
\begin{theorem}[see \expandafter{\cite[Theorem 1.6]{HM}}]
Suppose $p\ge g(K)$. Then $\ovd$ and $\und$ of surgeries on a knot $K$ are related to $\ovV_0$ and $\unV_0$ by the following
formula:
\begin{equation}\label{eq:ovd}
\begin{split}
\ovd(S^3_p(K),\sss_0)&=\frac{p-1}{4}-2\ovV_0(K)\\
\und(S^3_p(K),\sss_0)&=\frac{p-1}{4}-2\unV_0(K).
\end{split}
\end{equation}
\end{theorem}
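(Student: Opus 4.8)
The plan is to read off both formulas from Proposition~\ref{prop:iotaonknots} together with the already-established non-involutive relation \eqref{eq:v0surg}, the only genuine work being the bookkeeping of the overall grading shift. By Proposition~\ref{prop:iotaonknots} there is a constant $c=c(p,K)$ and an isomorphism $H_j(AI^+)\xrightarrow{\cong}\hfi^+_{j+c}(S^3_p(K),\sss_0)$ intertwining both the $U$--action and the $Q$--action. Consequently every condition occurring in the definitions of $\und,\ovd$ and of $\unV_0,\ovV_0$ — membership in $\iim U^n$, non-membership in $\iim U^nQ$, membership in $\iim U^nQ$, and non-vanishing — is preserved under the identification, so the elements witnessing $\und$ correspond exactly to those witnessing $\unV_0$, and likewise for $\ovd$ and $\ovV_0$.

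First I would translate the two definitions into grading statements. Since the condition $\iim U^n\setminus\iim U^nQ$ is preserved under $U^{-1}$ on the first tower, the quantity $-2\unV_0$ is precisely the minimal $AI^+$--grading carrying an element of $\iim U^n\setminus\iim U^nQ$ for $n\gg0$; under the isomorphism this minimal grading becomes $-2\unV_0+c$ in $\hfi^+$, which by definition equals $\und(S^3_p(K),\sss_0)+1$. Hence $\und(S^3_p(K),\sss_0)=c-1-2\unV_0(K)$. Similarly $-2\ovV_0-1$ is the minimal $AI^+$--grading carrying a nonzero element of $\iim U^nQ$, which becomes $-2\ovV_0-1+c$ in $\hfi^+$ and equals $\ovd(S^3_p(K),\sss_0)$; hence $\ovd(S^3_p(K),\sss_0)=c-1-2\ovV_0(K)$.

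It remains to determine $c$, which is the only computation. Here I would use \eqref{eq:v0surg}. The isomorphism of Proposition~\ref{prop:iotaonknots} sends the non-$Q$ summand $A_0^+\subset AI^+$ onto the first tower of $\hfi^+$, on which it refines the large-surgery identification of $\cf^+(S^3_p(K),\sss_0)$ with $A_0^+$ from \cite[Theorem 4.4]{OzSz04}, composed with the degree shift $[-1]$ built into the cone $\cfi^+$. Comparing the bottoms of the $U$--towers on the two sides, $-2V_0(K)$ is the minimal grading of a tower element of $H_*(A_0^+)$ and $d(S^3_p(K),\sss_0)$ that of $\hf^+(S^3_p(K),\sss_0)$; by \eqref{eq:v0surg} their difference $\tfrac{p-1}{4}$ is the grading shift between $A_0^+$ and $\cf^+(S^3_p(K),\sss_0)$. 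Adding the $+1$ coming from the $[-1]$ shift gives $c=\tfrac{p-1}{4}+1$, and substituting into the two identities above yields the claimed formulas.

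The main obstacle is exactly this grading bookkeeping. Proposition~\ref{prop:iotaonknots} only provides the identification ``up to an overall grading shift'', while the definitions of $\unV_0$ and $\ovV_0$ are deliberately set with different grading conventions ($-2r$ versus $-2r-1$) and the definition of $\und$ carries an extra $-1$ absent from that of $\ovd$. One must check that a \emph{single} shift $c$ is simultaneously consistent with both translated identities. This does work out, because in $H_*(AI^+)$ the first-tower elements come from the summand $A_0^+$ and hence lie in even gradings (the $U$--tower of $H_*(A_0^+)$ starts at $-2V_0$), whereas the second-tower elements come from $Q\cdot A_0^+$ and, since $Q$ has degree $-1$, lie in odd gradings; the parity discrepancy between the two definitions is precisely absorbed by this even/odd splitting. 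Verifying this compatibility, rather than supplying any new geometric input, is where the care lies.
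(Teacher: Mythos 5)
The paper offers no proof of this statement: it is quoted from \cite[Theorem 1.6]{HM}, and the remark immediately following it explains that in \cite{HM} the formulas \eqref{eq:ovd} are the \emph{definition} of $\unV_0$ and $\ovV_0$, with \eqref{eq:ovddef} being the derived reformulation from \cite[Section 6.7]{HM}. Your argument runs the equivalence in the opposite direction — taking \eqref{eq:ovddef} as the definition and deducing \eqref{eq:ovd} — which is the right direction for this paper's exposition, and it is correct. The strategy (transport the tower conditions through the $U$- and $Q$-equivariant identification of Proposition~\ref{prop:iotaonknots}, reduce everything to locating minimal gradings, and pin down the single overall shift $c$ by comparing the non-involutive bottoms of the towers via \eqref{eq:v0surg}) is exactly the content that makes the two formulations agree, and your arithmetic checks out: with $\cf^+[-1]_k=\cf^+_{k-1}$ the shift is $c=\frac{p-1}{4}+1$, the extra $-1$ in the definition of $\und$ cancels it against $\unV_0$, and the $-2r$ versus $-2r-1$ conventions are absorbed by the even/odd placement of the two towers, as you note. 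Two small imprecisions worth fixing: $A_0^+\oplus 0$ is not a subcomplex of $AI^+$ (it is a quotient complex; $Q\cdot A_0^+$ is the subcomplex), so the first tower is not literally the image of that summand — rather its generators are classes $[x+Qy]$ projecting to tower generators $[x]$ of $H_*(A_0^+)$, which carry the same grading and hence give the same bookkeeping; and one should say explicitly that the first-tower elements of $H_*(AI^+)$ lie in even gradings \emph{because} they project to the tower of $H_*(A_0^+)$, whose bottom is at $-2V_0(K)$, so that the minimum in your translation of the definition of $\unV_0$ is genuinely attained at a grading of the form $-2r$.
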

\begin{remark}
The original definition of $\unV_0$ and $\ovV_0$ in \cite{HM} is via \eqref{eq:ovd}. However in \cite[Section 6.7]{HM}, the
invariants $\ovV_0$ and $\unV_0$ are determined by calculating minimal gradings of elements generating
the two towers. Equation~\eqref{eq:ovddef} above is a reformulation of their definition.
\end{remark}

By \cite[Proposition 5.1]{HM}, we have
\begin{equation}\label{eq:ovvunv}
\ovV_0(K)\le V_0(K)\le\unV_0(K).
\end{equation}

%
%
%
%
%
%
\subsection{Involutively simple knots}
We focus on knots for which $\unV_0(K)=\ovV_0(K)$. Since this condition will be used quite frequently, we give it a name.
\begin{definition}
A knot $K\subset S^3$ is called \emph{involutively simple} if $\unV_0(K)=\ovV_0(K)$.
\end{definition}
\begin{example}
As was shown in \cite[Section 7]{HM}, L--space knots are involutively simple; however mirrors of non-trivial L--space knots
are not. (See Section \ref{sec:Lspaceknots} for the definition of L--space knots.)
\end{example}

Let $K$ be a knot in $S^3$ and denote by $x_1,\ldots,x_n$ all possible elements of $\cfk^\infty(K)$ at grading $0$ which generate
$\hfk^\infty(K)$.
\begin{proposition}\label{prop:C1C2}

The invariant $\unV_0(K)$ can be equal to $V_0(K)$ in precisely two cases:
\begin{itemize}
\item[(C1)] There exists a generator $x_i$ at grading $0$  which is fixed by $\iota$ and minimizes $\max(\alpha(x_j),\beta(x_j))$ in \eqref{eq:v0}.
\item[(C2)] There exists a generator $x_i$ at grading $0$ minimizing $\max(\alpha(x_j),\beta(x_j))$ such that there exists $y_i$
with $\partial y_i=(1+\iota)x_i$ and $\max(\alpha(x_i),\beta(x_i)) \ge \max(\alpha(y_i),\beta(y_i))$.
\end{itemize}
\end{proposition}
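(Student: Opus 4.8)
The plan is to work directly in the mapping cone $AI^+ = A^+_0 \oplus Q\cdot A^+_0$ and to locate the bottom of its first tower, since by \eqref{eq:ovddef} the quantity $\unV_0$ is exactly (half of the negative of) the lowest grading at which $H_*(AI^+)$ carries a $U$-nontorsion class with nonzero first-tower component. Recall that $V_0$ is the grading $-2V_0$ of the bottom of the tower $\mathcal{T}^+ \subset H_*(A^+_0)$, which by \eqref{eq:v0} is represented by $U^{V_0}x_i$ for any generator $x_i$ minimizing $\max(\alpha(x_j),\beta(x_j))$, and that by \eqref{eq:ovvunv} we already know $\unV_0 \ge V_0$. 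Thus the entire content is to decide when the first tower of $AI^+$ has its bottom at grading $-2V_0$ rather than lower. First I would reduce to the grading-$0$ picture: a class realizing the first tower is $U^{V_0}$ times the class of a cycle $(x_i, Qy_i)$, where $x_i$ is a minimizing generator and the second coordinate $y_i \in A^+_0$ satisfies $\partial y_i = (1+\iota)x_i$.

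Such a lift always exists: the projection $AI^+ \to A^+_0$ is a chain map, $\iota$ preserves the homological grading and is a homotopy equivalence, so $\iota_*$ is the identity on the tower $\mathcal{T}^+$ (one-dimensional in each grading); hence $(1+\iota)_*[x_i]=0$ and $(1+\iota)x_i$ is a boundary $\partial y_i$. The cycle condition for $(x_i, Qy_i)$ then holds, since $(1+\iota)x_i + \partial y_i = 0$ over $\F$. I would next compute the first-tower bottom by determining the largest $k$ for which $U^k(x_i, Qy_i) = (U^kx_i, QU^ky_i)$ is nonzero in $H_*(AI^+)$. For $k > V_0$ the first coordinate $U^kx_i$ already vanishes in $A^+_0$, and (using that $\iota$ is skew-filtered, so $(1+\iota)x_i$ has filtration at most $V_0$) the surviving element $(0, QU^ky_i)$ is a cycle whose vanishing is governed by $\max(\alpha(y_i),\beta(y_i))$. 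This is the step that produces the dichotomy: if $\max(\alpha(y_i),\beta(y_i)) \le \max(\alpha(x_i),\beta(x_i)) = V_0$, then $U^{V_0+1}(x_i,Qy_i)=0$, the first-tower bottom sits at grading $-2V_0$, and $\unV_0 = V_0$. The case $y_i = 0$, available exactly when $\iota$ fixes $x_i$, is (C1); the case $y_i \neq 0$ with the filtration bound is (C2).

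For the converse I would argue that if neither (C1) nor (C2) holds, then for every minimizing $x_i$ every admissible $y_i$ satisfies $\max(\alpha(y_i),\beta(y_i)) > V_0$, so that $(0, QU^ky_i)$ is a nonzero $U$-nontorsion class lying strictly below grading $-2V_0$ in the first tower, forcing $\unV_0 > V_0$; conversely, $\unV_0 = V_0$ lets one realize the first-tower bottom at $-2V_0$, and running the previous step backwards extracts $x_i$ and $y_i$ witnessing (C1) or (C2). The main obstacle is precisely this converse bookkeeping: I must show that the only two ways a lift can keep the first-tower bottom at $-2V_0$ are the stated ones, which requires controlling the classes $[U^ky_i]$ for $V_0 < k \le \max(\alpha(y_i),\beta(y_i))$ and, when these happen to lie in $\iim(1+\iota)_*$, replacing $y_i$ by a filtration-reduced representative without breaking the cycle condition. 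Verifying that this reduction can always be carried out, so that the two cases are genuinely exhaustive and mutually exclusive, is the technical heart of the argument.
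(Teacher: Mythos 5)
Your proposal follows essentially the same route as the paper: work in the cone $AI^+$, lift a minimizing generator $x_i$ to a cycle $x_i+Qy_i$ with $\partial y_i=(1+\iota)x_i$, identify $\unV_0$ with the joint filtration level of such lifts, and split into cases according to whether $y_i$ vanishes. The forward implication is fine.

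The step you flag as the ``technical heart'' --- exhaustiveness of (C1)/(C2) --- is, however, genuinely left open in your write-up, and the paper closes it with a much shorter observation than the representative-reduction scheme you sketch. Fix once and for all, for each $i$, a $y_i$ chosen so that $\max\bigl(\alpha(x_i+Qy_i),\beta(x_i+Qy_i)\bigr)$ is minimal; then $\unV_0=\min_i\max\bigl(\alpha(x_i+Qy_i),\beta(x_i+Qy_i)\bigr)$, and since $\alpha(x_i+Qy_i)\ge\alpha(x_i)$ and $\beta(x_i+Qy_i)\ge\beta(x_i)$ for every $i$, the equality $\unV_0=V_0$ forces a single index $i$ minimizing both sides simultaneously; for that index $\max(\alpha(y_i),\beta(y_i))\le\max(\alpha(x_i),\beta(x_i))$ follows immediately ($Q$ preserves filtration), and the dichotomy $y_i=0$ versus $y_i\neq 0$ gives (C1) versus (C2). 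No control of the classes $[U^ky_i]$ for $k>V_0$ is needed. Separately, your converse mechanism is misstated: for $k>V_0$ the element $U^k(x_i+Qy_i)=QU^ky_i$ lies in the image of $Q$ and projects to zero in $H_*(A^+_0)$, so it cannot witness the \emph{first} tower; the correct statement is that when every admissible $y_i$ has $\max(\alpha(y_i),\beta(y_i))>V_0$, the cycle $x_i+Qy_i$ itself has filtration level exceeding $V_0$, which by the displayed formula for $\unV_0$ gives $\unV_0>V_0$ directly.
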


\begin{remark}
Since $\iota$ is skew-filtered, the requirement in (C1) that $x_i$ is fixed by $\iota$ implies that $\alpha(x_i)=\beta(x_i)$.
\end{remark}

\begin{proof}[Proof of Proposition \ref{prop:C1C2}]
Recall that $AI^+$ is the cone of $Q(1+\iota)$. As $H_r(A^+_0)=\F$ for large even $r$, the homology exact triangle
for the cone implies that
generators of the first tower are of the form $U^{\delta_i}(x_i+Qy_i)$ for some $i=1,\ldots,n$, some integer  $\delta_i$
and a graded element $y_i\in\cfk^\infty(K)$. The element $y_i$ is only determined up to adding a boundary. We (partially)
fix this indeterminacy by chosing a representative $y_i$ in such a way that $\max(\alpha(x_i+Qy_i)),\beta(x_i+Qy_i))$ is minimal. More specific determination of $y_i$ will not be needed in the proof.

The fact that $x_i+Qy_i$ is a cycle translates into
\[ \di(x_i+Qy_i)=Q(1+\iota)x_i+Q\partial y_i=0,\] 
so $(1+\iota) x_i=\partial y_i$.
 
A consequence of \eqref{eq:ovddef} is that $\unV_0=\min_i\max(\alpha(x_i+Qy_i),\beta(x_i+Qy_i))$.  Combined with \eqref{eq:v0},
the assumption that $\unV_0=V_0$ implies that
\begin{equation}\label{eq:unv0v0}
\min_i\max(\alpha(x_i+Qy_i),\beta(x_i+Qy_i))=\min_i\max(\alpha(x_i),\beta(x_i)).
\end{equation}
Clearly $\alpha(x_i+Qy_i)\ge\alpha(x_i)$ and $\beta(x_i+Qy_i)\ge\beta(x_i)$. Therefore there exists an index $i$ which simultaneously  minimizes both the left and right hand sides.
 For this
index we have 
\begin{itemize}
	\item $(1+\iota) x_i=\partial y_i$;
	\item $x_i+Qy_i$ minimizes the left hand side of \eqref{eq:unv0v0}. 
\end{itemize}
It follows that
\[\max(\alpha(Qy_i),\beta(Qy_i))\le \max(\alpha(x_i),\beta(x_i)).\]

By the definition, $Q$ preserves the filtration levels, hence from the above equation we obtain
$\max(\alpha(y_i),\beta(y_i))\le \max(\alpha(x_i),\beta(x_i))$. If $y_i\neq 0$,
we are in case (C2). If $y_i=0$, we have $(1+\iota)x_i=0$; that is, $x_i=\iota x_i$, which is case (C1).
\end{proof}
\begin{remark}
The case (C1) is in fact a special subcase of (C2). However in the applications later on, it will be convenient to distinguish
between the two.
\end{remark}

A consequence of Proposition~\ref{prop:C1C2} is that sometimes we are able to show that a knot is not involutively simple 
merely by looking at the filtration levels of the generators of $CFK^\infty$.
A detailed discussion of the case of connected sums of L--space knots is given in the next section.

%
%
%
%
%
%

\section{L--space knots and their sums}\label{sec:Lspacesknotsandtheirsums}
\subsection{Singular points, links and semigroups}\label{sec:semigroups}

Let $z\in C$ be a singular point of an algebraic curve. We consider curves in $\CP^2$, but as our analysis in this section is local, we
can assume that $z\in\C^2$ and $C$ is a plane algebraic curve. The \emph{link of singularity} is defined as $L=S^3_z\cap C\subset S^3_z$,
where $S^3_z$ is a small sphere around $z$. The singularity is called \emph{cuspidal} if $L$ has one component. This is equivalent to
saying that the intersection of $C$ with a small ball around $z$ is homeomorphic to a disk.

With each cuspidal singular point we can associate a sequence of positive integers $(p;q_1,\ldots,q_n)$, which is called a Puiseux sequence. 
A singular point with Puiseux sequence $(p;q_1,\ldots,q_n)$ is topologically equivalent to a singular point 
parametrized locally by $\C\to\C^2$, $t\mapsto (t^p,t^{q_1}+\ldots+t^{q_n})$, and topological equivalence means that the correspondig
links of singular points are isotopic. We refer to \cite{EN,Wa}
for more details.

To a singular point $z$ we can associate a numerical semigroup $S(z)$; see \cite[Chapter 4]{Wa}. The semigroup is the set of all non-negative
numbers that can be realized as local intersection indices of $C$ and some other complex curve $D$ not containing $C$. For example, for a singularity
with a Puiseux sequence $(p;q)$, the semigroup
is generated by $p$ and $q$. By convention, zero is always an element of the semigroup.

If $S(z)$ is the semigroup of a singular point, the \emph{gap set} $G=\Z_{\ge 0}\setminus S(z)$ is a finite set. Its cardinality is the 
$\delta$--invariant of $z$, which is equal to half the Milnor number and also to the three--genus of the link of singularity. Moreover
the expression
\[1+(t-1)\sum_{j\in G}t^j\]
is equal to the Alexander polynomial of the link of the singularity; see \cite[Chapter 4]{Wa}.

\subsection{L--space knots}\label{sec:Lspaceknots}
Recall that a rational homology sphere $Y$ is an \emph{L--space} if $\operatorname{rk} \widehat{\hf}(Y) = |H_1(Y; \Z)|$ and that a knot $K \subset S^3$ is an \emph{L--space knot} if it admits a positive L--space surgery; see \cite{os:lspace}. 
For us the main source of examples comes from the following result of Hedden \cite{Hed}.
\begin{theorem}[\cite{Hed}]
  A link of a cuspidal singularity is an L--space knot.
\end{theorem}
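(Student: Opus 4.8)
The plan is to realize the link of a cuspidal singularity as an iterated torus knot and to prove the statement by induction on the number of cabling steps, with torus knots as the base case and a cabling criterion for L--space knots as the inductive engine. First I would recall the classical description of algebraic links (see \cite{EN,Wa}). Since a cuspidal singularity has a single branch, its link $L$ is a knot in $S^3$, and its Puiseux sequence $(p;q_1,\dots,q_n)$ translates, via the Newton pairs, into a presentation of $L$ as an iterated torus knot. Concretely there is a sequence of knots $K_0=(\textrm{unknot}),\ K_1=T(a_1,b_1),\ K_2,\dots,\ K_n=L$, in which each $K_{i+1}$ is obtained from $K_i$ by a cabling operation with positive parameters $(a_{i+1},b_{i+1})$ that are explicitly determined by the Newton pairs. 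In particular $L$ is a positive iterated torus knot.

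The base case is that torus knots are L--space knots: by Moser's classification of surgeries on torus knots, $T(p,q)$ admits lens space surgeries, and lens spaces are L--spaces. The inductive engine is the cabling criterion of Hedden, sharpened by Hom: for coprime positive integers $m,n$ with $m\ge 2$, the $(m,n)$--cable $K_{m,n}$ of a knot $K$ is an L--space knot if and only if $K$ is itself an L--space knot and the cabling slope satisfies $n\ge m\,(2g(K)-1)$, where $g(K)$ is the Seifert genus. Thus, to run the induction, I would verify at each cabling step passing from $K_i$ to $K_{i+1}$ that the algebraic cabling data satisfy $b_{i+1}\ge a_{i+1}\,(2g(K_i)-1)$; once this holds, Hedden's criterion upgrades the statement ``$K_i$ is an L--space knot'' to ``$K_{i+1}$ is an L--space knot,'' and induction from the torus--knot base case yields that $L=K_n$ is an L--space knot.

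I expect the verification of this last inequality to be the main obstacle, and it is purely a matter of the combinatorics of algebraic links. The genus $g(K_i)$ is the $\delta$--invariant of the corresponding intermediate singularity and is computed recursively from the Newton pairs; the cabling slope $b_{i+1}/a_{i+1}$ is likewise determined by the Newton pairs, but with a correction term coming from the change of framing between successive cables (the cable is taken with respect to a framing that differs from the Seifert framing of $K_i$). The precise content of ``$L$ is algebraic'' is exactly a positivity and monotonicity constraint on the Newton pairs, and the technical heart of the argument is to substitute the recursive genus formula together with the framing--corrected slope into $b_{i+1}\ge a_{i+1}\,(2g(K_i)-1)$ and reduce it to this standard positivity. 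I would handle this step by induction in tandem with the L--space property, carrying along the genus recursion so that the inequality at stage $i+1$ follows from the Newton data and the value of $g(K_i)$ produced at stage $i$. An alternative, more computational route would bypass cabling entirely and instead identify $\widehat{\hfk}(L)$ directly: one expresses the Alexander polynomial of $L$ through the gap set $G$ of the semigroup as in Section~\ref{sec:semigroups}, checks that all its nonzero coefficients are $\pm 1$ with the alternating pattern characteristic of L--space knots, and then confirms that the associated staircase is realized, but I regard the cabling induction as the cleaner and more robust argument.
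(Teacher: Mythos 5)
Your proposal is correct and is essentially the argument of the cited reference \cite{Hed}: the paper itself gives no proof, and Hedden's proof proceeds exactly by presenting the algebraic knot as a positive iterated torus knot and applying his cabling criterion inductively, with the required slope inequality following from the standard positivity constraints on the Newton pairs of an algebraic link. (Only the sufficiency direction of the cabling criterion, due to Hedden, is needed; Hom's sharpening to an equivalence is not required here.)
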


The $\cfk^\infty$ complex for an L--space knot has a particularly simple form. It is usually described in terms of staircase complexes.
A \emph{staircase complex} is a bifiltered complex with generators $x_0,\ldots,x_n$, $y_0,\ldots, y_{n-1}$ and boundary operator $\partial y_j=x_j+x_{j+1}$. The filtration levels, still denoted by $\alpha$ and $\beta$, are such that $\beta(x_i)=\beta(y_i)>\beta(x_{i+1})$ and 
$\alpha(x_i)<\alpha(y_i)=\alpha(x_{i+1})$. Therefore, the filtration levels of $y_0,\ldots,y_{n-1}$ are determined by
the filtration levels of $x_0,\ldots,x_n$. The grading of the generators is such that the $x_j$ have grading $0$ and the $y_j$
have grading $1$. We denote the staircase complex by $St(x_0,x_1,\ldots,x_n)$. 

For an L--space knot $K$, there is an associated staircase $St(K)$ with the property that $St(K)\otimes\F[U,U^{-1}]$
is bifiltered chain homotopy equivalent to $\cfk^\infty(K)$.
The complex $St(K)$ can be calculated from the Alexander polynomial $\Delta_K$. There are various accounts for this fact; see
for instance \cite{Pet}. We present a short description of the procedure using the language of semigroups.

Write the Alexander polynomial of $K$ as 
\begin{equation}\label{eq:deltak}
\Delta_K(t)=t^{a_0}-t^{a_1}+t^{a_2}-\ldots+t^{a_{2m}},
\end{equation}  where $0=a_0<a_1<\ldots<a_{2m}=2g(K)$. 
The last formula can be rewritten in the following way:
\[\Delta_K(t)=1+(t-1)(t^{g_1}+\ldots+t^{g_r}),\]
for some integers $0<g_1<\ldots<g_r$. Define the \emph{gap set} of $K$ to be $G=\{g_1,\ldots,g_r\}$. Let 
\begin{equation}\label{eq:defS}
S(K)=\{x\in\Z, x\ge 0,x\notin G\}.
\end{equation}
If $K$ is an algebraic knot, then in Section~\ref{sec:semigroups} we saw that $S(K)$ is the semigroup of the corresponding 
singular point. For general L--space
knots, $S(K)$ will not necessarily have a semigroup structure; see, for example, \cite[Example 2.3]{BCG}.

We pass to the construction of the staircase complex. For an exponent $a_j$ of the Alexander polynomial place a generator at bifiltration level
\[(\#S(K)\cap[0,a_j),\#(\Z\setminus S(K))\cap[a_j,\infty)).\] 
Call this element $x_k$ if $j=2k$ and $y_{k}$ if $j=2k+1$ for some $k\in\Z$.
Notice that the elements $a_j$ have the following property: if $j$ is odd, then $a_j\notin S(K)$, but $a_j-1\in S(K)$; conversely, if $j$ is
even, then $a_j\in S(K)$, but $a_j-1\notin S(K)$. It follows that
$\beta(x_i)=\beta(y_i)$ and $\alpha(y_i)=\alpha(x_{i+1})$. The staircase $St(x_0,\ldots,x_n)$ constructed
in this way is the staircase $St$ of the L--space knot $K$. Throughout the paper we refer to $x_0,x_1,\ldots,x_n$
as the $x$--type generators and $y_0,\ldots,y_{n-1}$ as the $y$--type generators of the staircase $St$.

\begin{example}\label{ex:staircase56}
Consider the singularity of a complex curve in $\C^2$ given $x^5-y^6=0$, which has Puiseux sequence $(5;6)$. The link of
the singular point is the torus knot $T(5,6)$ and the semigroup is generated by $5$ and $6$. The Alexander polynomial is
$1-t+t^5-t^7+t^{10}-t^{13}+t^{15}-t^{19}+t^{20}$, so $a_0=0$, $a_1=1$, $a_2=5$, $a_3=7$, $a_4=10$, $a_5=13$, $a_6=15$, $a_7=19$ and $a_8=20$. The bifiltration
levels of the generators are, respectively $(0,10)$, $(1,10)$, $(1,6)$, $(3,6)$,
$(3,3)$, $(6,3)$, $(6,1)$, $(10,1)$ and $(10,0)$. The generators
at bifiltration level $(0,10)$, $(1,6)$, $(3,3)$, $(6,1)$, $(10,0)$ are
$x$--type generators, while the others are $y$--type generators. 
See Figure~\ref{fig:T56}.
\end{example}
\begin{figure}
\begin{tikzpicture}[scale=0.5]
\draw[dotted,color=black!60](-1,-1) grid (10,10);
\draw[->] (0,-1)--(0,11);
\draw[->] (-1,0)--(11,0);
\draw[fill=black] (10,0) circle (0.1);
\draw[fill=black,thick] (10,0) -- (10,1) circle (0.1);
\draw[fill=black,thick] (10,1) -- (06,1) circle (0.1);
\draw[fill=black,thick] (06,1) -- (06,3) circle (0.1);
\draw[fill=black,thick] (03,6) -- (01,6) circle (0.1);
\draw[fill=black,thick] (01,6) -- (1,10) circle (0.1);
\draw[fill=black,thick] (1,10) -- (0,10) circle (0.1);
\draw[white!65!black,ultra thick] (6,3) -- (3,3) -- (3,6);
\fill[black] (6,3) circle (0.1);
\fill[black] (3,3) circle (0.1);
\fill[black] (3,6) circle (0.1);
\end{tikzpicture}
\caption{The staircase complex for the torus knot $T(5,6)$. The length of the grey middle segments is the stretch, which in this case is $3$.}\label{fig:T56}
\end{figure}
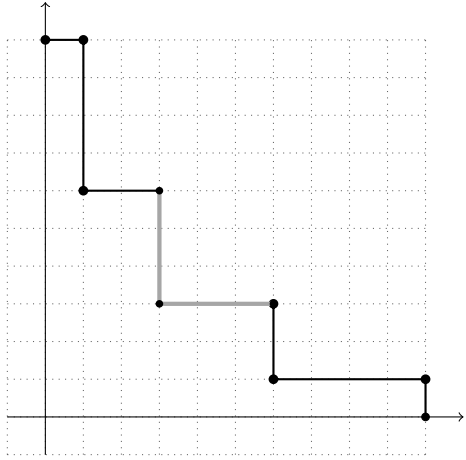
There is a geometric way of describing the staircase as a broken line made of segments joining
$(\#S(K)\cap[0,m),\#(\Z\setminus S(K))\cap[m,\infty))$ to $(\#S(K)\cap[0,m+1),\#(\Z\setminus S(K))\cap[m+1,\infty))$ for $m\in\Z$. The corners of
the line correspond to generators of the staircase; more precisely, the `L'--shaped corners correspond to $x$--type generators
of the staircase and the `7'--shaped corners correspond to $y$--type generators of the staircase. The horizontal and vertical lines indicate
the differentials in the chain complex.

We will also need the following standard fact:
\begin{lemma}\label{lem:v0isg}
The $V_0$ invariant of an L--space knot $K$ is equal to $\# S(K)\cap[0,g)$, where $g$ is the three--genus of $K$.
\end{lemma}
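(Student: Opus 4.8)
The plan is to use the description \eqref{eq:v0} of $V_0$ together with the explicit staircase $St(K)$. For an L--space knot the only generators of $\hfk^\infty(K)$ in grading $0$ are the $x$--type generators $x_0,\dots,x_n$ of $St(K)$ (the elements $U^k y_j$ sit in odd gradings, so they never land in grading $0$), and by the construction recalled above the $x$--type generator attached to the even exponent $a_{2k}$ sits at bifiltration level $(\alpha,\beta)=(\#S(K)\cap[0,a_{2k}),\ \#(\Z\setminus S(K))\cap[a_{2k},\infty))$. Thus \eqref{eq:v0} becomes a minimization of $\max(\alpha,\beta)$ over these corner points. Writing $f(m)=\#S(K)\cap[0,m)$, the elementary count $\#(\Z\setminus S(K))\cap[0,m)=m-f(m)$ together with $|\Z\setminus S(K)|=g$ gives $\#(\Z\setminus S(K))\cap[m,\infty)=g-m+f(m)$, so each corner at level $m=a_{2k}$ contributes $\max\bigl(f(m),\,g-m+f(m)\bigr)$.

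First I would analyze this quantity as a function of $m$. Since $f$ is non--decreasing and $m\mapsto g-m+f(m)$ is non--increasing (it drops by one exactly when $m\notin S(K)$), and since $f(m)\le g-m+f(m)$ precisely when $m\le g$, the expression $\max\bigl(f(m),\,g-m+f(m)\bigr)$ is minimized at $m=g$, where both entries equal $f(g)=\#S(K)\cap[0,g)$. As the $x$--type corners occur at a subset of the integers $m\in[0,2g]$, this already yields $V_0(K)\ge\#S(K)\cap[0,g)$, with equality exactly when some $x$--type corner realizes the value $f(g)$.

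The main obstacle is that the minimizing lattice point $m=g$ need not itself be an $x$--type corner. To handle this I would invoke the symmetry of the semigroup of an L--space knot (equivalently, the symmetry $\Delta_K(t)=t^{2g}\Delta_K(t^{-1})$, or the conjugation symmetry of $\cfk^\infty$), which gives $s\in S(K)\Leftrightarrow 2g-1-s\notin S(K)$ for $0\le s\le 2g-1$; taking $s=g$ shows that exactly one of $g-1$ and $g$ lies in $S(K)$. If $g\in S(K)$ then $g-1\notin S(K)$, so $m=g$ is itself an $x$--type corner (a descent followed by a rightward step) and realizes $f(g)$. If instead $g-1\in S(K)$ and $g\notin S(K)$, let $m^\ast$ be the start of the maximal run of consecutive semigroup elements ending at $g-1$; then $m^\ast\in S(K)$ while $m^\ast-1\notin S(K)$, so $m^\ast$ is an $x$--type corner, and since there are no gaps in $[m^\ast,g-1]$ we get $g-m^\ast+f(m^\ast)=g-(g-1)+f(g-1)=f(g)$ while $f(m^\ast)\le f(g-1)<f(g)$, so this corner again realizes $f(g)$. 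In both cases the bound is attained, giving $V_0(K)=\#S(K)\cap[0,g)$. The only genuinely delicate input is the symmetry of the semigroup (note that the two cases are precisely the even and odd L--space knots of the next section); the remainder is bookkeeping with the two monotone filtration functions.
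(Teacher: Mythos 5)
Your proof is correct, but it is not the route the paper actually takes: the paper's written proof is a two-line citation, identifying $V_0(K)$ with the function $J(0)$ of Borodzik--Livingston and invoking \cite[Lemma 6.2]{BL} to get $J(0)=\#S(K)\cap[0,g)$. The paper then remarks that ``there is also a direct proof, which uses \eqref{eq:v0} and the construction of the staircase'' and leaves it as an exercise --- your argument is precisely that exercise, carried out in full. The bookkeeping checks out: with $f(m)=\#S(K)\cap[0,m)$ the two filtration coordinates of the corner at $m$ are $f(m)$ and $g-m+f(m)$, these are respectively non-decreasing and non-increasing, they cross at $m=g$ with common value $f(g)$, and the symmetry $s\in S(K)\Leftrightarrow 2g-1-s\notin S(K)$ (which does follow from $\Delta_K(t)=t^{2g}\Delta_K(t^{-1})$ even when $S(K)$ is not a semigroup, so your argument covers general L--space knots, not just algebraic ones) guarantees that either $g$ itself or the start $m^\ast$ of the run of $S$-elements ending at $g-1$ is an $x$--type corner realizing $f(g)$; the identity $g-m^\ast+f(m^\ast)=f(g)$ holds because $m\mapsto g-m+f(m)$ is constant across a gap-free run. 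One small imprecision: the grading-zero generators of $\hfk^\infty(K)$ are not only the individual $x_i$ but all sums of an odd number of them; since the bifiltration level of a sum is the coordinatewise maximum of the levels of its terms, the minimum in \eqref{eq:v0} is still attained on a single $x_i$, so this costs you nothing, but it is worth a sentence. What your approach buys is self-containedness (no appeal to the $I$, $J$, $R$ functions of \cite{BL}); what the paper's citation buys is brevity and consistency with the machinery it reuses later in Proposition~\ref{prop:v0isgtwo}.
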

\begin{proof}[Sketch of proof]
The result is well-known, although the formulation might not be very common. We follow \cite{BL}. The invariant $V_0(K)$ can be identified
with the value of $J(0)$ in \cite[Section 4.3]{BL}. Using \cite[Lemma 6.2]{BL} we conclude that $J(0)=\#S(K)\cap[0,g)$.

There is also a direct proof, which uses \eqref{eq:v0} and the construction of the staircase. We leave it as an exercise.
\end{proof}

\subsection{Stretch of L--space knots}\label{sec:stretchLspace}
We will need two more definitions.
\begin{definition}\label{def:oddevenLspace}\
\begin{itemize}
\item The staircase $St(x_0,\ldots,x_n)$ is called \emph{even} or \emph{odd} according to whether $n$ is an even or an odd number.
By a slight abuse of language, an L--space knot will be called \emph{even} or \emph{odd} whenever its staircase is even or odd.
\item The \emph{stretch} of a staircase is the difference $\alpha(x_{k+1})-\alpha(x_k)$, where $k=n/2$ if $n$ is even or $k=(n-1)/2$
if $n$ is odd.
\end{itemize}
\end{definition}
\begin{example}
		The knot $T(5,6)$ discussed in Example~\ref{ex:staircase56} is even. The stretch is equal to $3$, and $V_0(T(5,6))$ is also $3$. See Figure \ref{fig:T56}.
\end{example}
We have the following result.
\begin{proposition}\label{prop:whenisodd}
Suppose $K$ is an L--space of genus~$g$. Then
$K$ is even if and only if $g\in S(K)$ if and only if $m$ is even (where as in \eqref{eq:deltak} above $m$ is such that $2m+1$ is the number of terms in the Alexander polynomial). 
Furthermore, the stretch is the maximal integer $w$ such that
$g,g+1,\ldots,g+w-1$  either all belong to $S(K)$ or none of them does. In other words, the stretch is given by $a_{m+1}-a_m$.
\end{proposition}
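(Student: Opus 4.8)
The plan is to run everything through the bijection between the exponents $a_0,\dots,a_{2m}$ of the Alexander polynomial and the generators of the staircase. Recall from the construction that the even-indexed exponent $a_{2k}$ produces the $x$-type generator $x_k$ and the odd-indexed exponent $a_{2k+1}$ produces the $y$-type generator $y_k$. Since the top exponent $a_{2m}=2g$ is even-indexed, it yields the last $x$-generator, forcing $n=m$ in the notation $\St(x_0,\dots,x_n)$. Consequently $K$ is even (that is, $n$ is even) if and only if $m$ is even, which is the first claimed equivalence. To connect this with membership of $g$ in $S(K)$, I would first invoke the standard symmetry of the Alexander polynomial, $a_j+a_{2m-j}=2g$; taking $j=m$ gives $a_m=g$. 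The parity property already recorded in the construction (for even $j$, $a_j\in S(K)$ and $a_j-1\notin S(K)$; for odd $j$, $a_j\notin S(K)$ and $a_j-1\in S(K)$) then says precisely that $g=a_m\in S(K)$ if and only if $m$ is even, giving the second equivalence.

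For the stretch I would first make the block structure of $S(K)$ explicit. Comparing coefficients in $\Delta_K(t)=1+(t-1)\sum_{j\in G}t^j$ shows that the coefficient of $t^j$ equals $\mathbf 1_{S}(j)-\mathbf 1_{S}(j-1)$, where $\mathbf 1_S$ is the indicator of $S(K)$; thus the $a_j$ are exactly the integers at which $S(K)$ changes state, alternately entering $S(K)$ (for even $j$) and leaving it (for odd $j$). Hence $[a_{2k},a_{2k+1})\subset S(K)$ and $[a_{2k+1},a_{2k+2})\cap S(K)=\varnothing$ for every $k$. Since $\alpha$ of the generator sitting at $a_j$ equals $\#\bigl(S(K)\cap[0,a_j)\bigr)$, the stretch $\alpha(x_{k+1})-\alpha(x_k)$ with $k=\lfloor m/2\rfloor$ telescopes into a count of semigroup elements between two consecutive even-indexed exponents. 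For $m$ even this interval is $[a_m,a_{m+2})$, whose only semigroup part is the block $[a_m,a_{m+1})$, giving $a_{m+1}-a_m$; for $m$ odd it is $[a_{m-1},a_{m+1})$, whose only semigroup part is $[a_{m-1},a_m)$, giving $a_m-a_{m-1}$. I would then convert the latter to $a_{m+1}-a_m$ using $a_m=g$ together with $a_{m-1}+a_{m+1}=2g$. Either way the stretch equals $a_{m+1}-a_m$.

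Finally I would read off the maximal-run description. Because $a_m=g$ is itself a transition point, the block beginning at $g$ is exactly $[a_m,a_{m+1})$, of length $a_{m+1}-a_m$, and it lies entirely inside $S(K)$ (if $m$ is even) or entirely in its complement (if $m$ is odd), with the state switching at $a_{m+1}$. This is precisely the assertion that the largest $w$ for which $g,g+1,\dots,g+w-1$ either all lie in $S(K)$ or all lie outside it equals $a_{m+1}-a_m$, matching the stretch. The only real subtlety, and the step I would treat most carefully, is the odd-$m$ case of the stretch computation: there the direct count produces $a_m-a_{m-1}$ rather than $a_{m+1}-a_m$, and reconciling the two genuinely requires the Alexander polynomial symmetry. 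Everything else is bookkeeping about which exponent each generator corresponds to.
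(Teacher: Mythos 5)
Your argument is correct and follows essentially the same route as the paper: identify $a_m=g$ via the symmetry of the Alexander polynomial, read off membership of $g$ in $S(K)$ from the parity of $m$, and compute the stretch as a count of semigroup elements between consecutive transition points. The only difference is that you carry out the odd-$m$ case of the stretch computation explicitly (reconciling $a_m-a_{m-1}$ with $a_{m+1}-a_m$ via symmetry), where the paper simply remarks that the case is similar.
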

\begin{proof}
The Alexander polynomial $\Delta_K$ is given by $t^{a_0}-t^{a_1}+\ldots+t^{a_{2m}}$ with $a_0=0$ and $a_{2m}=2g$. By the symmetry
of the Alexander polynomial we have $a_i=2g-a_{2m-i}$. It follows that $a_m=g$. Now if $m$ is even, then $a_m\in S(K)$ and
$a_m$ corresponds to the $x_{m/2}$ vertex of the staircase. If $m$ is odd, then $a_m\notin S(K)$ and $a_m$ corresponds to
the $y_{(m-1)/2}$ vertex of the staircase. This proves the first part.

To prove the second part we assume that $m$ is even; the case $m$ odd is similar. Then $a_{m}\in S(K)$ and the
first element not belonging to $S(K)$ that is greater than $a_m$ is $a_{m+1}$. As $m$ is even, $a_m$ corresponds to the $x_k$ vertex
for $k=m/2$ and $a_{m+1}$ corresponds to the vertex $y_k$.
By the definition of the $\alpha$--gradings, we have $\alpha(x_{k+1})=\alpha(y_k)$ and 
$\alpha(y_k)-\alpha(x_k)=\#S(K)\cap[0,a_{m+1})-\#S(K)\cap[0,a_m)=a_{m+1}-a_m$.
\end{proof}

There is a useful perspective for understanding even and odd staircases. Namely, by symmetry, each staircase has exactly one vertex $z$ such that 
$\alpha(z)=\beta(z)$. This vertex is an $x$--vertex if the staircase is even and an $y$--vertex if the staircase is odd. It is not hard to see,
e.g., via \eqref{eq:v0},
that for this $z$ we have $\alpha(z)=\beta(z)=V_0(K)$. With this in mind, we can state the following rather simple corollary
of Proposition~\ref{prop:whenisodd}, which will be used extensively
 in the remaining part of the paper.
\begin{corollary}\label{cor:easy}
Suppose $K$ is an L--space knot and $St(x_0,\ldots,x_n)$ is its staircase. Suppose $s$ is the stretch of $K$. If $n$ is odd, then
$x_{(n-1)/2)}$ is a generator at bifiltration level $(V_0(K)-s,V_0(K))$, $x_{(n+1)/2}$ is a generator at bifiltration level $(V_0(K),V_0(K)-s)$
and $y_{(n-1)/2}$ is a generator at bifiltration level $(V_0(K),V_0(K))$.

If $n$ is even, then $y_{n/2}$ is a generator at bifiltration level $(V_0(K),V_0(K))$.
\end{corollary}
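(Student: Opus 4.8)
The plan is to read off every bifiltration level directly from the combinatorial recipe for $St(K)$ given in Section~\ref{sec:Lspaceknots}, using Proposition~\ref{prop:whenisodd} to pin down the central vertex and the stretch. First I would note that $St(K)$ has one generator for each of the $2m+1$ exponents $a_0<\ldots<a_{2m}$ of $\Delta_K$, so that $n=m$, and that the symmetry $a_i=2g-a_{2m-i}$ forces $a_m=g$. The generator sitting at the exponent $a_m=g$ is precisely the unique diagonal vertex $z$ discussed just before the corollary, for which $\alpha(z)=\beta(z)=V_0(K)$: indeed $\alpha(z)=\#S(K)\cap[0,g)=V_0(K)$ by Lemma~\ref{lem:v0isg}, and $\beta(z)=\#(\Z\setminus S(K))\cap[g,\infty)$ equals the same quantity by the symmetry $x\in S(K)\Leftrightarrow 2g-1-x\notin S(K)$ of the numerical semigroup. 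By Proposition~\ref{prop:whenisodd} this vertex is the $y$-vertex $y_{(n-1)/2}$ when $n$ is odd and the diagonal $x$-vertex $x_{n/2}$ when $n$ is even, which disposes of the diagonal claim in both parities.

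For $n$ odd it remains to locate the two neighbouring $x$-vertices $x_{(n-1)/2}$ and $x_{(n+1)/2}$, which correspond to the exponents $a_{m-1}$ and $a_{m+1}$. The step here is to transport one coordinate off the diagonal vertex using the two defining relations of a staircase. Applying $\beta(x_i)=\beta(y_i)$ and $\alpha(y_i)=\alpha(x_{i+1})$ with $i=(n-1)/2$ gives $\beta(x_{(n-1)/2})=\beta(y_{(n-1)/2})=V_0(K)$ and $\alpha(x_{(n+1)/2})=\alpha(y_{(n-1)/2})=V_0(K)$. The missing coordinate is exactly what the stretch records: with $k=(n-1)/2$, Definition~\ref{def:oddevenLspace} reads $s=\alpha(x_{(n+1)/2})-\alpha(x_{(n-1)/2})=V_0(K)-\alpha(x_{(n-1)/2})$, so $\alpha(x_{(n-1)/2})=V_0(K)-s$ and $x_{(n-1)/2}$ sits at $(V_0(K)-s,V_0(K))$. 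For $x_{(n+1)/2}$ I would invoke the reflection of $St(K)$ across the diagonal, which interchanges $x_{(n-1)/2}$ and $x_{(n+1)/2}$ while swapping $\alpha$ and $\beta$, yielding $(V_0(K),V_0(K)-s)$; alternatively one computes $\beta(x_{(n+1)/2})=\#(\Z\setminus S(K))\cap[a_{m+1},\infty)=V_0(K)-s$ directly, since $g,g+1,\ldots,g+s-1$ are exactly the $s$ gaps between $a_m=g$ and $a_{m+1}=g+s$ by Proposition~\ref{prop:whenisodd}.

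I do not anticipate a real obstacle here: once the bookkeeping $n=m$ and $a_m=g$ is set up, the whole statement is a matter of applying the two staircase relations and the definition of stretch. The only input that is genuinely about the knot rather than formal manipulation is the equality $\beta(z)=V_0(K)$ at the diagonal vertex, i.e.\ $\#(\Z\setminus S(K))\cap[g,\infty)=\#S(K)\cap[0,g)$, which is the symmetry of the gap set and is already recorded in the remark preceding the corollary, so I would cite it rather than reprove it. The one thing to watch is the parity convention: the diagonal vertex is an $x$-vertex exactly when $n$ is even, so in that case it is $x_{n/2}$ that realizes $(V_0(K),V_0(K))$.
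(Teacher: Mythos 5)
Your argument is correct and is exactly the paper's intended (unwritten) proof: identify the diagonal vertex at exponent $a_m=g$, compute $\alpha=\beta=V_0(K)$ from Lemma~\ref{lem:v0isg} and the symmetry of the gap set, use Proposition~\ref{prop:whenisodd} to determine its parity, and then read off the neighbouring $x$--vertices from the staircase relations and the definition of the stretch. Your closing caveat is in fact a correction of a typo in the statement: for $n$ even the generator at bifiltration level $(V_0(K),V_0(K))$ is $x_{n/2}$, not $y_{n/2}$, as the paper's own later uses of the corollary (e.g.\ in the proofs of Theorems~\ref{thm:unv0} and \ref{thm:sumoftwo}) confirm.
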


We have the following number theoretic criterion for the stretch of torus knots.
\begin{theorem}\label{thm:sch}
Let $p,q>1$ be coprime integers with $q>p$. Let $\frac{q}{p}=[a_0,\ldots,a_k]$ be the regular continued fraction expansion with $a_k>1$.
Then the stretch of the torus knot $T(p,q)$ is equal to $\intfrac{a_k-1}{2}+1$.
\end{theorem}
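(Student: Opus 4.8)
The plan is to reduce the statement to a purely arithmetic question about the numerical semigroup $S=\langle p,q\rangle$ generated by $p$ and $q$, and then resolve it by tracking residues. First I would invoke Proposition~\ref{prop:whenisodd}: the genus of $T(p,q)$ is $g=\tfrac{(p-1)(q-1)}{2}$, one has $a_m=g$, and the stretch equals $a_{m+1}-a_m$, i.e.\ the length of the maximal block of consecutive integers of constant $S$-membership beginning at $g$. The Frobenius symmetry of $S$ (the fact that $x\in S\iff 2g-1-x\notin S$, equivalent to the symmetry of the Alexander polynomial) pairs $g-1$ with $g$ and so guarantees that $g$ does begin such a block; hence it suffices to compute the smallest $s\ge 1$ for which the $S$-membership of $g+s$ differs from that of $g$, and to show this $s$ equals $\intfrac{a_k-1}{2}+1$.

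For the membership test I would use the residue description: since $\gcd(p,q)=1$, for every integer $x\ge 0$ one has $x\in S$ if and only if $q\cdot\big(q^{-1}x \bmod p\big)\le x$, where $q^{-1}x\bmod p$ is the least non-negative residue in $\{0,\dots,p-1\}$. Equivalently, writing $j(x)=q^{-1}x\bmod p$ and comparing with $\lfloor x/q\rfloor$, we have $x\in S\iff j(x)\le\lfloor x/q\rfloor$. As $x$ increases by $1$ the residue $j(x)$ increases by $q^{-1}\bmod p$ modulo $p$, while $\lfloor x/q\rfloor$ jumps by $1$ exactly when $x$ crosses a multiple of $q$; the block length at $g$ is governed by the interplay of these two effects.

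The core of the argument is to evaluate this interplay near $x=g$ using the continued fraction. Writing $\tfrac{q'}{p'}=[a_0,\dots,a_{k-1}]$ for the penultimate convergent, one has $qp'-pq'=\pm 1$, so $q^{-1}\equiv \pm p'\pmod p$, together with $q=a_kq'+q''$ and $p=a_kp'+p''$ for the convergent one step further back. When $p$ is odd, $g=\tfrac{p-1}{2}(q-1)$ sits a distance $\tfrac{p-1}{2}$ below the multiple $\tfrac{p-1}{2}\,q$ of $q$, so a jump of $\lfloor x/q\rfloor$ occurs inside a short window to the right of $g$; substituting $q^{-1}\equiv\pm p'$ and the relation $p=a_kp'+p''$ into the inequality $j(g+s)\le\lfloor (g+s)/q\rfloor$ lets one determine, step by step, the first $s$ at which the inequality flips. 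The partial quotient $a_k$ enters precisely through $p=a_kp'+p''$, and the centrally symmetric position of $g$ produces the halving, so that the first flip occurs at $s=\intfrac{a_k-1}{2}+1$; the case $p$ even (so $q$ odd) is handled identically after exchanging the roles of $p$ and $q$ in the residue test, using $g=(p-1)\tfrac{q-1}{2}$.

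I expect the main obstacle to be this last coupled bookkeeping: the residue $j(x)$ and the floor $\lfloor x/q\rfloor$ both change within the block, and it is their combined behaviour — not either one alone — that must be controlled. The delicate point is the parity of $a_k$, which is what turns the naive length $a_k$ into $\intfrac{a_k-1}{2}+1=\lceil a_k/2\rceil$ via the central symmetry; getting the floor and the off-by-one boundary cases right — in particular checking that no earlier flip is forced and that the floor jump does not end the block prematurely, as already happens for $T(5,6)$ — is where the care lies. An alternative organizing principle would be an induction on the length $k$ of the continued fraction, or an appeal to the three-distance theorem, either of which repackages the same residue computation.
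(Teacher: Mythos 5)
Your reduction is sound and matches the first half of the paper's argument: Proposition~\ref{prop:whenisodd} turns the stretch into the length of the block of constant $S$-membership starting at $g=\delta$, and your criterion $x\in S\iff q\cdot(q^{-1}x\bmod p)\le x$ is exactly the arithmetic that drives the paper's Lemmas~\ref{lemma1}--\ref{lemma3}, where the residue $q^{-1}(\delta+\rho)\bmod p$ (there called $r_\rho$ or $r'_\rho$) is tested against the threshold $\rintfrac{p}{2}$. But the entire content of the theorem lies in the step you defer: determining the first $s$ at which the inequality $j(g+s)\le\lfloor (g+s)/q\rfloor$ flips, and showing it equals $\intfrac{a_k-1}{2}+1$. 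Your sketch does not compute $j(g)$, does not pin down $\lfloor (g+s)/q\rfloor$ across the block (the floor jump at $s=\frac{p-1}{2}$ lies \emph{inside} the block only when $a_k$ is comparable to $p$, as for $T(5,6)$, and is irrelevant otherwise --- this dichotomy has to be handled, not just flagged), and does not actually relate the crossing time of the walk with step $\pm p'$ to $a_k$ beyond invoking $p=a_kp'+p''$. Since you yourself name this ``coupled bookkeeping'' as the expected obstacle, what you have is a plan for a proof rather than a proof; the four parity cases in the paper's Lemma~\ref{lemma3}, with the explicit verification that the $p$-coefficient $t_\rho$ is non-negative, are precisely this bookkeeping carried out.

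It is worth noting that the paper sidesteps your penultimate-convergent computation entirely. Once Lemma~\ref{lemma3} shows that the block length $K(p,q)$ depends only on the residue $r=q\bmod p$ (through $\ol r=r^{-1}\bmod p$), Lemma~\ref{lemma4} gives $K(p,q)=K(p,q-p)$, and induction down the continued fraction reduces everything to the base case $q\equiv 1\pmod p$, where the residue walk has step size $1$ and the answer $\rintfrac{p}{2}-1=\intfrac{a_k-1}{2}$ is immediate. If you want to complete your argument, I would recommend adopting this reduction rather than attempting the direct convergent analysis: it replaces the delicate interaction between the residue walk and the floor function by a single trivial instance of it.
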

\noindent The proof of Theorem~\ref{thm:sch} is contained in Appendix~\ref{sec:proof}. We now pass to the main result of this section.

\begin{theorem}\label{thm:oddstretch}
Suppose $K_1$ is an odd L--space knot with stretch $s$. Suppose also $K_2$ is an L--space knot, or a connected sum of L--space knots, with genus~$g$.
If $g<s$, then $K=K_1\#K_2$ is not involutively simple.
\end{theorem}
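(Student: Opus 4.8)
The plan is to invoke Proposition~\ref{prop:C1C2}: to prove that $K=K_1\#K_2$ is \emph{not} involutively simple it suffices to rule out both (C1) and (C2). Write $\cfk^\infty(K)=\cfk^\infty(K_1)\otimes\cfk^\infty(K_2)$, so that the two filtration levels add on tensor products, set $v_1=V_0(K_1)$, and work inside the \emph{box} $B=\{x:\max(\alpha(x),\beta(x))\le v_1\}$. Because $\iota$ is skew-filtered and induces the identity on $\hfk^\infty$, for any grading-$0$ cycle $x_i$ lying in $B$ the element $\iota x_i$ lies in $B$ as well and is homologous to $x_i$. The argument then rests on two facts: (a) no minimizer $x_i$ (a grading-$0$ cycle representing the generator of $\hfk^\infty(K)$ with $\max(\alpha(x_i),\beta(x_i))=V_0(K)$) is fixed by $\iota$, equivalently, by the remark after Proposition~\ref{prop:C1C2}, no minimizer lies on the diagonal $\alpha=\beta$; and (b) $B$ contains no nonzero element in grading $1$. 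Fact (a) rules out (C1); and granting (a), for every minimizer $(1+\iota)x_i\neq0$, so any $y_i$ with $\partial y_i=(1+\iota)x_i$ and $\max(\alpha(y_i),\beta(y_i))\le V_0(K)$ would be a nonzero grading-$1$ element of $B$, contradicting (b) and ruling out (C2).

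The basic input about $K_2$ is that, writing $\cfk^\infty(K_2)$ as a tensor product of staircases, every $y$-type generator of a nontrivial staircase has both filtration levels at least $1$ and every $x$-type generator has nonnegative levels; hence each generator $g_2$ satisfies $\alpha(g_2)\ge\deg g_2$ and $\beta(g_2)\ge\deg g_2$. I first use this to show $V_0(K)=v_1$, which is exactly where $g<s$ enters. By Corollary~\ref{cor:easy} the two $x$-type generators of $\St(K_1)$ closest to the diagonal are $a=x_{(n-1)/2}$ at $(v_1-s,v_1)$ and $b=x_{(n+1)/2}$ at $(v_1,v_1-s)$, joined by the $y$-type generator $c$ at $(v_1,v_1)$, and every other $x$-type generator of $K_1$ has $\max(\alpha,\beta)>v_1$. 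Tensoring $a$ with the bottom generator $w^-$ of $\cfk^\infty(K_2)$ at $(g,0)$ gives a cycle representing the generator at bifiltration $(v_1-s+g,\,v_1)$, whose maximum equals $v_1$ precisely because $g<s$; this yields $V_0(K)\le v_1$. The reverse inequality follows from the same bound: testing a monomial $U^d(g_1\otimes g_2)$ against the two inequalities defining $\{\max(\alpha,\beta)\le v_1-1\}$ forces $\alpha(g_2)$ or $\beta(g_2)$ strictly below $\deg g_2$, so no nonzero grading-$0$ element lies there and $V_0(K)\ge v_1$.

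The combinatorial heart is (b). A grading-$1$ generator of $\cfk^\infty(K)$ is $U^d(g_1\otimes g_2)$ with $\deg g_1+\deg g_2-2d=1$, and it lies in $B$ iff $\alpha(g_1)+\alpha(g_2)-d\le v_1$ and $\beta(g_1)+\beta(g_2)-d\le v_1$. Parity splits this into two cases. If $g_1$ is $x$-type then $\deg g_2=2d+1$; taking $g_1=a$ the $\beta$-inequality gives $\beta(g_2)\le d$, contradicting $\beta(g_2)\ge\deg g_2=2d+1$ ($g_1=b$ is symmetric, and $x$-type generators farther from the diagonal only make things worse). If $g_1$ is $y$-type, the only one with $\max(\alpha,\beta)\le v_1$ is $c$ at $(v_1,v_1)$, forcing $\alpha(g_2),\beta(g_2)\le d$ with $\deg g_2=2d$; together with $\alpha(g_2),\beta(g_2)\ge 2d$ this gives $d=0$ and $g_2$ at the origin, impossible since $K_2$ is nontrivial. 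This proves (b). Oddness of $K_1$ is used decisively here: it is what puts a \emph{grading-$1$} generator $c$---and no $x$-type generator---at the diagonal point $(v_1,v_1)$, so grading $1$ cannot enter $B$.

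It remains to prove (a). Granting (b), $H_0(B)$ equals its space of grading-$0$ cycles, and the minimizers form the coset $a\otimes w^-+N$, where $N$ is the (finite-dimensional) space of grading-$0$ cycles in $B$ that are globally null. The two distinguished representatives $a\otimes w^-$ at $(v_1-s+g,v_1)$ and $b\otimes w^+$ at $(v_1,v_1-s+g)$, with $w^+$ the top generator of $\cfk^\infty(K_2)$ at $(0,g)$, lie \emph{strictly} off the diagonal because $g<s$; a direct check with the skew-filtered action of $\iota$ on $H_0(B)$---using that the only on-diagonal grading-$0$ cycles in $B$ arise from globally null classes of the form $U(c\otimes\,\cdot\,)$---shows that the generator has no diagonal representative, so no minimizer is $\iota$-fixed. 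With (a) and (b) established, both (C1) and (C2) fail and $K$ is not involutively simple. I expect the main obstacle to be (a): controlling the \emph{entire} coset of minimizers rather than a single tensor, which requires the explicit Hendricks--Manolescu--Zemke connected-sum formula for $\iota$ (although only its skew-filtered property and its triviality on $\hfk^\infty$ should be needed), whereas (b) is the clean combinatorial step carrying the oddness hypothesis. Finally, the hypothesis genuinely requires $K_2$ nontrivial (hence $g\ge1$ and $s\ge2$): for $K_2$ the unknot one has $K=K_1$, an L-space knot, which \emph{is} involutively simple.
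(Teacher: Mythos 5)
Your overall architecture coincides with the paper's: compute $V_0(K)=V_0(K_1)$ using the off-diagonal generators $x_{(n-1)/2}\otimes x'_1$ and $x_{(n+1)/2}\otimes x'_0$ (this is where $g<s$ enters), then kill (C1) and (C2) of Proposition~\ref{prop:C1C2} separately. Your Step~1 and your fact (b) are correct and are, up to reorganization by the parity of $\deg g_1$ rather than by the number of $y$--type factors, the paper's Step~1 and Lemma~\ref{lem:onegr}; the inequality $\min(\alpha,\beta)\ge\deg$ for tensor products of staircase generators is exactly the counting the paper uses.

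The genuine gap is in your fact (a). You assert that ``the only on-diagonal grading-$0$ cycles in $B$ arise from globally null classes of the form $U(c\otimes\,\cdot\,)$'' and that a ``direct check'' then excludes diagonal representatives; neither claim is proved, and the first is not obviously correctly formulated ($U(y_{(n-1)/2}\otimes y'_j)$ is not itself a cycle, and whether it can occur as a term of a cycle or boundary lying in the box is precisely the delicate point). This is the content of the paper's Lemma~\ref{lem:zerogr}: every grading-$0$ \emph{boundary} is either $x_l\otimes x'_1+x_{l+1}\otimes x'_0$ or has $\max(\alpha,\beta)>v$, so that the representatives of the generator inside the box are exactly $x_l\otimes x'_1$ and $x_{l+1}\otimes x'_0$, both strictly off the diagonal since $g<s$. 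The proof of that lemma needs one observation you are missing: a monomial appearing in a grading-$0$ boundary must contain at least one $x$--type tensor factor (it arises from $\partial$ replacing some $y_i$ by $x_i+x_{i+1}$). Without it, the borderline case $N=2$, $z^0=y^0_{(n-1)/2}$ is not excluded --- e.g.\ for $K_2=T(2,3)$ the element $U\bigl(y^0_{(n-1)/2}\otimes y'_0\bigr)$ sits exactly at $(v,v)$ and only the ``at least one $x$--factor'' constraint (forcing $m\ge 2$ and hence $\max\ge v+N$) rules it out of boundaries in the box. Until you supply this classification of boundaries (or an equivalent control of your coset $a\otimes w^-+N$), case (C1) is not excluded and the proof is incomplete.
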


\noindent The proof of Theorem~\ref{thm:oddstretch} is given in Section~\ref{sec:proofoddstretch}.

\begin{example}
If $K_1$ is odd and has stretch greater than $1$ and $K_2$ is a trefoil, then $K_1\# K_2$ is not involutively simple. For example if $K_1$
is a $T(4,n)$ torus knot and $n$ is coprime with $4$, then the stretch is $2$ by Theorem~\ref{thm:sch}. If, additionally, $(n\bmod 8)>4$, then
$T(4,n)$ is odd and then $T(4,n)\# T(2,3)$ is not involutively simple.
\end{example}

\subsection{Sums of two L--space knots}
In order to study $\ovV_0$ and $\unV_0$ for sums of L--space knots we need to have a description of the map $\iota$ for sums
of L--space knots. Such a description follows from a recent result of Zemke \cite{Zem2} generalizing an analogous
statement for the action of $\iota$ on the Heegaard Floer chain complexes of connected sums of three--manifolds \cite{HMZ}.
We begin by recalling some notation from \cite[Section 3]{Zem1} and \cite{Sar}.
Suppose $K\subset S^3$ is a knot and $y\in\cfk^\infty(K)$ is a filtered element. Write $\partial y=x_1+\dots+x_k$, where $x_1,\ldots,x_k$
are also filtered elements. We define
\begin{align*}
\Phi(y)&=\sum_{i=1}^k (\alpha(y)-\alpha(x_i))x_i\\
\Psi(y)&=\sum_{i=1}^k (\beta(y)-\beta(x_i))x_i.
\end{align*}
The maps $\Phi,\Psi\colon\cfk^\infty(K)\to\cfk^\infty(K)$ are sometimes referred to as \emph{formal derivatives} of $\partial$. 
An trivial but important consequence of the definition is that if $y$ is a cycle, then $\Phi(y)=\Psi(y)=0$. The
definition of the map $\iota$ on the connected sum involves the maps $\Phi$ and $\Psi$ on $\cfk^\infty$ of the summands,
as is shown in the following result of Zemke.

\begin{theorem}[{\cite{Zem2}}]\label{thm:zemke0}
Suppose $K_1$ and $K_2$ are knots and $\iota_1,\iota_2$ are the maps $\iota$ for $\cfk^\infty(K_1)$ and $\cfk^\infty(K_2)$. Let
 $\Phi_{i}$ and $\Psi_{i}$ be the formal derivatives of the differential of $\cfk^\infty(K_{i})$, $i=1,2$ as described above.
Then with the identification $\cfk^\infty(K_1\#K_2)=\cfk^\infty(K_1)\otimes
\cfk^\infty(K_2)$, the map $\iota$ on $\cfk^\infty(K_1\#K_2)$ is filtered chain homotopy equivalent to the map
\[u\otimes v\mapsto \iota_1(u)\otimes\iota_2(v)+\Phi_1\iota_1(u)\otimes\Psi_2\iota_2(v).\]
\end{theorem}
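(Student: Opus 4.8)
The plan is to reproduce Zemke's argument, which situates the involution $\iota_K$ inside the functorial framework of knot (link) Floer homology under decorated cobordisms. The underlying filtered complex is handled by the classical connected sum formula $\cfk^\infty(K_1\#K_2)\simeq\cfk^\infty(K_1)\otimes\cfk^\infty(K_2)$ of Ozsv\'ath--Szab\'o, so the entire content of the theorem is to identify the involution under this tensor--product identification. First I would recall the definition of $\iota_K$ as the composite of the conjugation symmetry of a doubly--pointed Heegaard diagram $(\Sigma,\alpha,\beta,w,z)$ with the ``flip'' map interchanging the two filtrations $\alpha$ and $\beta$ (equivalently, the roles of the basepoints $w$ and $z$); this is consistent with the skew--filtered property $\alpha(\iota(x))\le\beta(x)$, $\beta(\iota(x))\le\alpha(x)$ recorded above. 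The structural input is Zemke's observation that each of these ingredients is realized by a map in the decorated cobordism TQFT: conjugation by an orientation reversal of the diagram, and the basepoint interchange by the elementary basepoint--pushing maps, whose formal--derivative description produces exactly the operators $\Phi$ and $\Psi$ defined above.

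Next I would invoke the gluing (monoidal) property of Zemke's cobordism maps. Realize $K_1\#K_2$ by first forming the split link $K_1\sqcup K_2$ and then attaching a band (a saddle cobordism) to merge the two components; away from this band the cobordism splits as a disjoint union, so the ``bulk'' of the involution factors as $\iota_1\otimes\iota_2$. The failure of $\iota_{K_1\#K_2}$ to equal $\iota_1\otimes\iota_2$ is therefore localized at the connect--sum band, where a single basepoint must be transported across the merging region. The commutation relation between the involution and the band map is not strict: conjugation sends the band cobordism to its mirror, and the discrepancy between a cobordism and its conjugate, when a basepoint crosses the band, is precisely a basepoint--pushing map. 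Sarkar's analysis identifies the effect of such a push with the formal derivatives $\Phi$ and $\Psi$, and tracking which filtration is affected on each side yields the correction term $\Phi_1\iota_1(u)\otimes\Psi_2\iota_2(v)$.

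The main obstacle is the local computation at the band: one must show that the correction is \emph{exactly} $\Phi_1\iota_1\otimes\Psi_2\iota_2$---with $\Phi$ on the first factor, $\Psi$ on the second, and no higher--order terms---rather than some other combination of basepoint actions. This is where the full strength of Zemke's graph--cobordism formalism and the composition law for basepoint maps is needed, together with a careful holomorphic disk count in the connected--sum diagram. As a consistency check on the resulting formula one should verify that the proposed map is a filtered chain map whose square is chain homotopic to the Sarkar map $\varsigma$ of $K_1\#K_2$, in agreement with the general relation $\iota_K^2\simeq\varsigma$; this both tests the placement of the $\Phi$ and $\Psi$ factors and helps rule out spurious correction terms.
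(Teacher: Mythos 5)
You should first be aware that the paper contains no proof of this statement to compare against: Theorem~\ref{thm:zemke0} is imported wholesale from Zemke's work \cite{Zem2} (cited as ``in preparation''), and the authors only use its easy consequence, Theorem~\ref{thm:zemke}, which follows because $x$--type staircase generators are cycles, so the correction term $\Phi_1\iota_1(u)\otimes\Psi_2\iota_2(v)$ vanishes on them. Judged as a reconstruction of Zemke's external argument, your outline is faithful in its architecture: the Hendricks--Manolescu involution as conjugation of the doubly--pointed diagram composed with basepoint data, the connected sum realized by a merge saddle in the decorated link cobordism TQFT, conjugation intertwining the cobordism maps up to a change of decoration localized at the band, and Sarkar-type basepoint-moving maps expressed through the formal derivatives $\Phi$ and $\Psi$. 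That is indeed how Zemke's proof is organized.

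However, as a proof your proposal has a genuine gap, and it is the one you flag yourself: the entire content of the theorem is the local computation showing that the discrepancy is \emph{exactly} $\mathrm{id}+\Phi_1\otimes\Psi_2$ post-composed with $\iota_1\otimes\iota_2$, with no higher-order terms and with $\Phi$ on the first factor and $\Psi$ on the second. You reduce this to an appeal to ``the full strength of Zemke's graph--cobordism formalism and a careful holomorphic disk count,'' which is precisely the step a proof must actually carry out (in Zemke's framework, the change-of-dividing-set relation for the decorated merge cobordism); everything preceding it is formal bookkeeping. The proposed consistency check against $\iota_K^2\simeq\varsigma$ cannot close this gap: it is necessary but not sufficient. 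It does detect that \emph{some} correction is needed (one can check over $\F$ that $(\iota_1\otimes\iota_2)^2\simeq\varsigma_1\otimes\varsigma_2$ differs from the Sarkar map of the connected sum by terms involving $\Phi_1\otimes\Psi_2$ and $\Psi_1\otimes\Phi_2$), but it cannot distinguish among the homotopic variants of the formula --- for instance the version with the roles of the two summands exchanged, which is equally consistent since conjugation interchanges $\Phi$ and $\Psi$ --- and so it cannot, as you hope, ``rule out spurious correction terms'' or fix the placement of $\Phi$ versus $\Psi$. In short: a correct roadmap of Zemke's proof, but the decisive computation is asserted rather than performed.
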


As we see, in general, the map $\iota$ on the connected sum is not merely a tensor product of the $\iota$ maps on the summands. However,
if $K_1$ and $K_2$ are L--space knots and $u,v$ are $x$--type generators, then $\partial u=\partial v=0$ and so $\Phi_1(u)=\Psi_2(v)=0$.
Therefore from Theorem~\ref{thm:zemke0} we obtain the following result, which will be sufficient for our purposes in understanding the action of $\iota$ on the connected sum of two L--space knots.

\begin{theorem}\label{thm:zemke}
Suppose $K_1$ and $K_2$ are L--space knots with staircases generated respectively by $x_0,\ldots,x_n,y_0,\ldots,y_{n-1}$ and
$x_0',\ldots,x_m',y_0',\ldots,y_{m-1}'$. Then the action of $\iota$ on $\cfk^\infty(K_1\# K_2)$ has the property that
$\iota(x_i\otimes x_j')=x_{n-i}\otimes x_{m-j}$.
\end{theorem}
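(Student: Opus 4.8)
The plan is to reduce the statement to a computation of the individual maps $\iota_1$ and $\iota_2$ on the staircases of $K_1$ and $K_2$, and then feed the result into Zemke's connected-sum formula (Theorem~\ref{thm:zemke0}). The only genuine content is the single-knot computation; once that is in hand, the second term in Zemke's formula disappears for purely formal reasons.

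First I would establish the following single-knot fact: if $K$ is an L--space knot with staircase $\St(x_0,\ldots,x_n)$, then $\iota$ on $\cfk^\infty(K)$ satisfies $\iota(x_i)=x_{n-i}$ on the nose. The argument rests on three properties. (i) The generators $x_0,\ldots,x_n$ all lie in homological grading $0$ and are all cycles, and they are pairwise homologous since $x_j+x_{j+1}=\partial y_j$; in fact $H_0(\cfk^\infty(K))=\F$, generated by the common class $[x_0]=\cdots=[x_n]$, and grading $0$ carries no other cycles (the $y$--type generators and all nonzero powers of $U$ sit in other gradings). (ii) Because $\iota$ preserves homological grading and induces an isomorphism on homology, it induces an automorphism of the one-dimensional $\F$--vector space $H_0$, which over $\F=\Z/2\Z$ is the identity; hence $[\iota(x_i)]=[x_i]\neq 0$. (iii) The map $\iota$ is skew-filtered, so $\alpha(\iota(x_i))\le\beta(x_i)$ and $\beta(\iota(x_i))\le\alpha(x_i)$. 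Writing $\iota(x_i)=\sum_j c_j x_j$, I would invoke the symmetry of the staircase, $\alpha(x_j)=\beta(x_{n-j})$ and $\beta(x_j)=\alpha(x_{n-j})$, together with the monotonicity $\alpha(x_0)<\cdots<\alpha(x_n)$ and $\beta(x_0)>\cdots>\beta(x_n)$ coming from the staircase relations, to conclude that the skew-filtration bounds $\alpha(x_j)\le\alpha(x_{n-i})$ and $\beta(x_j)\le\beta(x_{n-i})$ force $j=n-i$. Combining with (ii) gives $c_{n-i}=1$, so $\iota(x_i)=x_{n-i}$. The identical argument applied to $K_2$ gives $\iota_2(x_j')=x'_{m-j}$.

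Next I would substitute into Zemke's formula. Under the identification $\cfk^\infty(K_1\#K_2)=\cfk^\infty(K_1)\otimes\cfk^\infty(K_2)$, Theorem~\ref{thm:zemke0} gives
\[\iota(x_i\otimes x_j')=\iota_1(x_i)\otimes\iota_2(x_j')+\Phi_1\iota_1(x_i)\otimes\Psi_2\iota_2(x_j').\]
By the single-knot computation the first summand equals $x_{n-i}\otimes x'_{m-j}$. For the second summand, $\iota_1(x_i)=x_{n-i}$ is a cycle, and the formal derivative of a cycle vanishes, so $\Phi_1\iota_1(x_i)=\Phi_1(x_{n-i})=0$; hence the second summand is zero. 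This yields exactly $\iota(x_i\otimes x_j')=x_{n-i}\otimes x'_{m-j}$, as claimed (the prime on $x'_{m-j}$ being the evident correction to the displayed statement).

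I expect the delicate point to be pinning down $\iota$ on a single staircase \emph{exactly}, rather than merely up to chain homotopy and lower-filtration error terms. Everything hinges on the rigidity of the staircase: that grading $0$ carries only the $x$--type generators, that these are strictly ordered by their $\alpha$-- and $\beta$--levels, and that the skew-filtration inequalities combined with this ordering isolate a single generator. I would also note that Zemke's identification holds only up to filtered chain homotopy, so the asserted equality is to be read at the level of this chosen staircase model; since all later applications only use the behaviour of $\iota$ on $x$--type generators, this is exactly what is needed.
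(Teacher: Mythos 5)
Your proposal is correct and follows essentially the same route as the paper: the paper likewise deduces the theorem from Zemke's formula (Theorem~\ref{thm:zemke0}) by observing that the formal derivatives $\Phi_1$, $\Psi_2$ vanish on $x$--type generators because these are cycles. The only difference is that you supply an explicit skew-filtration argument for the single-knot identity $\iota(x_i)=x_{n-i}$, which the paper imports implicitly from \cite[Section 7]{HM}; your derivation of it (and your remark that one really needs $\Phi_1(\iota_1(x_i))=0$, which holds since $\iota_1(x_i)$ is again a cycle) is sound.
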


This result allows us to find a tractable obstruction to knots being involutively simple. 
\begin{theorem}\label{thm:sumoftwo}
Let $K_1$ and $K_2$ be L--space knots and set $K=K_1\# K_2$. If $V_0(K)<V_0(K_1)+V_0(K_2)$, then $K$ is not involutively simple.
\end{theorem}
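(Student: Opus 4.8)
The plan is to reduce to Proposition~\ref{prop:C1C2} and rule out both of its cases. First I would record that, since $\ovV_0(K)\le V_0(K)\le\unV_0(K)$ by \eqref{eq:ovvunv}, a knot is involutively simple only if $\unV_0(K)=V_0(K)=\ovV_0(K)$; in particular $\unV_0(K)=V_0(K)$, so by Proposition~\ref{prop:C1C2} the complex $\cfk^\infty(K)$ satisfies (C1) or (C2). It therefore suffices to show that under the hypothesis $V_0(K)<V_0(K_1)+V_0(K_2)$ neither holds. Throughout I identify the grading-$0$ generators of $\hfk^\infty(K)$ with the tensors $x_i\otimes x_j'$ of the $x$-type staircase generators, and I write $\sigma$ for the involution $x_i\otimes x_j'\mapsto x_{n-i}\otimes x_{m-j}'$ induced by $\iota$ (Theorem~\ref{thm:zemke}); on filtration levels $\sigma$ is the reflection $(\alpha,\beta)\mapsto(\beta,\alpha)$.

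Case (C1) is immediate: a generator fixed by $\iota$ must have $i=n/2$ and $j=m/2$, so by Corollary~\ref{cor:easy} it is $x_{n/2}\otimes x_{m/2}'$ with bifiltration level $(V_0(K_1)+V_0(K_2),\,V_0(K_1)+V_0(K_2))$. Its maximal filtration level exceeds $V_0(K)$, so it cannot minimize \eqref{eq:v0}, and (C1) fails.

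For (C2) I would translate the condition into connectivity in a truncated complex. If $w$ is a minimizing generator then $\max(\alpha(w),\beta(w))=V_0(K)$, and a chain $y$ with $\partial y=(1+\iota)w=w+\sigma w$ and $\max(\alpha(y),\beta(y))\le V_0(K)$ is forced to be a sum of grading-$1$ generators (``edges'' $y_i\otimes x_j'$ and $x_i\otimes y_j'$), each of maximal filtration level $\le V_0(K)$. Thus (C2) holds for some minimizer exactly when some component of the graph $\Gamma$, whose vertices are the grading-$0$ generators of maximal filtration level $V_0(K)$ and whose edges are the grading-$1$ generators of maximal filtration level $\le V_0(K)$, is preserved by $\sigma$. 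The heart of the argument is then purely combinatorial. Since $V_0(K)$ is the minimal vertex value and $\partial$ does not raise filtration, every vertex of $\Gamma$ has maximal filtration level exactly $V_0(K)$, hence is of ``top type'' ($\beta=V_0(K)$) or ``right type'' ($\alpha=V_0(K)$); and a short filtration computation shows every edge of $\Gamma$ joins a top-type vertex to a right-type vertex whose index sum $i+j$ is one larger. Consequently each component of $\Gamma$ lives on a single pair of adjacent anti-diagonals $\{i+j=N_0\}\cup\{i+j=N_0+1\}$ and is a subpath of a line, on which $\sigma$ acts as a reflection.

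The hard part is to show that no such component is $\sigma$-invariant. A $\sigma$-invariant component forces $n+m$ odd and must straddle the center of the reflection, which lies between the vertex $x_{(n-1)/2}\otimes x_{m/2}'$ and its $\sigma$-image (after possibly exchanging the roles of $K_1$ and $K_2$). By Corollary~\ref{cor:easy} this central vertex has a filtration coordinate equal to $V_0(K_1)+V_0(K_2)$, which is $>V_0(K)$ by hypothesis; hence it is not a vertex of $\Gamma$. The line is therefore broken precisely at the reflection center, so no component can be symmetric about it, and none is $\sigma$-invariant. This rules out (C2). With both cases excluded, $\unV_0(K)\neq V_0(K)$, so $\unV_0(K)>V_0(K)\ge\ovV_0(K)$ and $K$ is not involutively simple. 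I expect the bookkeeping that identifies the edges of $\Gamma$ and pins down the reflection center in the two parities of $(n,m)$ to be the main technical obstacle.
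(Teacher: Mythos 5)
Your overall strategy --- reduce to Proposition~\ref{prop:C1C2} and kill (C1) and (C2) by locating a forced ``central'' generator with a filtration coordinate equal to $V_0(K_1)+V_0(K_2)$ --- matches the paper's, but your combinatorial packaging is genuinely different: you build a \emph{fixed} graph $\Gamma$ out of the minimizing grading-$0$ tensors and the grading-$1$ tensors at level $(V_0(K),V_0(K))$, show its components are monotone paths spanning two adjacent anti-diagonals, and observe that a $\sigma$-invariant path would have to contain the middle edge $y_{(n-1)/2}\otimes x'_{m/2}$ (or $x_{n/2}\otimes y'_{(m-1)/2}$), whose endpoints are excluded by Corollary~\ref{cor:easy} and the hypothesis. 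The paper instead builds a graph out of the summands of a hypothetical $y$, with four colors of edges, and argues via a cycle with an odd number of orange edges. Your filtration computations (every edge of $\Gamma$ joins a top-type vertex to a right-type vertex with index sum one larger; components are paths on two anti-diagonals on which $\sigma$ acts as a reflection) do check out.

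The gap is your opening identification of ``the grading-$0$ generators of $\hfk^\infty(K)$ with the tensors $x_i\otimes x_j'$''. The grading-$0$ cycles representing the generator of homology are exactly the sums of an \emph{odd} number of such tensors (this is the paper's Lemma~\ref{lem:step0}), and Proposition~\ref{prop:C1C2} quantifies over all of them; your exclusions of (C1) and (C2) as written treat only single tensors. For (C1): an $\iota$-fixed odd sum need not be a single fixed tensor, but an involution of an odd set has a fixed point, so some summand is the central tensor $x_{n/2}\otimes x'_{m/2}$ and the filtration level of the whole sum is already too large. For (C2): if $w=\sum_{v\in S}v$ with $|S|$ odd and $\partial y=\sum_{v\in S}(v+\sigma v)$ with $y$ supported on edges of $\Gamma$, then each component of $\Gamma$ must contain an even number (mod $2$) of the terms $v,\sigma v$; when no component is $\sigma$-invariant the components pair up under $\sigma$ and this forces $|S|$ to be even, a contradiction, whereas a $\sigma$-invariant component lets $|S|=1$ work. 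This parity step is exactly where the paper's Lemma~\ref{lem:step5} lives, and without it your equivalence ``(C2) holds iff some component of $\Gamma$ is preserved by $\sigma$'' is only established in the direction you do not need. Once you add the odd-sum structure result and this parity argument, your proof is complete.
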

Recall from \cite[Proposition 6.1]{BCG} that $V_0(K_1 \# K_2) \leq V_0(K_1) + V_0(K_2)$. The proof of Theorem~\ref{thm:sumoftwo} is postponed until Section~\ref{sec:proofoftwo}.

\begin{theorem}\label{thm:twoodds}
Let $K_1$ and $K_2$ be odd L--space knots. Then $V_0(K_1\# K_2) < V_0(K_1)+V_0(K_2)$.
\end{theorem}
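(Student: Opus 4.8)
The plan is to produce an explicit grading~$0$ homology generator of $\cfk^\infty(K_1\# K_2)$ whose filtration levels are strictly smaller than $V_0(K_1)+V_0(K_2)$, and then to invoke the minimum formula \eqref{eq:v0} for $V_0$. The geometric idea is that an odd L--space knot has, near the diagonal of its staircase, a pair of $x$--type generators placed symmetrically off the diagonal (Corollary~\ref{cor:easy}); crossing the low--$\alpha$ corner of one with the low--$\beta$ corner of the other produces a product generator whose maximum filtration level lies strictly below $V_0(K_1)+V_0(K_2)$.

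First I would fix notation: write $V_i=V_0(K_i)$ and let $s_i$ be the stretch of $K_i$, which is at least $1$ since the $\alpha$--levels of consecutive $x$--type generators of a staircase strictly increase. As both knots are odd, Corollary~\ref{cor:easy} gives, for $K_1$ with staircase $x_0,\dots,x_n$, a generator $x_{(n-1)/2}$ at bifiltration level $(V_1-s_1,\,V_1)$, and for $K_2$ with staircase $x_0',\dots,x_m'$ a generator $x_{(m+1)/2}'$ at bifiltration level $(V_2,\,V_2-s_2)$. I would then recall that $\cfk^\infty(K_1\# K_2)$ is bifiltered chain homotopy equivalent to the tensor product of the two staircase complexes, with additive bifiltration $\alpha(u\otimes v)=\alpha(u)+\alpha(v)$ and $\beta(u\otimes v)=\beta(u)+\beta(v)$; since $V_0$ is a bifiltered homotopy invariant, I may compute directly with the staircases.

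The central step is to test the element $w:=x_{(n-1)/2}\otimes x_{(m+1)/2}'$ in \eqref{eq:v0}. Both tensor factors are $x$--type, hence cycles, so $w$ is a cycle in homological grading~$0$, and by the K\"unneth theorem over $\F$ its class generates $\hfk^\infty(K_1\# K_2)$ in grading~$0$; thus $w$ is an admissible competitor in the minimum defining $V_0(K_1\# K_2)$. Its filtration levels are $\alpha(w)=V_1+V_2-s_1$ and $\beta(w)=V_1+V_2-s_2$, so $\max(\alpha(w),\beta(w))=V_1+V_2-\min(s_1,s_2)$. Equation \eqref{eq:v0} then yields
\[ V_0(K_1\# K_2)\le V_0(K_1)+V_0(K_2)-\min(s_1,s_2), \]
and since $s_1,s_2\ge 1$ the right--hand side is strictly smaller than $V_0(K_1)+V_0(K_2)$, proving the theorem.

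I expect there to be no serious obstacle, only two points deserving care: verifying that $w$ is genuinely nonzero in homology and generates $\hfk^\infty$ in grading~$0$ after the tensor identification, and confirming additivity of the bifiltration under connected sum. I would also stress that oddness of \emph{both} factors is used essentially: for an even factor the unique diagonal vertex realizing $V_0$ is itself an $x$--type generator at $(V_0,V_0)$ with no off--diagonal $x$--type partner, so pairing as above no longer lowers the maximum below $V_0(K_1)+V_0(K_2)$, and the strict inequality can fail.
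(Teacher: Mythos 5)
Your proposal is correct and follows essentially the same route as the paper: both arguments take the off--diagonal $x$--type generators $x_{(n-1)/2}$ and $x'_{(m+1)/2}$ supplied by Corollary~\ref{cor:easy}, tensor them to get a grading--$0$ homology generator with $\max(\alpha,\beta)=V_0(K_1)+V_0(K_2)-\min(s_1,s_2)$, and conclude via \eqref{eq:v0} using $s_1,s_2\ge 1$. (Your bifiltration level $(V_1+V_2-s_1,\,V_1+V_2-s_2)$ is in fact the correct ordering; the paper's displayed pair has the two entries transposed, which does not affect the maximum.)
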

\begin{proof}
Let $x_0,\ldots,x_n,y_0,\ldots,y_{n-1}$ be generators of the staircase of $K_1$ and let $x_0',\ldots,x_m'$, $y_0',\ldots,y_{m-1}'$ be
generators of the staircase of $K_2$. By assumption $n$ and $m$ are odd. By Corollary~\ref{cor:easy} we have that $x_{(n-1)/2}$ is at
bifiltration level $(V_0(K_1)-s_1,V_0(K_1))$ and $x'_{(m+1)/2}$ is at bifiltration level $(V_0(K_2),V_0(K_2)-s_2)$, where $s_1$ and
$s_2$ are the stretches of the respective staircases (see Definition~\ref{def:oddevenLspace}); here we only need that $s_1,s_2\ge 1$.

The product $x_{(n-1)/2}\otimes x'_{(m+1)/2}$ is at bifiltration level $(V_0(K_1)+V_0(K_2)-s_2,V_0(K_1)+V_0(K_2)-s_1)$ and it is a
generator of the homology of $\cfk^\infty(K_1\# K_2)$ at grading zero. By \eqref{eq:v0} this implies that 
$V_0(K_1\#K_2)\le V_0(K_1)+V_0(K_2)-\min(s_1,s_2)$.
\end{proof}
The following result, combined with Theorem \ref{thm:sumoftwoov} below, provides the converse to Theorem~\ref{thm:sumoftwo}.
\begin{theorem}\label{thm:unv0}
Let $K_1$ and $K_2$ be L--space knots and set $K=K_1\#K_2$. Suppose that $V_0(K)=V_0(K_1)+V_0(K_2)$. Then $\unV_0(K)=V_0(K)$.
\end{theorem}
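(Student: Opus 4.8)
The plan is to prove the inequality $\unV_0(K)\le V_0(K)$, since the reverse inequality $\unV_0(K)\ge V_0(K)$ is \eqref{eq:ovvunv}. By Proposition~\ref{prop:C1C2} it suffices to exhibit a grading-$0$ generator of $\hfk^\infty(K)$ that realizes the minimum $V_0(K)$ in \eqref{eq:v0} and falls into case (C1) or case (C2). The first move is to dispose of the case in which both $K_1$ and $K_2$ are odd L--space knots: by Theorem~\ref{thm:twoodds} that would force $V_0(K)<V_0(K_1)+V_0(K_2)$, contradicting the hypothesis. Hence at least one summand is even, and by the symmetry $K_1\#K_2=K_2\#K_1$ I may assume $K_1$ is even.

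Write the staircases of $K_1$ and $K_2$ as $x_0,\dots,x_n,y_0,\dots,y_{n-1}$ and $x_0',\dots,x_m',y_0',\dots,y_{m-1}'$, so that $n$ is even. Under the identification $\cfk^\infty(K)=\cfk^\infty(K_1)\otimes\cfk^\infty(K_2)$ the grading-$0$ generators of $\hfk^\infty(K)$ are the products $x_i\otimes x_j'$, their bifiltration levels add, and by Theorem~\ref{thm:zemke} we have $\iota(x_i\otimes x_j')=x_{n-i}\otimes x_{m-j}'$. Throughout I use that the central $x$--vertex $x_{n/2}$ of the even staircase sits at bifiltration level $(V_0(K_1),V_0(K_1))$, together with the bifiltration values for the odd staircase supplied by Corollary~\ref{cor:easy}.

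If $m$ is also even, set $X=x_{n/2}\otimes x_{m/2}'$. Then $\iota X=x_{n/2}\otimes x_{m/2}'=X$, and $X$ lies at bifiltration level $(V_0(K_1)+V_0(K_2),V_0(K_1)+V_0(K_2))=(V_0(K),V_0(K))$ by the hypothesis, so $\max(\alpha(X),\beta(X))=V_0(K)$ and we are in case (C1). If instead $m$ is odd, set $X=x_{n/2}\otimes x_{(m-1)/2}'$ and $Y=x_{n/2}\otimes y_{(m-1)/2}'$. Since $\partial y_{(m-1)/2}'=x_{(m-1)/2}'+x_{(m+1)/2}'$ and $\iota X=x_{n/2}\otimes x_{(m+1)/2}'$, a one-line computation (using $\partial x_{n/2}=0$) gives $\partial Y=(1+\iota)X$. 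By Corollary~\ref{cor:easy}, $x_{(m-1)/2}'$ sits at $(V_0(K_2)-s_2,V_0(K_2))$ and $y_{(m-1)/2}'$ at $(V_0(K_2),V_0(K_2))$, where $s_2\ge 0$ is the stretch of $K_2$; hence $\max(\alpha(X),\beta(X))=\max(\alpha(Y),\beta(Y))=V_0(K_1)+V_0(K_2)=V_0(K)$, which is precisely case (C2). In both cases $X$ realizes the minimum in \eqref{eq:v0}, because $\max(\alpha(X),\beta(X))=V_0(K)$ and $V_0(K)$ is by definition that minimum.

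The computations are routine; the genuine content is twofold. First, Theorem~\ref{thm:twoodds} is exactly what eliminates the two-odd case, which is the only situation where $V_0(K)=V_0(K_1)+V_0(K_2)$ fails and where no $\iota$--fixed central product exists. Second, when one summand is odd the obstruction to (C1) is repaired by the bounding element $Y=x_{n/2}\otimes y_{(m-1)/2}'$, which lands at the same filtration level as $X$ and therefore satisfies the inequality demanded by (C2). I expect the main obstacle to be purely the bookkeeping of the central vertices and the verification that $\max(\alpha(Y),\beta(Y))\le\max(\alpha(X),\beta(X))$, rather than any conceptual difficulty.
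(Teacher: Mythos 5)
Your proof is correct and follows essentially the same route as the paper: eliminate the case of two odd summands via Theorem~\ref{thm:twoodds}, then use the central $x$--vertex of the even staircase tensored with either the central $x$--vertex (both even) or the central $y$--vertex's boundary terms (one odd) to produce exactly the tower generator $X+QY$ at bifiltration level $(V_0(K),V_0(K))$ that the paper constructs. The only cosmetic difference is that you route the conclusion through Proposition~\ref{prop:C1C2} read as a sufficient criterion, whereas the paper (which proves that proposition only in the necessary direction) verifies directly that $X+QY$ is a cycle generating the first tower of $AI^+$; the sufficiency you invoke is immediate from \eqref{eq:ovddef}, so this is not a gap.
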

\begin{proof}
By Theorem~\ref{thm:twoodds} at least one of the knots must be even. So suppose $K_1$ is even. By Corollary~\ref{cor:easy} there exists
an $x$--type generator of the staircase $St(K_1)$ at bifiltration level $(V_0(K_1),V_0(K_1))$. Call this element $x$.

If $K_2$ is also even, we take an $x$--type generator $x'$ of the staircase $St(K_2)$ at bifiltration level $(V_0(K_2),V_0(K_2))$.
The tensor product $x\otimes x'$ is at bifiltration level $(V_0(K),V_0(K))$. By Theorem~\ref{thm:zemke}
this element is fixed by $\iota$. Hence $x\otimes x'$ regarded as an element in $AI^+$ is a cycle that generates a tower.
Therefore $\unV_0(K)\le \max(\alpha(x\otimes x'),\beta(x\otimes x'))=V_0(K)$. As $\unV_0(K)\ge V_0(K)$ by \eqref{eq:ovvunv}, we have finished the proof if $K_2$
is even.

If $K_2$ is odd, by Corollary~\ref{cor:easy}
there exists a $y$--type generator $y'$ of the staircase $St(K_2)$ at bifiltration level $(V_0(K_2),V_0(K_2))$.
Let $\partial y'=x_1'+x_2'$, where $x_1',x_2'$ are $x$--type generators of $St(K_2)$. Without loss of generality, suppose that $\alpha(x_1)<\alpha(x_2)$. 
Take the element $x\otimes x_1'$. Then $\iota(x\otimes x_1')=x\otimes x_2'$ and hence $(1+\iota)x\otimes x_1'=\partial (x\otimes y')$. 
It follows that $x\otimes x_1'+Q(x\otimes y')$ generates the first tower of $AI^+$. The bifiltration level of that element is
easily calculated as $(V_0(K),V_0(K))$.
\end{proof}

We now give a numerical criterion for checking the hypothesis of Theorem~\ref{thm:sumoftwo}.
\begin{proposition}\label{prop:v0isgtwo}
Suppose $K_1$ and $K_2$ are L--space knots and $S_1=S(K_1)$, $S_2=S(K_2)$, as in \eqref{eq:defS}. (If $K_1,K_2$ are algebraic knots, then
$S_1$ and $S_2$ are the corresponding semigroups.) Let $g_1$ and $g_2$ be the genera of $K_1$ and $K_2$. Then
\[V_0(K_1\#K_2)=\min_{i+j=g_1+g_2}(\#S_1\cap[0,i)+\#S_2\cap[0,j)).\]
\end{proposition}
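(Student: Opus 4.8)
The plan is to evaluate $V_0(K_1\#K_2)$ from \eqref{eq:v0}, working in the staircase model $\cfk^\infty(K_1\#K_2)\simeq St(K_1)\otimes St(K_2)$ and then reducing to a purely combinatorial optimization over $S_1,S_2$. Write the $x$--type generators as $x_0,\dots,x_n$ and $x_0',\dots,x_m'$ and the $y$--type ones as $y_i,y_j'$. The only $\F$--basis elements of the tensor product in homological grading $0$ are the products $x_i\otimes x_j'$ and the elements $U(y_i\otimes y_j')$, the latter because $y_i\otimes y_j'$ lives in grading $2$. A short computation with $\partial y_j=x_j+x_{j+1}$ shows that in any grading--$0$ cycle all coefficients of the $U(y_i\otimes y_j')$ must vanish; hence every grading--$0$ cycle has the form $z=\sum c_{ij}\,x_i\otimes x_j'$, and conversely every such combination is a cycle. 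The homology $\hfk^\infty(K_1\#K_2)$ has rank one over $\F[U,U^{-1}]$ and each single product $x_i\otimes x_j'$ represents its grading--$0$ generator, so every generator is such a combination with at least one nonzero term. Because the filtration of a sum is the maximum over its summands, any generator $z$ satisfies $\max(\alpha(z),\beta(z))\ge\max(\alpha(x_i\otimes x_j'),\beta(x_i\otimes x_j'))$ for each product term it contains, while each single product is itself a generator. Hence \eqref{eq:v0} collapses to
\[V_0(K_1\#K_2)=\min_{i,j}\max\bigl(\alpha(x_i)+\alpha(x_j'),\ \beta(x_i)+\beta(x_j')\bigr),\]
using additivity of $\alpha,\beta$ under tensor product.

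Next I would rewrite the bifiltration levels via semigroups. By the staircase construction the $x$--type generator of $K_1$ at Alexander exponent $a$ has $\alpha=\#S_1\cap[0,a)$ and $\beta=\#(\Z\setminus S_1)\cap[a,\infty)=g_1-a+\#S_1\cap[0,a)$, and similarly for $K_2$ at exponent $b$. Writing $G=g_1+g_2$, the quantity to minimize becomes
\[\phi(a,b)=\#S_1\cap[0,a)+\#S_2\cap[0,b)+\max\bigl(0,\,G-a-b\bigr),\]
with $a,b$ ranging over the $x$--corner exponents. The same formula is defined for all integers $a,b\ge0$, and a one--step comparison—passing from $a$ to $a+1$ changes the term $\#S_1\cap[0,a)$ by $[a\in S_1]$ and the $\max$--term by $0$ or $-1$—shows that over all integer pairs $\phi$ attains its minimum on the line $a+b=G$, where it equals $\#S_1\cap[0,a)+\#S_2\cap[0,b)$. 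Denote this minimal value by $V^*$; it is exactly the right-hand side of the proposition. Since the $x$--corner exponents form a subset of all integers, this already gives the inequality $V_0(K_1\#K_2)\ge V^*$.

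The main obstacle is the reverse inequality, because only pairs in which \emph{both} $a$ and $b$ are $x$--corners correspond to the available generators $x_i\otimes x_j'$, whereas a priori the unconstrained minimizer need not have either coordinate at a corner. I would handle this by a rounding argument. Starting from a minimizer $(a^*,b^*)$ on the line $a+b=G$, I first slide $a^*$ to an $x$--corner of $K_1$ along that line: if $a^*$ is a gap I increase it through the gap--run, and if it lies in the interior of an $S_1$--run I decrease it, the one--step formula showing $\phi$ stays equal to $V^*$ until I reach the first semigroup element following a gap, which is precisely an $x$--corner. Fixing this corner $a'$ and now varying $b$ alone, the same sign analysis lets me slide $b$ to an $x$--corner of $K_2$—up through a gap--run, or down through an $S_2$--run—without increasing $\phi(a',b)$. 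The resulting corner pair attains the value $V^*$, so $V_0(K_1\#K_2)\le V^*$, and together with the previous paragraph this proves equality. The only points needing care are the endpoints of the admissible range $0\le a,b$ and the verification that the flat slides terminate at $x$--corners (semigroup elements whose predecessor is a gap) rather than at $y$--corners.
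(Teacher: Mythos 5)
Your proposal is correct, but it takes a genuinely different route from the paper. The paper's proof is a two-line citation: it identifies $V_0(K_1\#K_2)$ with the invariant $J(0)=I(g_1+g_2)$ of Borodzik--Livingston via \cite[Proposition 5.1]{BL}, invokes \cite[Lemma 6.4]{BL} to replace $I$ by the function $R$, and observes that $R(g_1+g_2)$ is by definition the right-hand side. You instead give a self-contained argument inside the staircase tensor-product model: you reduce \eqref{eq:v0} to $\min_{i,j}\max\bigl(\alpha(x_i\otimes x_j'),\beta(x_i\otimes x_j')\bigr)$ (your observation that grading-zero cycles contain no $U(y_i\otimes y_j')$ terms is exactly the paper's Lemma~\ref{lem:step0}, proved there by the same $\partial=\partial_1+\partial_2$ splitting), convert the bifiltration levels to semigroup counts via $\beta=g-a+\#S\cap[0,a)$, and then solve the resulting discrete optimization $\phi(a,b)=\#S_1\cap[0,a)+\#S_2\cap[0,b)+\max(0,G-a-b)$ by a one-step monotonicity analysis. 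What your approach buys is independence from the external machinery of \cite{BL} and an explicit identification of which corner pairs realize the minimum; what it costs is the combinatorial rounding argument, which is the only delicate point. That argument does go through: since $\phi$ equals its global minimum $V^*$ on all of its minimizers and every one-step move you describe is non-increasing, the slides stay at $V^*$ and terminate at $x$-corners (characterized by $a\in S_1$, $a-1\notin S_1$, with $0$ and $2g_1$ included). One small tightening I would suggest: in your first phase, sliding $a^*$ \emph{along the line} $a+b=G$ can push $b=G-a$ below $0$ when $g_1>g_2$ and the relevant gap-run of $S_1$ ends beyond $G$; it is cleaner to freeze $b^*$ and slide $a$ alone using the two-variable $\phi$, whose increments $[a\in S_1]-[G-a-b^*>0]$ (upward through a gap-run) and $-[a-1\in S_1]+[G-a-b^*\ge 0]$ (downward through an $S_1$-run) are still non-positive at a global minimizer, after which the second phase on $b$ proceeds exactly as you wrote. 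With that adjustment the endpoint caveat you flagged disappears entirely.
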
 
\begin{proof}[Sketch of proof]
The result is a simple consequence of the K\"unneth formula for $\cfk^\infty$; we sketch the proof using
the notation of  \cite{BL}.
Using \cite[Proposition 5.1]{BL} we identify $V_0(K_1\# K_2)$ with the value of $J_{K_1\#K_2}(0)$, which is the same as
$I_{K_1\# K_2}(g_1+g_2)$. By \cite[Lemma 6.4]{BL}
we have $I_{K_1\# K_2}(g_1+g_2)=R_{K_1\# K_2}(g_1+g_2)$. The latter by definition is $\min_{i+j=g_1+g_2}(\#S_1\cap[0,i)+\#S_2\cap[0,j))$.
\end{proof}
Until now we have addressed the question of whether $\unV_0(K)=V_0(K)$, where $K$ is a connected sum of L--space knots. Another question is whether
$\ovV_0(K)=V_0(K)$. The answer is particularly simple for sums of two L--space knots: the invariant $\ovV_0(K)$ does not give any new information.

\begin{theorem}\label{thm:sumoftwoov}
Suppose $K$ is a connected sum of two L--space knots. Then $\ovV_0(K)=V_0(K)$.
\end{theorem}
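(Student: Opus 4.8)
By \eqref{eq:ovvunv} we already have $\ovV_0(K)\le V_0(K)$, so the task is to prove the reverse inequality. The plan is first to translate this into the mapping cone $AI^+$ of Proposition~\ref{prop:iotaonknots}. Write $\mathcal{T}^+\subset H_*(A^+_0)$ for the tower; its bottom generator $a$ lies in grading $-2V_0(K)$ and is represented by $U^{V_0}(x_{i_0}\otimes x'_{j_0})$ for any product of $x$--type generators minimizing \eqref{eq:v0}. Unwinding \eqref{eq:ovddef}, the second tower $Q\mathcal{T}^+$ bottoms out in grading at least $-2V_0(K)-1$, with equality---equivalently $\ovV_0(K)=V_0(K)$---precisely when $Qa$ is a non-trivial cycle in $H_{-2V_0-1}(AI^+)$. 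From the long exact sequence of the cone, the class $Qa$ survives if and only if $a\notin\iim\bigl((1+\iota)_*\colon H_*(A^+_0)\to H_*(A^+_0)\bigr)$. Thus the whole theorem reduces to showing that the bottom of the tower is not in the image of $(1+\iota)_*$.

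To attack this I would pass to the model $\cfk^\infty(K)=\cfk^\infty(K_1)\otimes\cfk^\infty(K_2)$ of Theorem~\ref{thm:zemke0} and work with the staircases $St(K_1)$, $St(K_2)$. Since $\iota^2$ is chain homotopic to the identity here, over $\F$ we have $(1+\iota)_*^2=0$, so $\iim(1+\iota)_*\subseteq\ker(1+\iota)_*=\mathrm{Fix}(\iota_*)$; moreover $\iota_* a=a$, because $\iota$ permutes the $x$--products by Theorem~\ref{thm:zemke} and all grading~$0$ $x$--products are homologous in $A^+_0$. Hence the desired statement is exactly that $a$ represents a non-zero class in $\ker(1+\iota)_*/\iim(1+\iota)_*$, the Tate homology of the involution $\iota_*$.

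The computation I would then carry out is local to the bottom grading $-2V_0(K)$. Because $a$ sits at the corner $\max(\alpha,\beta)=0$ of $A^+_0$ and $(1+\iota)$ is skew--filtered, any cycle $w$ with $(1+\iota)w$ homologous to $a$ is tightly constrained. The only genuinely new feature compared with a single staircase is the Zemke correction term $\Phi_1\iota_1\otimes\Psi_2\iota_2$ from Theorem~\ref{thm:zemke0}: since $\Phi_1$ strictly lowers the $\alpha$--filtration on the first factor and $\Psi_2$ strictly lowers the $\beta$--filtration on the second, this term lands strictly below the corner where $a$ lives and cannot contribute $a$ from a grading~$(-2V_0)$ cycle. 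Combining this with the observation that the $x$--products landing in grading $-2V_0$ are sent by $1+\iota$ into boundaries, I expect to conclude that $(1+\iota)_*$ vanishes at the bottom grading, so $a\notin\iim(1+\iota)_*$ and $\ovV_0(K)=V_0(K)$.

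The main obstacle is precisely the bookkeeping for this single correction term, and this is exactly where the hypothesis of \emph{two} summands is essential: Zemke's formula for a connected sum of two knots produces only one correction term $\Phi_1\iota_1\otimes\Psi_2\iota_2$, whose filtration drop can be controlled directly, whereas for three or more L--space summands the correction terms proliferate and interact, and the conclusion $\ovV_0=V_0$ is no longer expected to hold. A secondary point to handle with care is the grading and $U$--power bookkeeping of the correction term, so that one correctly pins down in which gradings it can act relative to the corner occupied by $a$.
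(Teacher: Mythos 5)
Your reduction is sound and is essentially the paper's own starting point: by \eqref{eq:ovvunv} only $\ovV_0(K)\ge V_0(K)$ needs proof, and via the exact triangle of the cone this amounts to showing that the bottom tower class $a=[U^{V_0}(x_{i_0}\otimes x'_{j_0})]\in H_{-2V_0(K)}(A_0^+)$ does not lie in the image of $(1+\iota)_*$. The paper phrases the same condition as the nonexistence of $y=y_{(0)}+Qy_{(1)}$ with $\di y=z+Qx$ and $\max(\alpha(z),\beta(z))<V_0(K)$, which unwinds to $\partial y_{(0)}=z_{(-1)}$ and $(1+\iota)y_{(0)}+\partial y_{(1)}=z_{(0)}+x$. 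Up to this translation you and the paper agree, and your observations that $\iota_*a=a$ and that $(1+\iota)$ sends $x$--type products to boundaries are correct.

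The gap is in the decisive step. You conclude that $(1+\iota)_*$ vanishes in grading $-2V_0(K)$ from (a) the filtration drop of the Zemke correction term and (b) the fact that $x$--products are sent by $1+\iota$ to boundaries. Point (b) only disposes of cycles of $A_0^+$ that are sums of terms $U^{V_0}(x_i\otimes x'_j)$. But $A_0^+$ is a quotient complex, so in grading $-2V_0(K)$ it has further cycles: any element with components $U^{V_0+1}(y_k\otimes y'_l)$ whose differential lands entirely in the killed region is a cycle of $A_0^+$ without being a cycle of $\cfk^\infty(K)$, and these are exactly the dangerous preimages (the paper's $y_{(0)}$ with $\partial y_{(0)}=z_{(-1)}\neq 0$ of small filtration). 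Your sketch never rules out that such a cycle $w$ has $(1+\iota)_*[w]=a$; note moreover that Theorem~\ref{thm:zemke} only describes $\iota$ on $x\otimes x'$ products, so on the $y\otimes y'$ part you would also need to control $\iota_1(y_k)$ and $\Phi_1\iota_1(y_k)$ explicitly. Handling this case is the entire content of the paper's argument: Lemmas~\ref{lem:z0is0} and \ref{lem:z1ispsecific} force every term of $z_{(-1)}$ to sit at bifiltration level $(V_0(K),V_0(K))$ before the $U$, and then a propagation argument down the staircase --- starting from a summand $y_i\otimes y'_j$ of the $y\otimes y'$ part $w_y$ at level $(V_0(K),V_0(K)+a)$ and successively forcing $y_{i-1}\otimes y'_j,\ldots,y_0\otimes y'_j$ to appear until the uncancellable term $x_0\otimes y'_j$ with $\alpha<V_0(K)$ survives in $\partial w_y$ --- produces the contradiction. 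Nothing in your outline substitutes for this, and it is in this combinatorics of $\partial$ on the $y\otimes y'$ part of a two-fold tensor product of staircases, rather than in the single Zemke correction term, that the hypothesis of exactly two summands is actually used.
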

In particular, if $K$ is a connected sum of two L--space knots, then it is involutively simple if and only if $\unV_0(K)=V_0(K)$. 
The proof of Theorem~\ref{thm:sumoftwoov} is deferred to Section~\ref{sec:proofovv}.

\begin{example}
Let $K_1=T(6,17)$, $K_2=T(4,11)$ and $K_3=T(4,13)$. Then $V_0(K_1)=12$, $V_0(K_2)=5$, $V_0(K_3)=7$, $V_0(K_1\# K_2)=17$ and
$V_0(K_1\# K_3)=18$. We see that $K_1\# K_3$ is not involutively simple. On the other hand $K_1\# K_2$ is involutively simple.
\end{example}

Combining Theorem~\ref{thm:sumoftwo} with Theorem~\ref{thm:unv0}, Proposition~\ref{prop:v0isgtwo} and Theorem~\ref{thm:sumoftwoov}
we obtain a tractable numerical  
criterion for the sum of two L--space knots to be involutively simple.
\begin{theorem}\label{thm:finaltwo}
The sum of two L--space knots $K_1$ and $K_2$ is involutively simple if and only if 
\[\min_{i+j=g_1+g_2}(\#S_1\cap[0,i)+\#S_2\cap[0,j))=\#S_1\cap[0,g_1)+\#S_2\cap[0,g_2).\]
\end{theorem}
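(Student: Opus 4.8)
The plan is to recognize that this theorem is a synthesis of the results already established in this section, so the proof reduces to chaining them together and then unpacking the numerical content of the resulting equality. First I would invoke Theorem~\ref{thm:sumoftwoov}, which asserts that $\ovV_0(K)=V_0(K)$ for any connected sum of two L--space knots. Combined with the sandwiching inequality \eqref{eq:ovvunv}, namely $\ovV_0(K)\le V_0(K)\le\unV_0(K)$, this shows that $K$ is involutively simple (i.e.\ $\unV_0(K)=\ovV_0(K)$) if and only if $\unV_0(K)=V_0(K)$. Thus the entire question collapses to deciding whether $\unV_0(K)=V_0(K)$.

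Next I would use Theorem~\ref{thm:sumoftwo} and Theorem~\ref{thm:unv0} to characterize when this happens. Since $V_0(K_1\#K_2)\le V_0(K_1)+V_0(K_2)$ always holds (by \cite[Proposition 6.1]{BCG}), there are only two possibilities. If $V_0(K)<V_0(K_1)+V_0(K_2)$, then Theorem~\ref{thm:sumoftwo} shows that $K$ is not involutively simple. If instead $V_0(K)=V_0(K_1)+V_0(K_2)$, then Theorem~\ref{thm:unv0} gives $\unV_0(K)=V_0(K)$, whence $K$ is involutively simple. Combining these, $K$ is involutively simple if and only if
\[V_0(K_1\#K_2)=V_0(K_1)+V_0(K_2).\]

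Finally I would translate this equality into the stated numerical form. By Proposition~\ref{prop:v0isgtwo} the left-hand side equals $\min_{i+j=g_1+g_2}(\#S_1\cap[0,i)+\#S_2\cap[0,j))$, while by Lemma~\ref{lem:v0isg} we have $V_0(K_1)=\#S_1\cap[0,g_1)$ and $V_0(K_2)=\#S_2\cap[0,g_2)$, so that the right-hand side equals $\#S_1\cap[0,g_1)+\#S_2\cap[0,g_2)$. Substituting these two identifications into the displayed equality gives precisely the criterion in the statement.

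I do not expect a genuine obstacle here, since all of the substantive work — the collapse of $\ovV_0$ onto $V_0$, the two directions controlled by comparing $V_0(K)$ with $V_0(K_1)+V_0(K_2)$, and the semigroup formula for $V_0$ — has been carried out in the cited results. The only points requiring care are to confirm that the two cases $V_0(K)<V_0(K_1)+V_0(K_2)$ and $V_0(K)=V_0(K_1)+V_0(K_2)$ are exhaustive (which follows from the subadditivity inequality of \cite{BCG}), and to observe that $\#S_1\cap[0,g_1)+\#S_2\cap[0,g_2)$ is literally the $(i,j)=(g_1,g_2)$ term appearing in the minimum, so that the stated equation asserts exactly that the minimum is attained at $(g_1,g_2)$.
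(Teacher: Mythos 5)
Your proposal is correct and follows exactly the route the paper intends: the paper derives Theorem~\ref{thm:finaltwo} precisely by combining Theorem~\ref{thm:sumoftwoov} (to reduce involutive simplicity to $\unV_0(K)=V_0(K)$), Theorems~\ref{thm:sumoftwo} and~\ref{thm:unv0} (to characterize this as $V_0(K)=V_0(K_1)+V_0(K_2)$, using the subadditivity from \cite{BCG} for exhaustiveness), and Proposition~\ref{prop:v0isgtwo} together with Lemma~\ref{lem:v0isg} for the numerical translation. No gaps.
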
 

\section{Rational cuspidal curves} 

\subsection{Rational cuspidal curves and their complements}\label{sec:rationalcompl}
Let $C\subset\CP^2$ be a rational cuspidal curve. 
Let $h>0$
be its degree. Suppose $z_1,\ldots,z_N$ are its singular points and let $K_1,\ldots,K_N$ be the corresponding links of singularities.
Set $K=K_1\#\ldots\# K_N$. We have the following classical result (see \cite{Moe08,Wa}), known as the \emph{genus formula}.
\begin{proposition}\label{prop:genusformula}
The three--genus of $K$ is equal to $\frac12(h-1)(h-2)$.
\end{proposition}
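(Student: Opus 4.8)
The plan is to reduce the statement to two classical facts: the additivity of the Seifert genus under connected sum, and the genus--degree formula for an irreducible plane curve with corrections coming from the local $\delta$--invariants of its singularities.

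First I would rewrite the left-hand side. Since the three--genus is additive under connected sum, the three--genus of $K=K_1\#\ldots\#K_N$ equals $\sum_{i=1}^N g(K_i)$, where $g(K_i)$ denotes the three--genus of the link $K_i$ of the singular point $z_i$. By the discussion in Section~\ref{sec:semigroups}, each $g(K_i)$ coincides with the $\delta$--invariant $\delta_i$ of $z_i$ (the cardinality of the gap set). Hence
\[ g(K)=\sum_{i=1}^N \delta_i. \]

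Next I would invoke the genus--degree formula for an irreducible plane curve of degree $h$, which relates its geometric genus $p_g$ to the arithmetic genus $\frac12(h-1)(h-2)$ by
\[ p_g=\frac12(h-1)(h-2)-\sum_{i=1}^N \delta_i. \]
Because $C$ is rational, its normalization is $\CP^1$, so $p_g=0$; this immediately yields $\sum_{i=1}^N\delta_i=\frac12(h-1)(h-2)$. Combining this with the previous display gives $g(K)=\frac12(h-1)(h-2)$, as claimed.

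The only substantive ingredient is the genus--degree formula itself, whose proof (resolving the singularities by blow-ups and applying the adjunction formula, while tracking the contribution of each $\delta$--invariant) is entirely classical and available in the cited references \cite{Moe08,Wa}, so no new work is needed. The hypotheses that must be checked are merely bookkeeping: irreducibility of $C$ holds because $C$ is the image of $\CP^1$, and the sum over singular points genuinely accounts for \emph{all} singularities precisely because $C$ is cuspidal, i.e. its only singular points are $z_1,\ldots,z_N$, each unibranch. I expect no real obstacle here; the content of the statement is simply the translation of the classical genus formula into the language of connected sums of links of singularities.
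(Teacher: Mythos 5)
Your proposal is correct and coincides with what the paper intends: the paper states this as the classical genus formula and delegates the proof to \cite{Moe08,Wa} without reproducing it, and your argument (additivity of the three--genus under connected sum, $g(K_i)=\delta_i$ for unibranch singularities, and the genus--degree formula $p_g=\frac12(h-1)(h-2)-\sum\delta_i$ with $p_g=0$ by rationality) is exactly that classical derivation.
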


Now let $N$ be a tubular neighborhood of $C$ in $\CP^2$, let $W=\CP^2\setminus N$ and let $M=\partial N=-\partial W$. The following
results are proved in \cite[Section 3]{BL}:
\begin{proposition}\label{prop:onrational}
\item[(a)] $M$ is the result of $h^2$ surgery along $K$;
\item[(b)] $W$ is a rational homology sphere;
\item[(c)] $H_2(W;\Z)=0$, $H_1(W;\Z)=\Z_h$, $H_2(W,M;\Z)=\Z_h$ and $H_2(W,M;\Z)=0$.
\end{proposition}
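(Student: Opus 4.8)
The plan is to treat part (a) as the geometric core and to deduce parts (b) and (c) from it by standard algebraic topology. For (a) I would realize a regular neighborhood $N$ of $C$ as a four-dimensional two-handlebody and read off its boundary as a surgery diagram; for (b) and (c) the only input I need is that $N$ is homotopy equivalent to $C\cong S^2$ together with the degree relation $[C]=h\,[\CP^1]$, after which the long exact sequence of the pair $(\CP^2,N)$, excision, Lefschetz duality, and the universal coefficient theorem do the rest.

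For part (a), first remove small balls $B_1,\dots,B_N$ around the singular points $z_1,\dots,z_N$, so that $C_0:=C\setminus\bigcup_i\Int B_i$ is a smooth compact planar surface of genus $0$ whose $i$-th boundary circle is the link $K_i\subset\partial B_i\cong S^3$. A regular neighborhood of $C_0$ in $\CP^2\setminus\bigcup_i\Int B_i$ is a $D^2$-bundle over $C_0$; since $C_0$ has nonempty boundary (hence vanishing $H^2$) this bundle is trivial, and its vertical boundary is the circle bundle of the normal bundle of $C$. Re-gluing each $B_i$, inside which $C\cap B_i$ is the cone $\Delta_i$ on $K_i$, should assemble $N$ into a single $0$-handle together with one $2$-handle; here it is essential that $C$ is a sphere, so that a single $2$-handle suffices. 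The framing of this $2$-handle is the Euler number of the normal bundle of $C$, which equals the self-intersection $C\cdot C=(hH)^2=h^2$, where $H$ is the hyperplane class and $H^2=1$. I would then argue that the attaching circle is the connected sum $K=K_1\#\cdots\#K_N$: the local knots $K_i$ initially sit in disjoint spheres $\partial B_i$, and the connected planar surface $C_0$ provides bands joining them, so that the band sum along $C_0$ is precisely the connected sum. This yields $N\cong D^4\cup_{h^2}(\text{2-handle along }K)$ and hence $M=\partial N=S^3_{h^2}(K)$. The main obstacle is exactly this last identification: making rigorous that merging the links living in separate balls along the connecting surface produces the connected sum with the correct framing, rather than some more complicated band sum.

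Granting (a) --- indeed granting only that $N\simeq C\cong S^2$ --- I would prove (b) and (c) simultaneously from the long exact sequence of the pair $(\CP^2,N)$. The key point is that $H_2(N;\Z)=\Z$ is generated by $[C]$, and $[C]=h\,[\CP^1]$, so the map $H_2(N;\Z)\to H_2(\CP^2;\Z)=\Z$ is multiplication by $h$. Feeding the known groups $H_*(\CP^2)$ and $H_*(N)=H_*(S^2)$ into the sequence gives $H_4(\CP^2,N)=\Z$, $H_3(\CP^2,N)=0$, $H_2(\CP^2,N)=\coker(\times h)=\Z_h$, and $H_1(\CP^2,N)=0$. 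By excision $H_k(\CP^2,N)\cong H_k(W,M)$, which already gives $H_2(W,M;\Z)=\Z_h$ together with $H_3(W,M;\Z)=H_1(W,M;\Z)=0$ (I read the repeated ``$H_2(W,M)$'' in the statement as a typo for $H_3(W,M)=0$). Tensoring with $\Q$ shows $H_k(W,M;\Q)=0$ for $k\le 3$ and $H_4(W,M;\Q)=\Q$, which via Lefschetz duality is exactly the statement that $W$ is a rational homology ball, namely part (b).

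Finally, to extract the absolute integral groups $H_*(W)$ claimed in (c), I would apply Lefschetz duality $H_k(W,M;\Z)\cong H^{4-k}(W;\Z)$ to convert the relative computation into cohomology, obtaining $H^0(W)=\Z$, $H^1(W)=H^3(W)=0$, and $H^2(W)=\Z_h$, and then run the universal coefficient theorem to pass from cohomology back to homology: $H^1(W)=0$ forces $H_1(W)$ to be torsion, $H^2(W)=\Z_h$ forces the torsion subgroup of $H_1(W)$ to be $\Z_h$ and $H_2(W)$ to be torsion, while $H^3(W)=0$ forces $H_2(W)$ to be torsion-free. Together these give $H_1(W;\Z)=\Z_h$ and $H_2(W;\Z)=0$, as asserted. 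This step is purely formal once the relative groups are in hand, so I expect no difficulty here; the entire weight of the proposition rests on the surgery description in part (a).
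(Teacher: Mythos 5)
The paper gives no proof of this proposition; it simply cites \cite[Section 3]{BL}, where the argument is essentially the one you describe (handle decomposition of the tubular neighborhood giving $M=S^3_{h^2}(K)$, then the long exact sequence of $(\CP^2,N)$, excision, Lefschetz duality and universal coefficients). Your proposal is correct and matches that standard argument, including your reading of the repeated ``$H_2(W,M)$'' as a typo for $H_3(W,M)=0$ and of ``rational homology sphere'' as ``rational homology ball''; the only imprecision is the phrase ``Euler number of the normal bundle of $C$'' for a singular curve, which is better replaced by your own observation that the framing is forced to be $C\cdot C=h^2$ by the intersection form of $N$.
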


The manifold $M$, as surgery on a knot in $S^3$, has the following enumeration of \spinc{} structures. For any integer $m\in[-h^2/2,h^2/2)$
there is a unique \spinc{} structure on $M$, denoted $\sss_m$ that extends to a \spinc{} structure $\sst_m$ on $N$, where $\sst_m$
is characterized by the fact that $\langle c_1(\sst_m),C\rangle+h^2=2m$. The enumeration we discuss here agrees with the one we
mentioned in Section~\ref{sec:undandovd}.

The \spinc{} structure $\sss_0$ is actually a Spin structure. Likewise, if $m$ is even, the \spinc{} structure corresponding to $m=-h^2/2$
is also a Spin structure. We will mostly be interested in the Spin structure $\sss_0$. Our main technical result is the following. 
\begin{proposition}\label{prop:stiefel}
Suppose that $h$ is odd. Then $H_2(W;\Z_2)=0$. In particular, $d(M,\sss_0)=\ovd(M,\sss_0)=\und(M,\sss_0)=0$.
\end{proposition}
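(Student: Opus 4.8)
The plan is to treat the proposition in two stages: first the purely homological assertion $H_2(W;\Z_2)=0$, which I will prove directly from the integral homology of $W$, and then the vanishing of $d$, $\ovd$ and $\und$, which I will obtain by verifying that the canonical Spin structure $\sss_0$ extends over $W$ and invoking Theorem~\ref{thm:hm-main}.

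For the first stage I would feed the integral homology recorded in Proposition~\ref{prop:onrational}, namely $H_2(W;\Z)=0$ and $H_1(W;\Z)=\Z_h$, into the universal coefficient theorem:
\[
H_2(W;\Z_2)\cong\big(H_2(W;\Z)\otimes\Z_2\big)\oplus\operatorname{Tor}(H_1(W;\Z),\Z_2).
\]
The first summand is zero because $H_2(W;\Z)=0$, and the second is $\operatorname{Tor}(\Z_h,\Z_2)$, i.e.\ the $2$-torsion subgroup of $\Z_h$, which vanishes precisely because $h$ is odd. Hence $H_2(W;\Z_2)=0$. Dualising over the field $\F$ (or running universal coefficients in cohomology) gives $H^2(W;\Z_2)=0$ as well, which I will need in the next stage.

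For the second stage I first note that $W$ is Spin: the obstruction $w_2(W)$ lies in $H^2(W;\Z_2)=0$ and therefore vanishes, so $W$ admits a Spin structure, whose restriction to $\partial W=-M$ is a Spin structure on $-M$. The point is to identify this restriction with $\sss_0$. Since $M$ is $h^2$-surgery on $K$ (Proposition~\ref{prop:onrational}), $H_1(M;\Z)=\Z_{h^2}$ has no $2$-torsion when $h$ is odd; thus $H^1(M;\Z_2)=0$, and as Spin structures form a torsor over this group, $M$ (and likewise $-M$) carries a unique Spin structure. Consequently any Spin structure on $W$ restricts on the boundary to the canonical one, so $\sss_0$ extends over $W$, and equally over $-W$, whose boundary is $M$. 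Since $-W$ is again a rational homology ball, applying Theorem~\ref{thm:hm-main} to it yields $\ovd(M,\sss_0)=\und(M,\sss_0)=0$, and the sandwich $\ovd(M,\sss_0)\ge d(M,\sss_0)\ge\und(M,\sss_0)$ then forces $d(M,\sss_0)=0$.

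The homological computation is routine; the step that genuinely uses the hypothesis is the Spin extension, where $h$ odd does double duty---killing $w_2(W)$ via $H^2(W;\Z_2)=0$ and removing the $2$-torsion in $H_1(M)$, so that the boundary Spin structure is unique and hence forced to be $\sss_0$. The one bookkeeping subtlety I would be careful about is the orientation convention $\partial W=-M$: to avoid tracking how $\ovd$ and $\und$ transform under orientation reversal, I apply Theorem~\ref{thm:hm-main} to $-W$ (still a rational homology ball, now with boundary $M$) rather than to $W$ itself.
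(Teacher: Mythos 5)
Your proof is correct, and its overall architecture matches the paper's: establish $H_2(W;\Z_2)=0$, deduce that $\sss_0$ extends over $W$ as a Spin structure, and invoke Theorem~\ref{thm:hm-main}. The two intermediate steps, however, are carried out with different tools. For the homology computation you use the universal coefficient theorem, $H_2(W;\Z_2)\cong\big(H_2(W;\Z)\otimes\Z_2\big)\oplus\operatorname{Tor}(\Z_h,\Z_2)$, whereas the paper runs the Bockstein exact sequence for $0\to\Z\to\Z\to\Z_2\to 0$ and uses that multiplication by $2$ is an isomorphism on $\Z_h$; these are interchangeable. The more substantive divergence is in the extension step: the paper identifies the extension obstruction directly as the relative class $w_2\in H^2(W,M;\Z_2)\cong H_2(W;\Z_2)$ (citing \cite[Exercise 5.6.2]{GS}), which vanishes, so \emph{every} Spin structure on $M$ extends. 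You instead show $W$ is absolutely Spin (killing $w_2(W)\in H^2(W;\Z_2)=0$) and then force the boundary restriction to be $\sss_0$ by observing that $H^1(M;\Z_2)=0$, so $M$ has a unique Spin structure. Your route uses $h$ odd a second time (through $H_1(M)\cong\Z_{h^2}$ having no $2$-torsion) but avoids the relative obstruction formalism, and your explicit handling of the orientation convention $\partial W=-M$ before applying Theorem~\ref{thm:hm-main} is a point of care the paper's proof elides. Both arguments are complete and yield the same conclusion.
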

\begin{proof}
The fact that $H_2(W;\Z_2)=0$ can be calculated 
from Proposition~\ref{prop:onrational} using the Bockstein exact sequence related to 
the short exact sequence $0\to\Z\to\Z\to\Z_2\to 0$. As $H_2(W;\Z)=0$, we have
\begin{equation}\label{eq:bockstein}
0\to H_2(W;\Z_2)\to H_1(W;\Z)\stackrel{\cdot 2}{\to} H_1(W;\Z)\to\ldots
\end{equation}
Now $H_1(W;\Z)\cong\Z_h$ and as $h$ is odd, multiplication by $2$ is an isomorphism. Hence $H_2(W;\Z_2)=0$.

By \cite[Exercise 5.6.2]{GS} the obstruction to extending a Spin structure from $M$ to $W$ is the second relative Stiefel--Whitney class
$w_2\in H^2(W,M;\Z_2)\cong H_2(W;\Z_2)$. As $H_2(W;\Z_2)=0$, any Spin structure on $M$ extends over $W$.

The result now follows by Theorem~\ref{thm:hm-main}.
\end{proof}
\begin{remark}\label{rem:evenbockstein}
If $h$ is even, by the Bockstein exact sequence \eqref{eq:bockstein} we have that $H_2(W;\Z_2)\cong \Z_2$, so the Spin structure $\sss_0$
on $M$ does not necessarily have to extend over $W$. In fact it does not extend. We discuss the case $h$ even
in detail in Section~\ref{sec:evendegree}.
\end{remark}

%
\subsection{Involutive Floer homology and rational cuspidal curves}
Propositions~\ref{prop:onrational} and \ref{prop:stiefel} combined with the surgery formula \eqref{eq:ovd} (note
that the surgery coefficient $h^2$ is greater than the genus $\frac12(h-1)(h-2)$) imply the following result.
\begin{theorem}\label{thm:odddegree}
Suppose $C$ is a rational cuspidal curve of odd degree.
Let $K$ be the connected sum of the links of its singular points.
Then $K$ is involutively simple.
\end{theorem}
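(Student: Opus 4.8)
The plan is to read off the desired equality $\unV_0(K)=\ovV_0(K)$ directly from the surgery formula \eqref{eq:ovd}, after feeding it the vanishing of the involutive correction terms supplied by Proposition~\ref{prop:stiefel}. First I would invoke Proposition~\ref{prop:onrational}(a) to identify the boundary manifold $M$ with the surgery $S^3_{h^2}(K)$, where $h=\deg C$ and $K=K_1\#\dots\#K_N$ is the connected sum of the links of the singular points. Since $h$ is odd, $h^2$ is odd, and the \spinc{} structure $\sss_0$ on this surgery is the canonical Spin structure singled out in Section~\ref{sec:undandovd}; this is the Spin structure to which all of the involutive machinery applies.

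Next I would check that the large-surgery formula \eqref{eq:ovd} is actually available. Its hypothesis is $p\ge g(K)$, and here $p=h^2$ while $g(K)=\frac12(h-1)(h-2)$ by the genus formula (Proposition~\ref{prop:genusformula}); the inequality $h^2\ge\frac12(h-1)(h-2)$ holds for every $h\ge 1$, so the formula applies. With this in hand, Proposition~\ref{prop:stiefel} gives $\ovd(M,\sss_0)=\und(M,\sss_0)=0$ because $h$ is odd, which is precisely the input that forces the two surgery formulas in \eqref{eq:ovd} to coincide. Substituting $p=h^2$ yields
\[\frac{h^2-1}{4}-2\ovV_0(K)=0=\frac{h^2-1}{4}-2\unV_0(K),\]
whence $\ovV_0(K)=\unV_0(K)$ (both equal to $\tfrac{h^2-1}{8}$), so $K$ is involutively simple by definition.

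I expect the genuine content of this argument to live entirely in the two cited propositions, so the step deserving the most care is not a computation but a bookkeeping check: confirming that the canonical Spin structure $\sss_0$ used by \eqref{eq:ovd} is the same Spin structure that Proposition~\ref{prop:stiefel} proves extends over $W$, and that the \spinc{} enumerations of Sections~\ref{sec:undandovd} and~\ref{sec:rationalcompl} genuinely agree. Once these identifications are pinned down, the proof is a one-line substitution; the essential topological input---the vanishing of $H_2(W;\Z_2)$ when $h$ is odd, hence the extendability of $\sss_0$ over the rational homology ball $W$---has already been isolated in Proposition~\ref{prop:stiefel}, and the oddness of $\deg C$ enters exactly there.
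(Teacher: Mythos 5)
Your proposal is correct and is exactly the argument the paper gives (the paper's proof is the one-line remark that Propositions~\ref{prop:onrational} and \ref{prop:stiefel} combined with the surgery formula \eqref{eq:ovd} and the inequality $h^2>\frac12(h-1)(h-2)$ yield the result). Your extra bookkeeping about the agreement of the Spin$^c$ enumerations and the integrality $\ovV_0(K)=\unV_0(K)=\frac{h^2-1}{8}$ is consistent with the paper's conventions and adds nothing that conflicts with its proof.
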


Together with Theorem~\ref{thm:oddstretch} we obtain the following result. 
\begin{theorem}\label{thm:main1}
Suppose $C$ is a rational cuspidal curve with singular points $z_1,\ldots,z_n$ and $n>1$. Let $K_1,\ldots,K_n$ be the corresponding links of singularities.
Assume $K_1$ is odd. Then $g(K_2\#\ldots\# K_n)\ge stretch(K_1)$.
\end{theorem}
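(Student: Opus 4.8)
The plan is to obtain Theorem~\ref{thm:main1} as an immediate consequence of the two preceding results, Theorem~\ref{thm:odddegree} and Theorem~\ref{thm:oddstretch}, by a contrapositive argument. Essentially all the work has already been done in those two theorems, and the task here is only to match up their hypotheses correctly.

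First I would fix the standing notation. By Hedden's theorem each link of singularity $K_i$ is an L--space knot; in particular $K_1$ is, by hypothesis, an \emph{odd} L--space knot, and I write $s$ for its stretch. Setting $K'=K_2\#\cdots\# K_n$, the knot $K'$ is a connected sum of L--space knots, and its three--genus is exactly the quantity $g(K_2\#\cdots\# K_n)$ appearing in the statement.

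Next I would invoke Theorem~\ref{thm:odddegree}: since $C$ has odd degree, the total connected sum $K=K_1\# K_2\#\cdots\# K_n=K_1\# K'$ is involutively simple. This is the one and only place where the odd-degree hypothesis enters, via Proposition~\ref{prop:stiefel} and the fact that the canonical Spin structure $\sss_0$ then extends over $W$.

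Finally I would apply the contrapositive of Theorem~\ref{thm:oddstretch}, taking the second knot of that theorem to be our $K'$. The theorem asserts that if $K_1$ is an odd L--space knot of stretch $s$ and $K'$ is a connected sum of L--space knots with $g(K')<s$, then $K_1\# K'$ is \emph{not} involutively simple. Since we have just shown that $K_1\# K'$ \emph{is} involutively simple, the hypothesis $g(K')<s$ must fail; hence $g(K')\ge s$, which is precisely $g(K_2\#\cdots\# K_n)\ge s$. The only points needing care are purely bookkeeping: checking that the odd-degree assumption is genuinely in force so that Theorem~\ref{thm:odddegree} applies, and that $K'$ has exactly the form permitted in Theorem~\ref{thm:oddstretch}, namely a connected sum of L--space knots. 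I do not expect any substantive obstacle beyond this hypothesis matching.
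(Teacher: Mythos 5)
Your proposal is correct and is exactly the paper's argument: the paper derives Theorem~\ref{thm:main1} by combining Theorem~\ref{thm:odddegree} (the total connected sum is involutively simple, using the odd-degree hypothesis) with the contrapositive of Theorem~\ref{thm:oddstretch}. Your remark that the odd-degree hypothesis must be in force, though omitted from the theorem's statement as printed, matches the paper's own caveat in the introduction.
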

If $K_1$ is a torus knot, Theorem~\ref{thm:main1} can be reformulated in the following way, using Theorem~\ref{thm:sch}.

\begin{theorem}\label{thm:main2}
Suppose $C$ is a rational cuspidal curve with singular points $z_1,\ldots,z_n$, $n>1$. Let $\delta_1,\ldots,\delta_n$ be the $\delta$-invariants. Assume that $z_1$ has Puiseux sequence $(p;q)$ and that $\delta_1\in S_1$, where $S_1$ is the semigroup of $z_1$. Write the regular continued fraction $\frac{q}{p}=[a_0,a_1,\ldots,a_k]$ with $a_k>1$. Then $\delta_2+\ldots+\delta_n>\intfrac{a_k-1}{2}$.
\end{theorem}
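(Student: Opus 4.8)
The plan is to recognize Theorem~\ref{thm:main2} as the torus--knot specialization of Theorem~\ref{thm:main1}, with the stretch of $K_1$ evaluated by the number--theoretic formula of Theorem~\ref{thm:sch}.

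First I would translate the hypotheses into the language of L--space knots. Since $z_1$ has Puiseux sequence $(p;q)$, its link $K_1$ is the torus knot $T(p,q)$, which is an L--space knot by Hedden's theorem; its three--genus is $\delta_1$ and its semigroup is $S_1=S(K_1)$. As the Puiseux sequence $(p;q)$ records $p$ as the multiplicity, we have $q>p>1$, so the coprime pair $(p,q)$ meets the hypotheses of Theorem~\ref{thm:sch} for the expansion $\frac{q}{p}=[a_0,\ldots,a_k]$ with $a_k>1$. The relation between $\delta_1$ and $S_1$ is used only through Proposition~\ref{prop:whenisodd}: the position of $g=\delta_1$ relative to $S_1$ records the parity of the staircase of $K_1$, and it is exactly this that places $K_1$ in the odd case needed to invoke Theorem~\ref{thm:main1}.

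Next I would use additivity of the three--genus under connected sum to write $g(K_2\#\cdots\#K_n)=\delta_2+\cdots+\delta_n$. Since $C$ has odd degree, $K_1$ is odd, and $n>1$, Theorem~\ref{thm:main1} applies and gives
\[
\delta_2+\cdots+\delta_n=g(K_2\#\cdots\#K_n)\ge\operatorname{stretch}(K_1).
\]
Finally Theorem~\ref{thm:sch} evaluates $\operatorname{stretch}(T(p,q))=\intfrac{a_k-1}{2}+1$, so substituting yields
\[
\delta_2+\cdots+\delta_n\ge\intfrac{a_k-1}{2}+1>\intfrac{a_k-1}{2},
\]
which is the claimed strict inequality.

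There is no analytic difficulty, since Theorems~\ref{thm:main1} and~\ref{thm:sch} are already in hand; the argument is pure bookkeeping, and the care is concentrated in its two ingredients. The first is the parity matching: Theorem~\ref{thm:main1} genuinely needs $K_1$ to be odd, because an even L--space knot carries a self--dual $x$--vertex at bifiltration level $(V_0,V_0)$ fixed by $\iota$ (case~(C1) of Proposition~\ref{prop:C1C2}), which tends to make the connected sum involutively simple and so destroys the obstruction; thus one must verify through Proposition~\ref{prop:whenisodd} that the stated relation of $\delta_1$ to $S_1$ really does select the odd staircase. The second is the role of the extra ``$+1$'' in Theorem~\ref{thm:sch}: it is precisely what upgrades the non--strict genus bound $g(K_2\#\cdots\#K_n)\ge\operatorname{stretch}(K_1)$ to the strict inequality $\delta_2+\cdots+\delta_n>\intfrac{a_k-1}{2}$.
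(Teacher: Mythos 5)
Your strategy is exactly the paper's: Theorem~\ref{thm:main2} is presented there in one sentence as the torus--knot specialization of Theorem~\ref{thm:main1} evaluated via Theorem~\ref{thm:sch}, and your chain
\[
\delta_2+\cdots+\delta_n=g(K_2\#\cdots\#K_n)\ \ge\ \operatorname{stretch}(K_1)\ =\ \intfrac{a_k-1}{2}+1\ >\ \intfrac{a_k-1}{2}
\]
is precisely that derivation; the bookkeeping (additivity of the three--genus, $g(K_i)=\delta_i$, reinstating the odd--degree hypothesis, and the observation that the ``$+1$'' in Theorem~\ref{thm:sch} is what produces strictness) is all correct.

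The problem is the one step you explicitly defer: you assert that ``the stated relation of $\delta_1$ to $S_1$ really does select the odd staircase,'' but Proposition~\ref{prop:whenisodd} says an L--space knot of genus $g$ is \emph{even} if and only if $g\in S(K)$ (the introduction repeats this: a link of a singularity is odd if and only if $\delta\notin S$). Since $\delta_1=g(K_1)$, the hypothesis $\delta_1\in S_1$ makes $K_1$ an \emph{even} L--space knot, so Theorem~\ref{thm:main1}, which requires $K_1$ odd, does not apply, and your argument breaks at exactly the point you flagged as needing verification. To be fair, this is an inconsistency in the paper's own statement rather than a defect of your strategy: for the derivation from Theorem~\ref{thm:main1} to go through the hypothesis must read $\delta_1\notin S_1$, and the paper's own example $T(4,5)\#T(2,3)$ (where $\delta_1=6\notin\langle4,5\rangle$, the stretch is $2$, and the conclusion $\delta_2>\intfrac{3}{2}=1$ is what actually obstructs the configuration) confirms that the odd case is the intended one, while under the printed hypothesis $\delta_1\in S_1$ that example would not even fall within the scope of the theorem. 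So: right route, but you cannot claim that $\delta_1\in S_1$ forces $K_1$ odd; you must either correct the hypothesis to $\delta_1\notin S_1$ or produce a separate argument for even $K_1$, and the paper contains no such argument.
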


Using Theorem~\ref{thm:sumoftwo} in conjunction with Theorem~\ref{thm:odddegree} we obtain the following result.
\begin{theorem}\label{thm:withconjecture}
Suppose $C$ is a rational cuspidal curve of odd degree with precisely two singular points $z_1$ and $z_2$. Let $K_1$ and $K_2$
be links of singularities. Then $V_0(K_1\# K_2)=V_0(K_1)+V_0(K_2)$. 
In particular
\[\min_{i+j=g_1+g_2}(\#S_1\cap[0,i)+\#S_2\cap[0,j))=\#S_1\cap[0,g_1)+\#S_2\cap[0,g_2),\]
where $S_1$ and $S_2$ are the semigroups of the singular points $z_1$ and $z_2$.
\end{theorem}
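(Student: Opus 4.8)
The plan is to combine the two structural results that immediately precede the statement. Note first that, by Hedden's theorem recalled in Section~\ref{sec:Lspaceknots}, each link of a cuspidal singularity is an L--space knot, so $K_1$ and $K_2$ are L--space knots and the machinery of Section~\ref{sec:Lspacesknotsandtheirsums} applies. By Theorem~\ref{thm:odddegree}, since $\deg C$ is odd, the connected sum $K=K_1\#K_2$ is involutively simple; that is, $\unV_0(K)=\ovV_0(K)$. On the other hand, Theorem~\ref{thm:sumoftwo} asserts that if $V_0(K)<V_0(K_1)+V_0(K_2)$, then $K$ is \emph{not} involutively simple. These two facts are in direct tension, and the proof consists of playing them against each other.

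First I would recall that the inequality $V_0(K_1\#K_2)\le V_0(K_1)+V_0(K_2)$ always holds, by \cite[Proposition 6.1]{BCG}. Suppose toward a contradiction that this inequality were strict. Then Theorem~\ref{thm:sumoftwo} would force $K$ to fail to be involutively simple, contradicting Theorem~\ref{thm:odddegree}. Hence the inequality cannot be strict, and we conclude $V_0(K_1\#K_2)=V_0(K_1)+V_0(K_2)$, which is the first assertion.

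To deduce the ``in particular'' statement, I would translate both sides into semigroup language. By Proposition~\ref{prop:v0isgtwo} the left-hand side equals $\min_{i+j=g_1+g_2}(\#S_1\cap[0,i)+\#S_2\cap[0,j))$, where $g_1,g_2$ are the three--genera of $K_1,K_2$. By Lemma~\ref{lem:v0isg} each $V_0(K_r)$ equals $\#S_r\cap[0,g_r)$, so the right-hand side equals $\#S_1\cap[0,g_1)+\#S_2\cap[0,g_2)$. Substituting these two identities into the equality just established yields the displayed numerical identity.

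The proof itself is short because all the substance has been front-loaded into the preceding results. The genuinely hard inputs are Theorem~\ref{thm:odddegree}, whose proof rests on the Hendricks--Manolescu obstruction (Theorem~\ref{thm:hm-main}) together with the computation in Proposition~\ref{prop:stiefel} that the canonical Spin structure $\sss_0$ extends over $W$ precisely when the degree is odd, and Theorem~\ref{thm:sumoftwo}, which relies on the filtration-level analysis of Proposition~\ref{prop:C1C2} and Zemke's connected-sum formula (Theorem~\ref{thm:zemke}). Once those are in hand, there is no real obstacle remaining here; the only point to watch is the bookkeeping in the last paragraph, namely that $g_1+g_2$ is the correct total genus index in Proposition~\ref{prop:v0isgtwo} and that the two occurrences of $V_0$ are interpreted consistently on the two sides.
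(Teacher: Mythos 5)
Your proposal is correct and follows exactly the paper's own route: the paper derives Theorem~\ref{thm:withconjecture} by combining Theorem~\ref{thm:odddegree} (involutive simplicity from odd degree) with Theorem~\ref{thm:sumoftwo} (the contrapositive forcing $V_0(K)\ge V_0(K_1)+V_0(K_2)$), the reverse inequality from \cite[Proposition 6.1]{BCG}, and the semigroup translation via Proposition~\ref{prop:v0isgtwo} and Lemma~\ref{lem:v0isg}. No issues.
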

\noindent Combining Theorem \ref{thm:withconjecture} with Theorem~\ref{thm:twoodds} then gives the following result.
\begin{theorem}\label{thm:sumoftwoodds}
Let $C$ be a rational cuspidal curve of odd degree with precisely two singular points $z_1$ and $z_2$. Let $K_1$ and $K_2$ be links
of singularities of $z_1$ and $z_2$. Then at least one of $K_1$ and $K_2$ is an even L--space knot.
\end{theorem}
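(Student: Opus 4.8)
The plan is to argue by contradiction, playing the equality furnished by Theorem~\ref{thm:withconjecture} against the strict inequality furnished by Theorem~\ref{thm:twoodds}. First I would record the setup: since $z_1$ and $z_2$ are cuspidal singular points, their links $K_1$ and $K_2$ are L--space knots by Hedden's theorem. In particular each of $K_1,K_2$ is either even or odd in the sense of Definition~\ref{def:oddevenLspace}, so the desired conclusion ``at least one of $K_1,K_2$ is an even L--space knot'' is precisely the negation of the statement ``both $K_1$ and $K_2$ are odd.'' It therefore suffices to rule out the latter.

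Next I would suppose, toward a contradiction, that both $K_1$ and $K_2$ are odd L--space knots. Theorem~\ref{thm:twoodds} then applies and gives the strict inequality
\[
V_0(K_1\# K_2) < V_0(K_1)+V_0(K_2).
\]
On the other hand, $C$ is a rational cuspidal curve of odd degree with exactly two singular points, so the hypotheses of Theorem~\ref{thm:withconjecture} are met, and it yields the equality
\[
V_0(K_1\# K_2) = V_0(K_1)+V_0(K_2).
\]
These two relations are incompatible, so the assumption that both knots are odd cannot hold, and at least one of $K_1,K_2$ must be even.

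I expect no genuine obstacle in the argument itself, since the substantive work has already been packaged into the two cited theorems: Theorem~\ref{thm:withconjecture} encodes the involutive constraint $\und=\ovd=0$ coming from the rational homology ball filling $W$ (valid exactly in the odd-degree case, via Proposition~\ref{prop:stiefel}), which forces $V_0$ to be additive across the two singularities, while Theorem~\ref{thm:twoodds} shows that oddness of both summands forbids that additivity. The only points requiring care are bookkeeping: confirming that the hypotheses ``odd degree'' and ``precisely two singular points'' line up with Theorem~\ref{thm:withconjecture}, that ``both summands odd L--space knots'' lines up with Theorem~\ref{thm:twoodds}, and that Hedden's theorem is invoked so the even/odd dichotomy is available in the first place.
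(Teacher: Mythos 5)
Your argument is correct and is exactly the paper's proof: the theorem is obtained by combining Theorem~\ref{thm:withconjecture} (additivity of $V_0$ forced by the odd-degree hypothesis) with Theorem~\ref{thm:twoodds} (strict subadditivity when both summands are odd), and your contradiction is precisely that combination.
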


\section{Examples and discussions}\label{sec:moreexamples}
In Sections~\ref{sec:degree5}, \ref{sec:degree7} and \ref{sec:degree9+} we
compare the new criterion (Theorem~\ref{thm:withconjecture}) with two topological results. The semigroup distribution property
conjectured in \cite{FLMN06} was established in \cite{BL} via Heegaard Floer theory, 
so it is natural to ask to what extent Theorem~\ref{thm:withconjecture} is stronger than the results of \cite{BL}.
Our results can also be compared with the spectrum semicontinuity, which is a more classical tool. 
We apply the spectrum semicontinuity via the $SS_l$ property of \cite{FLMN04}. We do not state either of the
two obstructions explicitly, referring to \cite{BL,FLMN04} instead.

We refer also to \cite{FLMN04, Moe08} for a survey of techniques for obstructing rational cuspidal curves before the Heegaard Floer obstruction.

\subsection{Degree $5$ curves with two singular points}\label{sec:degree5}
Rational cuspidal curves in $\CP^2$ with two singular points and degree $5$ have already been classified; see \cite[Section 6.1.3]{Moe08}.
We will show that topological methods are enough achieve the `geographical' part of the classification. That is, we can
show which configurations of singular points cannot be realized as singular points on a degree 5 rational cuspidal curve in $\CP^2$. All the
remaining cases can be constructed. The current Heegaard Floer methods are unable to distinguish different rational cuspidal curves with
the same configurations of singular points; that is, we cannot say anything about the `botany' problem.

For degree $5$ there are six potential configurations of pairs of singular points such that the sum of the genera of the links is $\frac12(5-1)(5-2)=6$,
that is, the genus formula (Proposition~\ref{prop:genusformula}) is satisfied. These pairs
are   
$((3;4),(3;4))$, $((3;4),(2;7))$, $((2;7),(2;7))$, $((3;5),(2;5))$, $((2;9),(2;5))$ and $((2;11),(2;3))$. Out of these, only the first case
fails the semigroup distribution property of the first author and Livingston \cite{BL}. Theorem~\ref{thm:withconjecture} obstructs
the cases $((2;7),(2;7))$ and $((2;11),(2;3))$. The remaining four cases can be realized by an explicit construction.

\begin{remark}
The case $((2;7),(2;7))$ can also be obstructed by the spectrum semicontinuity property $SS_l$; see \cite{FLMN04} for more details. The case
$((2;11),(2;3))$ cannot.
\end{remark}

\subsection{Degree $7$ curves}\label{sec:degree7}
 
For degree $7$ curves with two singular points there are altogether 41 potential configurations of singular points whose sum of genera
is $\frac12(7-1)(7-2)=15$. Out of them, 13 satisfy the semigroup distribution property. They are presented in the following
tabularized form.

\smallskip
\begin{tabular}{|c|c|l|}\hline
\multicolumn{2}{|c|}{Singular points} & Comments \\\hline
$ (3; 11)  $&$(2; 11)$ & Obstructed by Theorem~\ref{thm:withconjecture} \\\hline
$ (4; 7)   $&$(2; 13)$ & Obstructed by Theorem~\ref{thm:withconjecture} \\\hline
$ (4; 6, 13)$&$(3; 5) $ & Obstructed by Theorem~\ref{thm:withconjecture} \\\hline
$ (5; 7)   $&$(2; 7) $ & Case 3 of Fenske's list with $a=d=2$\\\hline 
$ (4; 6, 9) $&$(3; 7) $ & Case 4 of Fenske's list with $a=d=2$\\\hline
$ (4; 6, 7) $&$(3; 8) $ & Case 5 of Fenske's list with $a=d=2$\\\hline
$ (5; 6)   $&$(2; 11)$  & Case 8 of Fenske's list with $a=2$\\\hline
$ (4; 7)    $&$(3; 7) $ & Case 3 of Fenske's list with $a=1$, $d=3$\\\hline
$ (4; 5)    $&$(3; 10)$ & Case 4 of Fenske's list with $a=1$, $d=3$\\\hline
$ (4; 6, 15)$&$(3; 4) $ & Can be obstructed using \cite[Theorem 1.2]{Fen}\\\hline 
$(3; 14)$ &$(2; 5)$    &  Obstructed. See Remark~\ref{rem:moe}\\\hline
$ (3; 13)  $&$(2; 7) $ &  Obstructed. See Remark~\ref{rem:moe}\\\hline
$ (3; 10)  $&$(2; 13)$ &  Exists. See Remark~\ref{rem:moe}\\\hline
\end{tabular}

\smallskip
Here `Fenske's list' refers to the construction of Fenske \cite[Theorem 1.1]{Fen}; see also \cite[Section 7.3]{Moe08}.
The last four cases cannot be obstructed using known topological methods; however the case $((4;6,15),(3;4))$ can be obstructed
using methods from algebraic geometry.

\begin{remark}\label{rem:moe}
We were informed by Karoline Moe that a rational cuspidal curve with singular points  $(3;10)$ and $(2;13)$ can be explictly constructed and the 
two remaining cases can also be obstructed using methods of algebraic geometry. 
\end{remark}

\subsection{Some statistics on higher degree curves}\label{sec:degree9+}
It is possible to implement the semigroup distribution property of \cite{BL}, the spectrum semicontinuity and Theorem~\ref{thm:withconjecture}
and see in how many cases Theorem~\ref{thm:withconjecture} provides an obstruction. We gather calculations for low degree in the following table.

\begin{tabular}{|c|c|c|c|}\hline
Degree & Total & Pass semigroup and spectrum & Pass Theorem~\ref{thm:withconjecture}\\\hline
5& 6 & 4 & 3\\\hline
7& 41 & 13 & 10\\\hline
9& 222 & 37 & 25 \\\hline
11& 937 & 43 & 26\\\hline
13& 3539 & 90 & 66 \\\hline
15& 11925  &126& 75\\\hline
17& 35986 & 149 & 86 \\\hline
\end{tabular}
\ 

\medskip
\noindent Here `total' means the total number of pairs of singular points that pass the genus formula (Proposition~\ref{prop:genusformula}).
The third column tells how many of these pairs pass both the $SS_l$ spectrum semicontinuity property of \cite{FLMN04} and
the semigroup distribution property of \cite{BL}. 
The last column describes the number of pairs passing both the semigroup distribution and the spectrum semicontinuity obstructions
and satisfying the criterion of Theorem~\ref{thm:withconjecture}. 

\subsection{Sums of more L--space knots}

We will now give a few examples showing that the direct 
analogs of Theorems~\ref{thm:sumoftwo} and \ref{thm:sumoftwoov}  for sums of more than two L--space knots do not hold.

The first result shows the failure of Theorem~\ref{thm:sumoftwo} (and also the failure of Theorem \ref{thm:sumoftwoodds})
for sums of more than two L--space knots.
\begin{example}
It is not true in general that a sum of more than two odd L--space knots is not involutively simple. A remarkable example
is the sum $K=T(2,7)\# T(2,3)\# T(2,3)\# T(2,3)$. All the summands are odd L--space knots. We have that $V_0(T(2,3))=1$ and $V_0(T(2,7))=2$ while $V_0(K)=3$.  The knot $K$ is alternating, so the fact that
$V_0(K)=\ovV_0(K)=\unV_0(K)$ can be checked using \cite[Proposition 8.2]{HM}. However another argument can be given. There
exists a rational cuspidal curve of degree 5, with four singular points, such that the link of the first singular points is $T(2,7)$ and the links
of the remaining three are $T(2,3)$. By Theorem~\ref{thm:odddegree} we conclude that $K$ is involutively simple.
\end{example}
Another aspect of the above example is that the sum of two non-involutively simple knots (in this case, $T(2,7)\# T(2,3)$ and $T(2,3)\# T(2,3)$)
can be involutively simple.

\smallskip
Theorem~\ref{thm:sumoftwoov} also fails for sums of more than two knots. Consider the sum of three trefoils. It is an alternating
knot with Ozsv\'ath--Szab\'o $\tau$ invariant equal to $3$. The Alexander polynomial is $\Delta=(t-1+t^{-1})^3$. Write $\Delta$ as
\[\Delta=t^3-t^2+t-1+t^{-1}-t^{-2}+t^{-3}-r(t)(t-2+t^{-1}),\]
where in this case we have
\[r(t)=-1+2(t+t^{-1}).\]
By \cite[Proposition 8.2(2b)]{HM} it follows that $\ovV_0(K)<V_0(K)$.

\subsection{Relation to new conjectures of \cite{BodN}}\label{sec:differences}

In \cite[Conjecture 2.1.4]{BodN} Bodn\'ar and N\'emethi stated a conjecture about the semigroups of singular points occurring on
a rational cuspidal curve. The most natural formulation is in terms of lattice homologies of $S^3_{-h}(K)$, where $K$ is the sum
of the links of singular points and $h$ is the degree of the curve;  
see \cite[Conjecture 3.2.2]{BodN}. We compare this conjecture
with results of the present article.

There are two differences that can be seen immediately. 
First of all, all theorems in the present paper work only for rational cuspidal curve of odd degree and we
have counterexamples in even degree; see Section~\ref{sec:evendegree} below. 
The conjecture of Bodn\'ar and N\'emethi does not have this restriction.

On the other hand, the conjecture of Bodn\'ar and N\'emethi does not give any more information than the semigroup distribution property of \cite{BL}
if the number of singular points is $2$. On the contrary, results of the present article are most transparent if the number of singular points
is actually $2$. 

To conclude, the results of the present article are different than the conjecture of \cite{BodN}. 

%

\subsection{The case of even degree}\label{sec:evendegree}
As we saw already in Remark~\ref{rem:evenbockstein}, if $h=\deg C$ is even, then $H_2(W;\Z_2)\cong \Z_2$ and the Spin structure
$\sss_0$ on $M$ does not necessarily extend over $W$. In fact things are as bad as one can imagine.
\begin{proposition}\ \label{prop:evenisbad} 
Let $C$ be a rational cuspidal curve of even degree $h$, with $M$ and $W$ as in Section \ref{sec:rationalcompl}. 
\begin{itemize}
\item[(a)] The Spin structure $\sss_0$ over $M$ does not extend over $W$; moreover, the value $d(M,\sss_0)$ does not depend on the degree $h$
only.
\item[(b)] If $K$ is the connected sum of links of singular points of a rational cuspidal curve of even degree, then $K$ does not have
to be involutively simple.
\end{itemize}
\end{proposition}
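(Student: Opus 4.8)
The plan is to prove (a) by a direct computation of $d(M,\sss_0)$ via the large surgery formula and a parity observation, and to prove (b) by exhibiting an explicit even-degree curve and feeding it into the combinatorial obstructions already established. First, for the non-extension claim in (a): by Proposition~\ref{prop:onrational}(a) we have $M=S^3_{h^2}(K)$, and by the genus formula (Proposition~\ref{prop:genusformula}) $g(K)=\tfrac12(h-1)(h-2)\le h^2$, so the surgery formula \eqref{eq:v0surg} applies and yields
\[
d(M,\sss_0)=\frac{h^2-1}{4}-2V_0(K).
\]
Writing $h=2k$ this becomes $k^2-\tfrac14-2V_0(K)$, which lies in $\Z-\tfrac14$ and is therefore never zero. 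Since $W$ is a rational homology ball (Proposition~\ref{prop:onrational}(b)) and $d(M,\sss)=0$ for any \spinc{} structure that extends over a rational homology ball \cite{OzSz03}, the Spin structure $\sss_0$ cannot extend over $W$. (As a sanity check, in the odd case $\sss_0$ does extend and the same formula forces $V_0(K)=(h^2-1)/8\in\Z$, consistent with Proposition~\ref{prop:stiefel}.)

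For the ``moreover'' part of (a), the displayed formula shows that $d(M,\sss_0)$ is determined by $h$ together with $V_0(K)$. In the odd case $V_0(K)$ is pinned to $(h^2-1)/8$, but for even $h$ there is no such constraint, and $V_0(K)$ genuinely depends on the configuration of singular points. Thus it suffices to exhibit two rational cuspidal curves of the \emph{same} even degree whose connected sums of links have distinct $V_0(K)$; one computes these values from the semigroups via Lemma~\ref{lem:v0isg} and Proposition~\ref{prop:v0isgtwo}. The existence of two such curves (e.g.\ two sextics with different singular-point configurations) gives two different values of $d(M,\sss_0)$ for the same $h$.

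For (b): by the theorem of Hedden \cite{Hed} each link $K_i$ is an L--space knot. The plan is to produce an even-degree rational cuspidal curve with exactly two singular points for which $V_0(K_1\#K_2)<V_0(K_1)+V_0(K_2)$; Theorem~\ref{thm:sumoftwo} then immediately shows $K=K_1\#K_2$ is not involutively simple, in sharp contrast with the odd case (Theorem~\ref{thm:odddegree}). The cleanest way to guarantee the strict inequality is to arrange that both cusps are \emph{odd} L--space knots, i.e.\ $g_i\notin S_i$ by Proposition~\ref{prop:whenisodd}, since then Theorem~\ref{thm:twoodds} supplies the inequality automatically. The first candidate degree is $6$, where $g_1+g_2=10$ and one looks for two odd cusps, e.g.\ of types $(2;2a_1+1)$ and $(2;2a_2+1)$ with $a_1,a_2$ odd and $a_1+a_2=10$. (Should no such bicuspidal curve be realizable, one may instead use any two-cusp even-degree curve failing the equality of Theorem~\ref{thm:finaltwo}, again invoking Theorem~\ref{thm:sumoftwo}.)

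The Floer-theoretic content above is entirely routine given the earlier sections: the surgery formula, the parity computation, and the implications of Theorems~\ref{thm:twoodds} and \ref{thm:sumoftwo} require no new ideas. The main obstacle is purely algebro-geometric, namely the \emph{existence} of the explicit even-degree curves needed in the ``moreover'' part of (a) and in (b). Establishing that a bicuspidal curve with the prescribed odd cusps is realizable in $\CP^2$ must be settled either by citing the classification of low-degree rational cuspidal sextics or by producing defining equations; once realizability is in hand, verifying oddness of the cusps and the strict inequality $V_0(K)<V_0(K_1)+V_0(K_2)$ reduces to the semigroup computations already developed in Section~\ref{sec:Lspacesknotsandtheirsums}.
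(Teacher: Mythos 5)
Your argument for the non-extension claim in (a) is correct and is exactly the paper's: $d(M,\sss_0)=\frac{h^2-1}{4}-2V_0(K)\in\Z-\frac14$ for even $h$, so it cannot vanish, whereas extension over the rational homology ball $W$ would force $d(M,\sss_0)=0$ by \cite{OzSz03}. The logical skeleton of the rest also matches the paper (reduce the ``moreover'' clause to finding two same-degree curves with different $V_0(K)$; reduce (b) to finding an even-degree bicuspidal curve with two odd cusps and then apply Theorems~\ref{thm:twoodds} and~\ref{thm:sumoftwo}). The problem is that you never actually produce the curves, and you say so yourself; but the existence of these curves is the entire content of those two claims, so as it stands the proof is incomplete. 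Worse, your specific candidates for (b) --- degree~$6$ curves with two cusps of type $(2;2a_i+1)$, $a_i$ odd, $a_1+a_2=10$ --- are not known to be realizable, and I would not expect them to be: bicuspidal sextics with two simple $(2;q)$-type cusps of those genera do not appear in the known constructions, so the fallback clause ``should no such curve be realizable\dots'' is doing all the work and is itself unsubstantiated.

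The paper closes both gaps with explicit, classically known curves. For the ``moreover'' part of (a) it takes two \emph{unicuspidal} families of degree $h$: the curve with singularity $(h-1;h)$ (link $T(h-1,h)$, $V_0=\frac18h(h+2)$) and the curve with singularity $(h/2;2h-1)$ (link $T(\frac h2,2h-1)$, $V_0=h^2/8$ or $\frac{h^2+1}{8}$); both exist by \cite{FLMN04,Moe08}, and the resulting $d$-invariants differ by roughly $\frac h2$. For (b) it takes the degree~$6$ curve with cusps $(4;6,9)$ and $(2;3)$, which is Case~1 of Fenske's list and hence exists; the first link is the $(2;15)$-cable of the trefoil with $S=\langle 4,6,15\rangle$ and genus $9\notin S$, so both links are odd L--space knots and Theorems~\ref{thm:twoodds} and~\ref{thm:sumoftwo} apply. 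Note that neither witness in (b) is a torus knot cusp of type $(2;q)$, which is why your Ansatz misses it. To complete your proof you must either cite these (or comparable) existence results explicitly or restrict your search to configurations already known to be constructible.
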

The remainder of Section~\ref{sec:evendegree} is devoted to the proof of Proposition~\ref{prop:evenisbad}.

Part (a) can easily be proved using obstruction theory; here we give a quick Heegaard Floer argument. 
Notice that $d(M,\sss_0)=\frac{h^2-1}{4}-2V_0(K)$. If $\deg C$ is even, then $d(M,\sss_0)$ cannot be integral, as $V_0(K)\in\Z$.
If the Spin structure $\sss_0$ on $M$ extended to a \spinc{} structure on $W$, then by the results of Ozsv\'ath and Szab\'o \cite{OzSz03}, 
we would have $d(M,\sss_0)=0$, as $W$ is a rational homology ball. This shows that $\sss_0$ does not extend over $W$.

To show that the value of $d(M,\sss_0)$ does not depend on the degree of $C$ only, consider 
two examples depending on a parameter $h$, which we assume to be an even integer greater than $2$.
\begin{itemize}
\item[(a1)] The rational cuspidal curve of degree $h$ with singularity $(h-1;h)$.
\item[(a2)] The rational cuspidal curve of degree $h$ with singularity $(h/2;2h-1)$.
\end{itemize}
Both families exist (see \cite{FLMN04,Moe08}) and the curves can be given by an explicit equation.
The curves each have a single singular point: in case (a1) this is the torus knot $T(h-1,h)$ and in case (a2)
this is the torus knot $T(\frac{h}{2},2h-1)$. 
The genus in both cases is equal to $\frac12(h-1)(h-2)$. The $V_0$ invariant of the torus knot $T(h-1,h)$ is equal to $\frac18 h(h+2)$.
The $V_0$ invariant of the torus knot $T(\frac{h}{2},2h-1)$ is equal to $h^2/8$ if $4 \mid h$ and $\frac{h^2+1}{8}$ if $4\nmid h$. We see that the
difference between the $d$-invariants of $M$ in the two cases grows like $\frac{h}{2}$ as $h\to\infty$.

As for (b) let $C$ be the degree $6$ curve having two singular points, one with Puiseux sequence $(4;6,9)$, the other $(2;3)$. This
is Case~1 of Fenske's list with $d=a=2$ and $b=1$; see \cite{Fen} or \cite[Section 7.3]{Moe08}. The link of the $(2;3)$ singularity
is the trefoil knot and it is an odd L--space knot. The link of the $(4;6,9)$ singularity is the $(2;15)$ cable on the trefoil;
see \cite{EN} for an algorithm for determining the link of a singular point from its Puiseux sequence. As described in \cite{Wa}, the semigroup is generated by $(4,6,15)$ (the number $15$ appearing
here arises as $4\cdot 6/\gcd(4,6)+(9-6)$). The genus of the link is $9$, which does not belong to the semigroup generated
by $4,6$ and $15$. It follows that the $(2;15)$ cable on the trefoil is also an odd L--space knot. If $K$ is a connected sum
of the links of singularities of $C$, then by Theorem~\ref{thm:twoodds} $K$ is not involutively simple, so 
the analog of Theorem~\ref{thm:odddegree} for rational cuspidal curves of even degree
does not hold. 
%
%
%
\section{Proof of Theorem~\ref{thm:oddstretch}}\label{sec:proofoddstretch}
For the reader's convenience we recall the statement of Theorem~\ref{thm:oddstretch}.

\newtheorem*{thm:oddstretch}{Theorem \ref{thm:oddstretch}}
\begin{thm:oddstretch}
Suppose $K_1$ is an odd L--space knot with stretch $s$. Suppose $K_2$ is an L--space knot, or a connected sum of L--space knots, with genus~$g$.
If $g<s$, then $K=K_1\#K_2$ is not involutively simple.
\end{thm:oddstretch}

The remainder of the section is devoted to the proof of this theorem.

Let $v=V_0(K_1)$. Denote by $x_0,\ldots,x_n$ the $x$--type generators of the staircase of $K_1$. Set $l=(n-1)/2$.
By Corollary~\ref{cor:easy} $K_1$ has a generator $x_l$ at bifiltration level $(v-s,v)$ and a generator $x_{l+1}$ at 
bifiltration level $(v,v-s)$. 

Let $K_2$ be a connected sum of L--space knots $J_1, \dots, J_m$ where $\cfk^\infty(J_i)$ is a staircase complex with $x$--type generators 
$x^i_0, \dots, x^i_{n_i}$.  Let $\{x'_k\}$ denote the $\prod_{i=1}^{m} (n_i+1)$--element set consisting of the products $x^1_{j_1} \otimes \dots \otimes x^m_{j_m}$, ordered in such a way that $x'_0=x^1_0 \otimes \dots \otimes x^m_0$ and $x'_1=x^1_{n_1} \otimes \dots \otimes x^m_{n_m}$. Note that $(\alpha(x'_0), \beta(x'_0)) = (0, g)$ and $(\alpha(x'_1), \beta(x'_1)) = (g, 0)$. We also have that $0 \le \alpha(x'_k), \beta(x'_k) \le g$, and that $x'_0$ (respectively $x'_1$) is the unique $x'_k$ with $\alpha(x'_k)=0 $ (respectively $\beta(x'_k)=0$).

There is a unique non-zero homology class of $\hfk^\infty(K_1\# K_2)$ in grading zero. Representatives of this homology class are of the form  $\sum_{k\in I, |I| \textup{ odd}} x_{i_k}\otimes x'_{j_k}$ plus a boundary, and we may assume that the boundary does not contain any terms of the form $x_i \otimes x'_j$. Note that adding such a boundary to $\sum_{k\in I} x_{i_k}\otimes x'_{j_k}$ cannot decrease the $\alpha$-- or $\beta$--filtration level of the resulting generator; it can only fix or increase the filtration level.

We will use Proposition~\ref{prop:C1C2}. The rest of the proof is done in 3 steps.

\subsection*{Step 1} We will show that
\[V_0(K)=v.\]
Indeed, the generator $x_l\otimes x'_1$ is at bifiltration level $(v-s+g,v)$, hence $V_0(K)\le v$. We will show that for all other elements of type $x_i\otimes x'_j$ we have 
$\max(\alpha(x_i\otimes x'_j),\beta(x_i\otimes x'_j))\ge v$. Suppose $i\le l$. Then $\alpha(x_i)\le v-s$, but $\beta(x_i)\ge v$.
Then $\beta(x_i\otimes x'_j)\ge v$ for all $j$. If $i>l$, an analogous argument shows that $\alpha(x_i\otimes x'_j)\ge v$.

\subsection*{Step 2.} We want to show that (C1) of Proposition \ref{prop:C1C2} is impossible. We aim 
to show that no element of type $\sum_{k\in I, |I| \textup{ odd}}x_{i_k}\otimes x'_{j_k}$ plus a boundary has bifiltration level $(v,v)$.

We first show that if an element of the form $x_i \otimes x'_j$ has $\max(\alpha(x_i \otimes x'_j), \beta(x_i \otimes x'_j)) \le v$, then $i=l $ or $l+1$. Consider $x_i \otimes x'_j$, and suppose that $i <l $. Then $\beta(x_i)\ge v+1$ and so $\beta( x_{i}\otimes x'_{j}) \ge v+1$. An analogous argument works if $i>l+1$. Hence $i=l$ or $l+1$. Note that if $i=l$, then $\beta(x_i \otimes x'_j) \le v$ if and only if $\beta(x'_j)=0$. Analogously, if $i=l+1$,  then $\alpha(x_i \otimes x'_j) \le v$ if and only if $\alpha(x'_j)=0$. In particular, if $x_i \otimes x'_j$ has $\max(\alpha(x_i \otimes x'_j), \beta(x_i \otimes x'_j)) \le v$, then $x_i \otimes x'_j$ is either $x_l \otimes x'_1$ or $x_{l+1} \otimes x'_0$. 

We have the following lemma, whose proof is given below. 

\begin{lemma}\label{lem:zerogr}
If $z \in \cfk^\infty_0(K_1 \# K_2)$ is a boundary, then either $z = x_l \otimes x'_1 + x_{l+1} \otimes x'_0$ or $\max(\alpha(z), \beta(z)) > v$.
\end{lemma}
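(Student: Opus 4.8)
The plan is to classify which grading-$0$ generators of $\cfk^\infty(K_1\#K_2)$ can occur as terms of a boundary $z$ with $\max(\alpha(z),\beta(z))\le v$, and then to use a K\"unneth/homology argument to pin down the exact combination. Since the $\alpha$-- and $\beta$--filtration level of a sum is the maximum of the levels of its terms, the hypothesis $\max(\alpha(z),\beta(z))\le v$ forces \emph{every} generator appearing in $z$ to lie in the ``box'' $\{\alpha\le v,\ \beta\le v\}$. Thus the whole question reduces to understanding the grading-$0$ generators in this box and deciding which boundaries they span.

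For the classification I would first record the elementary fact that in any staircase every $y$--type generator $y_j$ satisfies $\alpha(y_j)\ge 1$ and $\beta(y_j)\ge 1$, since $\alpha(y_j)=\alpha(x_{j+1})>\alpha(x_0)=0$ and $\beta(y_j)=\beta(x_j)>\beta(x_{n})=0$. A general grading-$0$ generator of $\cfk^\infty(K_1\#K_2)$ has the form $U^a(\xi\otimes\eta)$, where $\xi$ is an $x$-- or $y$--type generator of $St(K_1)$, $\eta$ is a tensor of staircase generators of the summands $J_i$, and $2a$ is the total number of $y$--factors. Writing $p_2$ for the number of $y$--factors in $\eta$, each such factor contributes at least $1$ to both $\alpha(\eta)$ and $\beta(\eta)$, so $\alpha(\eta),\beta(\eta)\ge p_2\ge a$; hence $\alpha(\eta)-a\ge 0$ and $\beta(\eta)-a\ge 0$. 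Feeding this into the estimates $\beta(x_i)\ge v+1$ for $i<l$ and $\alpha(x_i)\ge v+1$ for $i>l+1$, which follow from Corollary~\ref{cor:easy} and the monotonicity of the staircase, I would conclude that the $K_1$--factor $\xi$ of any box generator must be one of $x_l$, $x_{l+1}$, $y_l$.

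The main obstacle is the $U$--power bookkeeping in these three cases, and especially the case $\xi=y_l$. A short computation with the same inequalities shows that $\xi=x_l$ forces $a=0$ and $\eta=x'_1$ (the unique $x'$--generator with $\beta=0$), so $z$ can only involve $x_l\otimes x'_1$; symmetrically $\xi=x_{l+1}$ yields only $x_{l+1}\otimes x'_0$. If $\xi=y_l$, then $\alpha(y_l)=\beta(y_l)=v$ forces $a=1$, a single $y$--factor of $\eta$ sitting at bifiltration $(1,1)$, and \emph{all other tensor factors at $(0,0)$}; since a nontrivial staircase has no generator at $(0,0)$, this is impossible once two or more summands $J_i$ are nontrivial. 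In that case the only box generators are $x_l\otimes x'_1$ and $x_{l+1}\otimes x'_0$. The lone remaining situation, $K_2$ a single knot, is handled separately: there every grading-$1$ generator is of the form $y_i\otimes x'_j$ or $x_i\otimes y'_j$, whose differentials are pure $x\otimes x'$ sums, so the exceptional generator $U(y_l\otimes y')$ can never appear as a term of any boundary. In all cases a boundary $z$ lying in the box is supported on $\{x_l\otimes x'_1,\ x_{l+1}\otimes x'_0\}$.

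Finally I would conclude with homology. Both $x_l\otimes x'_1$ and $x_{l+1}\otimes x'_0$ are cycles, and under the K\"unneth isomorphism $\hfk^\infty(K_1\#K_2)\cong\hfk^\infty(K_1)\otimes\hfk^\infty(K_2)$ each represents the nonzero grading-$0$ generator, so neither is itself a boundary. Since $[x_l]=[x_{l+1}]$ in $\hfk^\infty(K_1)$ and $[x'_0]=[x'_1]$ in $\hfk^\infty(K_2)$, the two classes coincide, whence their sum $x_l\otimes x'_1+x_{l+1}\otimes x'_0$ \emph{is} a boundary. Therefore the only nonzero boundary supported on these two generators is their sum, giving $z=x_l\otimes x'_1+x_{l+1}\otimes x'_0$ and completing the proof (the degenerate case $z=0$ being excluded).
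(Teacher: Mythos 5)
Your proposal is correct and takes essentially the same route as the paper's proof: both arguments play the number of $y$--type tensor factors (each contributing at least $1$ to both filtration levels) against the compensating powers of $U$ to show that every grading-zero term other than $x_l\otimes x'_1$, $x_{l+1}\otimes x'_0$, and the all--$y$ term $U(y_l\otimes y')$ has a filtration level exceeding $v$, then exclude the all--$y$ term because every term of a boundary must contain an $x$--type factor, and finally use that a sum of $x\otimes x'$ generators is a boundary iff it has an even number of terms. Your reorganization (first pinning the $K_1$--factor to $x_l$, $x_{l+1}$ or $y_l$, then solving the filtration constraints case by case) is only a cosmetic variant of the paper's induction on the count $N$ of $y$--factors.
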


\noindent We have that $(\alpha(x_l \otimes x'_1), \beta(x_l \otimes x'_1)) = (v-s+g, v)$. By Lemma \ref{lem:zerogr}, if we add a boundary (other than $x_l \otimes x'_1+x_{l+1} \otimes x'_0$) of grading zero to $x_l \otimes x'_1$, then either the $\alpha$-- or $\beta$--filtration level of the result will be strictly larger than $v$. A similar argument applies to $x_{l+1} \otimes x'_0$. Hence no generator for $HKF^\infty_0(K_1 \# K_2)$  has bifiltration level $(v,v)$. On the other hand (C1) implies existence of such a generator. Therefore (C1) cannot hold.

\subsection*{Step 3.} 
We will now show that (C2) does not happen. Indeed, the result follows from the following lemma:

\begin{lemma}\label{lem:onegr}
If $z \in \cfk^\infty_1(K_1 \# K_2)$, then $\max(\alpha(z), \beta(z)) > v$.
\end{lemma}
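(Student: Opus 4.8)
The plan is to prove Lemma~\ref{lem:onegr} by analyzing the $\alpha$-- and $\beta$--filtration levels of an arbitrary element $z$ of homological grading $1$ in $\cfk^\infty(K_1\# K_2) = \cfk^\infty(K_1)\otimes\cfk^\infty(K_2)$. Since the $x$--type generators of the two staircases live in grading $0$ and the $y$--type generators in grading $1$, a basis for grading $1$ of the tensor product consists of elements of the form $y_i \otimes x'_j$ (where $y_i$ is a $y$--type generator of $St(K_1)$ and $x'_j$ is an $x$--type product generator of $K_2$) together with elements $x_i \otimes w$, where $w$ is a grading-$1$ element of $\cfk^\infty(K_2)$. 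It suffices to show that \emph{every} basis element of grading $1$ has $\max(\alpha,\beta) > v$, since the filtration level of any sum $z$ of such elements is the maximum of the filtration levels of its summands, hence also exceeds $v$.

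First I would treat the summands coming from $y$--type generators of $K_1$. The key input is that the $y$--type generators $y_i$ of $St(K_1)$ are the ``$7$''--shaped corners of the staircase, sitting strictly outside the rectangle $[0,v]\times[0,v]$: using Corollary~\ref{cor:easy} together with the fact that $n$ is odd and $s\ge 1$, I would verify that for the middle $y$--type generator $y_l$ we have $\alpha(y_l)=\beta(y_l)=v$, and that all other $y$--type generators have either $\alpha > v$ or $\beta > v$. Actually, since $y_l$ itself is at level $(v,v)$ and each product generator $x'_j$ of $K_2$ satisfies $0\le \alpha(x'_j),\beta(x'_j)\le g$ with at least one coordinate positive unless $x'_j = x'_0$ or $x'_1$, tensoring pushes the filtration level strictly above $v$; the edge cases $y_l\otimes x'_0$ and $y_l\otimes x'_1$ give level $(v, v+g)$ and $(v+g, v)$ respectively, which again exceed $v$ because $g\ge 1$. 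For the non-middle $y_i$ the strict inequality $\max(\alpha(y_i),\beta(y_i))>v$ already holds on the $K_1$ factor and only increases upon tensoring.

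Next I would handle summands of the form $x_i \otimes w$ with $w$ in grading $1$ of $\cfk^\infty(K_2)$, i.e.\ $w$ is a $y'$--type generator (or more generally a grading-$1$ element of the staircase product for $K_2$). Here the argument from Step~1 of the main proof is reused: for $i\le l$ we have $\beta(x_i)\ge v$, and in fact the relevant bound is that for \emph{every} $x$--generator $x_i$ of $K_1$, the larger of its two coordinates is at least $v$, with equality only at $x_l$ and $x_{l+1}$. Combined with the observation that a grading-$1$ generator $w$ of $K_2$ has at least one strictly positive filtration coordinate (the $y'$--type corners of a staircase cannot sit at the origin in either coordinate), tensoring again forces $\max(\alpha,\beta)>v$. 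I would organize this as a short case distinction on $i<l$, $i>l+1$, and $i\in\{l,l+1\}$, paralleling Step~2.

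The main obstacle I anticipate is the bookkeeping for the genus-$g$ connected-sum factor $K_2 = J_1\#\cdots\#J_m$: the grading-$1$ elements $w$ of $\cfk^\infty(K_2)$ are not simply single $y'$--generators but tensor products in which exactly one factor is a $y$--generator of some $J_i$ and the rest are $x$--generators, so I must confirm that such $w$ always has a strictly positive $\alpha$-- or $\beta$--coordinate and, more delicately, that the boundary indeterminacy (adding a boundary to $z$) cannot lower the filtration level below $v$ --- this last point follows from the same monotonicity remark used in the main proof, namely that adding a boundary can only fix or raise the filtration levels. Once these filtration estimates for the $K_2$ factor are pinned down, the lemma follows by taking the maximum over all summands of $z$.
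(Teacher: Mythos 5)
Your overall strategy --- reduce to showing that every basis element of $\cfk^\infty_1(K_1\# K_2)$ has $\max(\alpha,\beta)>v$, using Corollary~\ref{cor:easy} for the middle generators of $St(K_1)$ and positivity of the filtration coordinates of $y$--type generators --- is the same as the paper's. But your description of the grading--$1$ basis is incorrect, and this creates a genuine gap. Because $\cfk^\infty$ is a module over $\F[U,U^{-1}]$ and $U$ has grading $-2$, a basis element of grading $1$ in $\cfk^\infty(J_0\#J_1\#\cdots\#J_m)$ (with $J_0=K_1$) has the form $U^{(N-1)/2}\,z^0\otimes z^1\otimes\cdots\otimes z^m$ where $N$, the number of $y$--type factors, can be any \emph{odd} number, not necessarily $1$. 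Your list omits all terms with $N\ge 3$; in particular it omits every element of the form $y_i\otimes u$ where $u$ is a grading--$0$ element of $\cfk^\infty(K_2)$ containing a positive (even) number of $y'$--factors, such as $y_l\otimes U(y'_{j}\otimes y'_{k}\otimes\cdots)$. Your ``main obstacle'' paragraph even asserts explicitly that grading--$1$ elements of $\cfk^\infty(K_2)$ are products with exactly one $y$--factor, which is false as soon as $m\ge 2$ for the $K_2$ side and as soon as $m\ge 1$ for the mixed terms above. The paper's proof devotes a separate case to exactly this: for $N\ge3$ it shows that the untwisted product has $\max(\alpha,\beta)\ge v+N-1$ (if $z^0$ is of $y$--type) or $\ge v+N$ (if $z^0$ is of $x$--type), and that dividing by $U^{(N-1)/2}$ still leaves $\max(\alpha,\beta)\ge v+\frac{N-1}{2}>v$. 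The estimates do survive these extra cases, but as written your argument does not cover them.

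A smaller but real imprecision: in the case $x_l\otimes w$ with $x_l$ at bifiltration level $(v-s,v)$, it is not enough that $w$ have ``at least one strictly positive filtration coordinate'' --- if $\alpha(w)>0$ but $\beta(w)=0$ and $\alpha(w)\le s$, the product sits at level $(v-s+\alpha(w),v)$, whose maximum is exactly $v$. What you actually need, and what is true (each $y$--type factor contributes at least $1$ to \emph{both} coordinates, and the power of $U$ subtracts only $\frac{N-1}{2}<N$), is $\min(\alpha(w),\beta(w))\ge1$ for every grading--$1$ element $w$ of $\cfk^\infty(K_2)$; this is the form of the estimate the paper uses. Finally, the remark about boundary indeterminacy is superfluous here: the lemma is a statement about all chains of grading $1$, not about homology representatives.
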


\noindent The proof of Lemma~\ref{lem:onegr} is given at the end of this section. The lemma implies that 
there is no $z$ with $\max(\alpha(z), \beta(z)) \le v$ such that $\partial z = (1+\iota)x$ for $x$ a generator of homology with 
$\max(\alpha(x), \beta(x))=v$, and thus (C2) is impossible. This concludes the proof that the knot $K$ is not involutively simple.

\begin{proof}[Proof of Lemma~\ref{lem:zerogr}]
As above, let $K_2 = J_1 \# \dots \# J_m$ and for notational convenience, we rewrite $K_1$ as $J_0$. Then $K_1 \# K_2 = J_0 \# J_1 \# \dots \# J_m$. An element of $\cfk^\infty_0(J_0 \# J_1 \# \dots \# J_m)$ is a sum of elements of the form  $U^\frac{N}{2} z^0 \otimes z^1 \otimes \dots \otimes z^m$, where each $z^i$ is either a $x$--type or $y$--type generator of $\cfk^\infty(J_i)$ and $N$ even is the number of $y$--type generators in the product. Let $\boldsymbol{z}$ denote $z^0 \otimes z^1 \otimes \dots \otimes z^m$.

We first consider the case when all of the $z^i$ are $x$--type generators, i.e., $N=0$. A sum of elements of the form $x^0_{i_0} \otimes \dots \otimes x^m_{i_m}$ is a boundary if and only if the number of terms is even. Recall from above that $\{x'_k\}$ is the set consisting of the products $x^1_{j_1} \otimes \dots \otimes x^m_{j_m}$, ordered such that $x'_0=x^1_0 \otimes \dots \otimes x^m_0$ and $x'_1=x^1_{n_1} \otimes \dots \otimes x^m_{n_m}$. Recall that $\alpha(x'_k)$ and $\beta(x'_k)$ are both non-negative, and $x'_0$ (respectively $x'_1$) is the unique $x'_k$ with $\alpha(x'_k)=0 $ (respectively $\beta(x'_k)=0$). Also, recall that $(\alpha(x^0_l), \beta(x^0_l))=(v-s,v)$ and $(\alpha(x^0_{l+1}), \beta(x^0_{l+1}))=(v,v-s)$ and that $\max(\alpha(x^0_i),\beta(x^0_i))>v$ for $i \neq l, l+1$. It follows that if $N=0$, the only boundary with $\max(\alpha,\beta)\le v$ is $x_l \otimes x'_1 + x_{l+1} \otimes x'_0$.

Next, we consider the general case where a positive even number $N$ of the $z^i$ are of $y$--type. If $z^0$ is of $y$--type, then $\max(\alpha(z^0), \beta(z^0)) \ge v$, with equality if and only if $z^0=y^0_l$, and $N-1$ of $z^1, \dots, z^m$ are of $y$--type. Since $y$--type generators satisfy $\min(\alpha(y), \beta(y)) \ge 1$, and $x$--type generators satisfy $\min(\alpha(x), \beta(x)) \ge 0$, it follows that a product $\boldsymbol{z}$ of $y^0_i$ with $N-1$ $y$--type generators (and the remaining $m-N+1$ factors of $x$--type) will have $\max(\alpha(\boldsymbol{z}), \beta(\boldsymbol{z})) \ge v+N-1$. Multiplication by $U$ lowers both $\alpha$ and $\beta$ by $1$ and so $\max(\alpha(U^\frac{N}{2}\boldsymbol{z}), \beta(U^\frac{N}{2}\boldsymbol{z}))\ge v-1+\frac{N}{2} \geq v$. If the second inequality is strict, we are done. If the second inequality is actually an equality, then we must have that $z^0=y^0_l$ and $N=2$. Now, if $z^0=y^0_l$, then $(\alpha(z^0), \beta(z^0))=(v,v)$. If $N=2$, then $m \ge 2$, since there must be at least one factor of $x$--type in order for $z^0 \otimes z^1 \otimes \dots \otimes z^m$ to be a term in a boundary. Since $\min(\alpha(y), \beta(y)) \ge 1$ and $\max(\alpha(x),\beta(x))>0$, it follows that $\max(\alpha(\boldsymbol{z}), \beta(\boldsymbol{z})) \ge v+N$, equivalently $\max(\alpha(U^\frac{N}{2}\boldsymbol{z}), \beta(U^\frac{N}{2}\boldsymbol{z}))\ge v+\frac{N}{2}$, as desired.

If $z^0$ is of $x$--type, then $\max(\alpha(z^0), \beta(z^0)) \ge v$, and $N$ of $z^1, \dots, z^m$ are of $y$--type. Since $\min(\alpha(y), \beta(y)) \ge 1$, it follows that $\max(\alpha(\boldsymbol{z}), \beta(\boldsymbol{z})) \ge v+N$, hence $\max(\alpha(U^\frac{N}{2}\boldsymbol{z}), \beta(U^\frac{N}{2}\boldsymbol{z}))\ge v+\frac{N}{2}$.
\end{proof}

\begin{proof}[Proof of Lemma \ref{lem:onegr}]
Again, let $K_2 = J_1 \# \dots \# J_m$ and $K_1=J_0$ so that $K_1 \# K_2 = J_0 \# J_1 \# \dots \# J_m$. An element of $\cfk^\infty_1(J_0 \# J_1 \# \dots \# J_m)$ is a sum of elements of the form  $U^\frac{N-1}{2} z^0 \otimes z^1 \otimes \dots \otimes z^m$, where each $z^i$ is either a $x$--type or $y$--type generator of $\cfk^\infty(J_i)$ and $N$ odd is the number of $y$--type generators in the product. As before, let $\boldsymbol{z}$ denote $z^0 \otimes z^1 \otimes \dots \otimes z^m$.

We first consider the case when exactly one $z^i$ is a $y$--type generator. Consider $y^0_1, \dots, y^0_{n-1}$, the $y$--type generators of $\cfk^\infty(J_0)$. Suppose that $z^0 \in \{y^0_1, \dots, y^0_{n-1} \}$. Note that for $i \neq l,l+1$, we have $\max(\alpha(y^0_i), \beta(y^0_i)) > v$ and we are done. Thus, suppose $z^0=y^0_l$. (The case $z^0=y^0_{l+1}$ is analogous.) Recall that $(\alpha(y^0_l),\beta(y^0_l))=(v, v)$. Since $x$--type generators satisfy $\max(\alpha(x), \beta(x)) > 0$ and $\min(\alpha(x), \beta(x)) \ge 0$, we have that the product $y^0_l \otimes x^1_{i_1} \otimes \dots \otimes x^m_{i_m}$ satisfies $\max(\alpha, \beta)>v$, as desired.
We now consider the case where $z^0 \in \{ x^0_0, \dots, x^0_{n}\}$. For $i \neq l, l+1$, we have that $\max(\alpha(x^0_i), \beta(x^0_i)) > v$. Hence $z^0$ must be either $x^0_l$ or $x^0_{l+1}$. We consider the case when $z^0=x^0_l$; the other case is similar. Recall that $(\alpha(x^0_l), \beta(x^0_l))=(v-s, v)$. Recall that for a $y$--type generator, we have that $\min(\alpha(y), \beta(y)) \ge 1$, and for a $x$--type generator, $\min(\alpha(x), \beta(x)) \ge 0$. Exactly one of $z^1, \dots, z^m$ is of $y$--type, and so
\[ \max(\alpha(x^0_l \otimes z^1 \otimes \dots \otimes z^m), \beta(x^0_l \otimes z^1 \otimes \dots \otimes z^m)) > v. \]

We now consider the general case where an odd number $N$ of the $z^i$ are of $y$--type and $N\ge3$. If $z^0$ is of $y$--type, then we have $\max(\alpha(z^0), \beta(z^0)) \ge v$ and $N-1$ of $z^1, \dots, z^m$ are of $y$--type. Since $y$--type generators satisfy $\min(\alpha(y), \beta(y)) \ge 1$, and $x$--type generators satisfy $\min(\alpha(x), \beta(x)) \ge 0$, it follows that a product of $y^0_i$ with $N-1$ $y$--type generators (and the remaining factors of $x$--type) will have $\max(\alpha(\boldsymbol{z}), \beta(\boldsymbol{z})) \ge v+N-1$. Multiplication by $U$ lowers both $\alpha$ and $\beta$ by $1$ and so $\max(\alpha(U^\frac{N-1}{2}\boldsymbol{z}), \beta(U^\frac{N-1}{2}\boldsymbol{z}))\ge v+\frac{N-1}{2}$.

If $z^0$ is of $x$--type, then $\max(\alpha(z^0), \beta(z^0)) \ge v$ and $N$ of $z^1, \dots, z^m$ are of $y$--type. Then the product of $x^1_i$ with $N$ generators of $y$--type will have $\max(\alpha(\boldsymbol{z}), \beta(\boldsymbol{z})) \ge v+N$. It follows that $\max(\alpha(U^\frac{N-1}{2}\boldsymbol{z}), \beta(U^\frac{N-1}{2}\boldsymbol{z}))\ge v+\frac{N+1}{2}$.
\end{proof}

%
%
%
%
%
%
%
%

\section{Proof of Theorem~\ref{thm:sumoftwo}}\label{sec:proofoftwo}
For the reader's convenience we recall the statement of Theorem~\ref{thm:sumoftwo}.

\newtheorem*{thm:sumoftwo}{Theorem \ref{thm:sumoftwo}}
\begin{thm:sumoftwo}
Let $K_1$ and $K_2$ be L--space knots and set $K=K_1\# K_2$. If $V_0(K)<V_0(K_1)+V_0(K_2)$, then $V_0(K)$ is not involutively simple.
\end{thm:sumoftwo}
The proof is done in nine steps and takes the remainder of the section. Throughout the proof we will assume that 
the staircase for $K_1$ is generated by elements $x_0,\ldots,x_n$, $y_0,\ldots,y_{n-1}$
such that $\partial y_i=x_i+x_{i+1}$ and the staircase for $K_2$ is generated by elements $x'_0,\ldots,x'_m$, $y'_0,\ldots,y'_{m-1}$.

\subsection*{Step 1. Structure of generators of $\cfk^\infty_0(K)$.}
We first investigate the structure of elements in $\cfk^\infty_0(K)$ which generate 
$\hfk^\infty_0(K)$.
\begin{lemma}\label{lem:step0}
Suppose $x\in\cfk^\infty_0(K)$ is a cycle and generates the homology. Then $x$ is a sum
of an odd number of products $x_i\otimes x_j'$.
\end{lemma}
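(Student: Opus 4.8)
The plan is to analyze the Künneth description of $\cfk^\infty(K) = \cfk^\infty(K_1)\otimes\cfk^\infty(K_2)$, where both factors are staircase complexes, and determine which chains in grading zero can represent the generator of $\hfk^\infty_0(K)\cong \F$. Since each staircase has generators $x_i$ (grading $0$) and $y_i$ (grading $1$), a tensor monomial $z^0\otimes z^1$ lies in grading zero exactly when both factors are $x$--type generators (both grading $0$) or both are $y$--type generators (each grading $1$, up to a power of $U$). So an arbitrary chain in $\cfk^\infty_0(K)$ is an $\F$--linear combination of terms of the form $x_i\otimes x_j'$ and of the form $U\,y_i\otimes y_j'$ (the $U$ correcting the total grading $2$ back down to $0$). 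I would first record this decomposition cleanly.

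Next I would compute the differential on such a chain and extract the homological content. The key point is that $\hfk^\infty_0(K)\cong\F$ is one--dimensional, generated (after tensoring with $\F[U,U^{-1}]$) by the class $[x_i\otimes x_j']$, and crucially \emph{every} $x_i\otimes x_j'$ represents this same nonzero class, since consecutive $x$--type generators differ by the boundary $\partial y_i = x_i+x_{i+1}$ (and similarly on the second factor). The plan is to show that a cycle $x$ generating the homology must, when expanded in the basis above, have its $x_i\otimes x_j'$--part be a sum of an \emph{odd} number of terms: an even number would be a boundary (hence homologically trivial modulo the $y\otimes y'$ part), and the $U\,y_i\otimes y_j'$ terms cannot contribute to the homology class in grading zero because they lie in the image of generators of grading $1$ and the reduced homology in grading zero is concentrated in the $x$--type sector.

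Concretely, I would argue as follows. Projecting onto the subcomplex spanned by $x$--type tensors, note that the quotient or sub--quotient computing $\hfk^\infty_0$ in the $x$--sector is $\F$ with the augmentation sending $\sum c_{ij}\,x_i\otimes x_j'\mapsto \sum c_{ij}\in\F$; this is because all $x_i\otimes x_j'$ are homologous, so a combination is a cycle representing the generator precisely when the number of terms is odd, and is a boundary precisely when it is even. The $U\,y_i\otimes y_j'$ terms, being in grading $1$ under the internal grading before the $U$--shift, contribute only to the differential and to the identification of which $x$--tensors are homologous; they cannot by themselves form part of a grading--zero cycle representing the generator. Hence the class of $x$ in $\hfk^\infty_0(K)$ is detected exactly by the parity of the number of $x_i\otimes x_j'$ summands, and generating the homology forces this parity to be odd.

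The main obstacle will be the bookkeeping in the mixed sector: I must verify carefully that the $U\,y_i\otimes y_j'$ contributions genuinely drop out of the homology class and do not secretly produce additional grading--zero cycles representing the generator, i.e.\ that no cancellation among $y\otimes y'$ terms can conspire to replace an $x$--tensor. This amounts to checking that the only homology in grading zero comes from the $x$--sector, which follows from the explicit staircase differential $\partial(y_i\otimes y_j') = (x_i+x_{i+1})\otimes y_j' + y_i\otimes(x_j'+x_{j+1}')$ and the fact that the homology of a staircase tensor staircase is the expected $\F[U,U^{-1}]$ in each grading. Once that structural fact is in place, the parity argument closes the proof.
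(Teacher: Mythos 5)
Your decomposition of $\cfk^\infty_0(K)$ into $x_i\otimes x_j'$ terms and $U\,y_k\otimes y_l'$ terms, and your parity argument (all $x_i\otimes x_j'$ are homologous via $\partial y_i = x_i+x_{i+1}$, so an even sum is a boundary and an odd sum generates), match the paper's proof. The gap is in the step you yourself flag as the main obstacle. The lemma asserts that a generating cycle is \emph{literally} a sum of $x_i\otimes x_j'$ terms, i.e.\ its $y\otimes y'$ part is zero --- and this stronger conclusion is what is used later in the paper (Step 2 of the proof of Theorem~\ref{thm:sumoftwo} needs every summand of the cycle to be of the form $x_i\otimes x_j'$ so that Theorem~\ref{thm:zemke} applies). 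Your proposed resolution, ``the only homology in grading zero comes from the $x$--sector,'' aims at the wrong target: homological invisibility of the $y\otimes y'$ terms would not preclude a generating cycle of the form (odd sum of $x_i\otimes x_j'$) plus (nonzero $y\otimes y'$ cycle). What you actually need is that there are no nonzero \emph{cycles} in the span of the $y_k\otimes y_l'$, equivalently that $\partial$ is injective there.

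The paper proves this injectivity directly, and the argument is short: write $\partial=\partial_1+\partial_2$ on the $y\otimes y'$ span, where $\partial_1$ acts on the first tensor factor and $\partial_2$ on the second. Then $\partial_1 z$ lies in the span of the $x\otimes y'$ monomials and $\partial_2 z$ in the span of the $y\otimes x'$ monomials; these are complementary summands, so $\partial z=0$ forces $\partial_1 z=\partial_2 z=0$ separately. Writing $z=\sum_j \wt{y}_j\otimes y_j'$, the equation $\partial_1 z=0$ says each $\wt{y}_j$ is a cycle in $\cfk^\infty_1(K_1)$; but the staircase differential $\partial\colon\cfk^\infty_1(K_1)\to\cfk^\infty_0(K_1)$ is injective, so $z=0$. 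Your appeal to the K\"unneth computation of $H_*$ can be made to work (via a rank count in each degree), but it is roundabout; the splitting $\partial=\partial_1+\partial_2$ gets you the needed vanishing in two lines from the explicit differential you already wrote down. With that inserted, your parity argument completes the proof.
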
 
\begin{proof}
By the definition of staircase complexes, an element in $\cfk^\infty_0$ is of the form
\[\sum_{(i,j)\in I_x} x_i\otimes x_j'+U^{-1}\sum_{(k,l)\in I_y} y_k\otimes y_l',\]
where the sums are taken over some subsets $I_x,I_y$ of indices. We know that $\partial(x_i\otimes x_j')=0$. Hence in order to check
whether an element is a cycle, we need to look at the sum of terms of type $y_k\otimes y_l'$.

Suppose $z=y_{k_1}\otimes y_{l_1}'+\ldots+y_{k_r}\otimes y_{l_r}'$ and $\partial z=0$. Write $\partial$ as the sum $\partial_1+\partial_2$,
where $\partial_1$ acts only on the first coordinate of the tensor product and $\partial_2$ acts on the second coordinate. As $\partial_1z$
is a sum of elements of type $x\otimes y'$ and $\partial_2z$ is a sum of elements of type $y\otimes x'$ it follows that $\partial z=0$
implies that $\partial_1 z=\partial_2 z=0$.

Consider the equation $\partial_1 z=0$. It implies that $z$ can be written as the sum $\wt{y}_0\otimes y_0'+\ldots+\wt{y}_m\otimes y_m'$,
where each of the $\wt{y}_j$ is a cycle in $\cfk^\infty_1(K_1)$. But $\cfk^\infty(K_1)$ is a staircase complex and 
$\partial\colon\cfk^\infty_1(K_1)\to\cfk^\infty_0(K_1)$ is injective. Therefore $\wt{y}_1=\ldots=\wt{y}_m=0$ and so $z=0$.

As a consequence, if an element $x\in\cfk^\infty_0$ is a cycle, it must be a sum of elements of type $x_i\otimes x_j'$. As each summand
is a generator of homology and a sum of generators is a generator if and only if the number of summands is odd, this concludes the proof
of the lemma.
\end{proof}

\subsection*{Step 2. Excluding the case (C1).} 
We will use Proposition~\ref{prop:C1C2}. We first exclude the possibility of case (C1). 
Suppose $x$ is a degree zero generator of the homology of 
$\cfk^\infty(K)$ such that $(1+\iota)x=0$
and $\max(\alpha(x),\beta(x))=V_0(K)$. 
By Lemma~\ref{lem:step0} $x$ is a sum of elements of type $x_i\otimes x_j'$ and the number of summands is odd.
By Theorem~\ref{thm:zemke}, $\iota x=x$ implies that there must be at least
one element of type $x_i\otimes x_j'$ which is fixed under $\iota$, that is, $\iota(x_i\otimes x_j')=x_i\otimes x_j'$.
Using Theorem~\ref{thm:zemke} again, we deduce that $2i=n$ and $2j=m$, and so both the staircases are 
even and $x_i$, $x_j'$ are middle elements of the staircases. It follows from Corollary~\ref{cor:easy}
that $x_i$ is at bifiltration level $(V_0(K_1),V_0(K_1))$ and $x_j'$ is at bifiltration level $(V_0(K_2),V_0(K_2))$. But then
$x_i\otimes x_j'$ is at bifiltration level $(V_0(K_1)+V_0(K_2),V_0(K_1)+V_0(K_2))$. However we assumed that $x$ is at bifiltration
level at most $(V_0(K),V_0(K))$. This contradicts the assumption that 
$V_0(K)<V_0(K_1)+V_0(K_2)$. So case (C1) is excluded.

\smallskip
It remains to deal with (C2). This case is more complicated and takes the remainder of the proof.

\subsection*{Step 3. Preliminaries on the case (C2).} We begin with the following lemma.
\begin{lemma}\label{lem:levelV0}
Suppose that $x$ is a generator of the homology of $\cfk^\infty_0(K)$ and $(1+\iota)x=\partial y$ for some $y=\cfk^\infty_1(K)$
such that $\max(\alpha(y),\beta(y)) \leq V_0(K)$.
Then $y$ is a sum of elements $\sum x_i\otimes y_j'+\sum y_k\otimes x_l'$ and all the summands are at bifiltration level
$(V_0(K),V_0(K))$.
\end{lemma}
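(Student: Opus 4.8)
The plan is to reduce the whole statement to a single filtration inequality for grading-one generators of $\cfk^\infty(K)$. First I would record the grading bookkeeping: since $\cfk^\infty(K)=\cfk^\infty(K_1)\otimes\cfk^\infty(K_2)$ and the $x$--type (resp.\ $y$--type) staircase generators lie in grading $0$ (resp.\ $1$), a pure tensor of staircase generators has intrinsic grading in $\{0,1,2\}$, and multiplication by $U^{\pm1}$ changes grading by an even number. Hence the only homogeneous elements of $\cfk^\infty_1(K)$ are $\F$--linear combinations of $x_i\otimes y_j'$ and $y_k\otimes x_l'$ (no $x_i\otimes x_j'$ or $y_k\otimes y_l'$ term can be shifted into grading one). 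This already yields the asserted form $y=\sum x_i\otimes y_j'+\sum y_k\otimes x_l'$. Since the $\alpha$-- (resp.\ $\beta$--) filtration level of a sum is the maximum of the levels of its summands, the hypothesis $\max(\alpha(y),\beta(y))\le V_0(K)$ is equivalent to the statement that \emph{every} summand $g$ satisfies $\alpha(g)\le V_0(K)$ and $\beta(g)\le V_0(K)$, i.e.\ $\max(\alpha(g),\beta(g))\le V_0(K)$.

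The crux is then the claim that every grading-one generator $g$ of $\cfk^\infty(K)$ satisfies $\max(\alpha(g),\beta(g))\ge V_0(K)$, with equality only if $\alpha(g)=\beta(g)=V_0(K)$. Granting this, each summand of $y$ has $\max(\alpha(g),\beta(g))=V_0(K)$ and therefore bifiltration level exactly $(V_0(K),V_0(K))$, which is the conclusion. I note in passing that the relation $(1+\iota)x=\partial y$ plays no role in the conclusion itself; it is only what produces the element $y$ to which the lemma is applied.

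To prove the claim for $g=x_i\otimes y_j'$, I would use the staircase identities $\alpha(y_j')=\alpha(x_{j+1}')$, $\beta(y_j')=\beta(x_j')$, together with the strict monotonicities $\alpha(x_j')<\alpha(x_{j+1}')$ and $\beta(x_{j+1}')<\beta(x_j')$ built into the definition of a staircase. These give
\[\alpha(g)=\alpha(x_i\otimes x_{j+1}'),\qquad \beta(g)=\beta(x_i\otimes x_j'),\]
with $\alpha(x_i\otimes x_{j+1}')>\alpha(x_i\otimes x_j')$ and $\beta(x_i\otimes x_j')>\beta(x_i\otimes x_{j+1}')$. Both $x_i\otimes x_j'$ and $x_i\otimes x_{j+1}'$ are grading-zero generators of $\hfk^\infty(K)$, so each has $\max(\alpha,\beta)\ge V_0(K)$ by \eqref{eq:v0}. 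A short case check now finishes the argument: if $\alpha(g)<V_0(K)$, then the neighbour $x_i\otimes x_{j+1}'$ must realize its maximum in $\beta$, so $\beta(x_i\otimes x_{j+1}')\ge V_0(K)$ and hence $\beta(g)>\beta(x_i\otimes x_{j+1}')\ge V_0(K)$; symmetrically, if $\beta(g)<V_0(K)$, then $\alpha(x_i\otimes x_j')\ge V_0(K)$ and so $\alpha(g)>\alpha(x_i\otimes x_j')\ge V_0(K)$. Thus $\alpha(g)$ and $\beta(g)$ cannot both be $<V_0(K)$, giving $\max(\alpha(g),\beta(g))\ge V_0(K)$; and when this maximum equals $V_0(K)$ neither strict sub-case can occur, forcing $\alpha(g)\ge V_0(K)$ and $\beta(g)\ge V_0(K)$, hence $\alpha(g)=\beta(g)=V_0(K)$. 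The case $g=y_k\otimes x_l'$ is identical after exchanging the two tensor factors and the roles of $\alpha$ and $\beta$.

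The main obstacle is getting this last case analysis exactly right: one must correctly identify which of the two adjacent grading-zero generators controls $\alpha(g)$ and which controls $\beta(g)$, and then use the \emph{strict} monotonicity of the staircase filtrations to upgrade ``$\max(\alpha,\beta)\ge V_0(K)$ for the neighbours'' into ``$\max(\alpha,\beta)\ge V_0(K)$ for $g$, sharp only on the diagonal.'' Everything else is routine bookkeeping about gradings and staircase filtration levels.
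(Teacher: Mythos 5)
Your proof is correct and follows essentially the same route as the paper's: both arguments reduce to the observation that a grading-one summand $x_i\otimes y_j'$ has $\alpha$ equal to that of $x_i\otimes x_{j+1}'$ and $\beta$ equal to that of $x_i\otimes x_j'$, so a filtration level strictly below $V_0(K)$ would force one of these grading-zero homology generators to have both filtration levels below $V_0(K)$, contradicting \eqref{eq:v0}. The only difference is presentational: you package the computation as a standalone lower bound for all grading-one generators, while the paper runs it directly as a contradiction on a hypothetical low summand of $y$.
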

\begin{proof}
First of all, the only elements in $\cfk^\infty(K)$ of odd degree are sums $\sum x_i\otimes y_j'+\sum y_k\otimes x_l'$ multiplied
by some power of $U$. So if $x$ is at the
homological grading $0$ and $(1+\iota) x=\partial y$, then $y$ must have the form as in the hypothesis of the lemma. By condition (C2) it follows
that $\alpha(y),\beta(y)\le V_0(K)$.

Suppose there is a summand in $y$ at bifiltration level $(\alpha,\beta)$ with $\alpha<V_0(K)$. Assume it is $x_i\otimes y_j'$ 
(the other case is $y_k\otimes x_l'$). By hypothesis $\beta\le V_0(K)$. But then $x_i\otimes x'_{j+1}$ has both
filtration levels strictly less than $V_0(K)$. This contradicts the definition of $V_0(K)$ given in \eqref{eq:v0}. 
\end{proof}

\subsection*{Step 4. Construction of an auxiliary graph $\Gamma$.}
In light of Lemma \ref{lem:levelV0}, we need to investigate the possibility of the existence of $x,y$ such that $(1+\iota)x=\partial y$ and all of the terms in $y$ are
at bifiltration level $(V_0(K),V_0(K))$. Suppose 
\[y=x_{i_1}\otimes y_{j_1}'+\ldots+x_{i_r}\otimes y_{j_r}'+y_{k_1}\otimes x_{l_1}'+\ldots+y_{k_s}\otimes x_{l_s}'.\]
Let us define the following graph. For each summand of $y$ we take two vertices. For a summand $x_i\otimes y_j'$ the vertices
have labels $(i,j)$ and $(i,j+1)$ and they are connected by a green edge. For a summand $y_k\otimes x_l'$ the vertices have labels $(k,l)$
and $(k+1,l)$ and are connected by a blue edge. One thinks of the vertices of the graph as the elements appearing in the differential of $y$.

We will introduce two more types of edges: Red ones corresponding to cancellations of terms in the differential and orange ones that encode the action of $\iota$.

More specifically, 
pair up the vertices with the same label $(i,j)$. Connect the vertices in pairs by a red edge. If the number of vertices with the same label is odd,
one of the vertices is not connected by any red edge. We insist that each vertex be adjacent to at most one red edge.

Finally we draw orange edges. Suppose we have two distinct vertices with labels $(a,b)$ and $(n-a,m-b)$ for some $a$ and $b$ such that
none of them is adjacent to a red edge or to an orange edge. Then we connect them by an orange edge. 
We repeat the procedure until no more orange edges can be drawn. 
Let $\Gamma$ be the resulting graph.

\subsection*{Step 5. First properties of the graph $\Gamma$.} We prove the following result.
\begin{lemma}\label{lem:step4}
Let $y$ and $\Gamma$ be as above. Suppose that no vertex of $\Gamma$ is labelled $(\frac{n}{2},\frac{m}{2})$. Then
every vertex of $\Gamma$ is adjacent to precisely two edges (one green or blue, one red or orange).
\end{lemma}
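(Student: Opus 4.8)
The plan is to separate the two kinds of edges. By construction every vertex is created from exactly one summand of $y$ and carries exactly one green or blue edge: the green edge joins the pair $(i,j),(i,j+1)$ produced by a summand $x_i\otimes y_j'$, and the blue edge joins $(k,l),(k+1,l)$ produced by $y_k\otimes x_l'$. Hence the whole content of the statement is that every vertex is incident to exactly one red \emph{or} orange edge, and never to both. That it is never incident to both is immediate from the rules: an orange edge is only drawn between vertices not already meeting a red edge, red edges form a matching, and each vertex receives at most one orange edge. So the red/orange incidences of any vertex number $0$ or $1$, and it remains to rule out $0$.

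First I would identify the red-unmatched vertices. The labels attached to the vertices are precisely the terms occurring in the multiset expansion of $\partial y$, the multiplicity of a label $(a,b)$ being the number of summands of $y$ whose differential contributes $x_a\otimes x_b'$. Pairing equal labels by red edges leaves exactly one unmatched vertex at a label of odd multiplicity and none at a label of even multiplicity. Thus the red-unmatched vertices are in bijection with the labels $(a,b)$ for which $x_a\otimes x_b'$ survives in $\partial y=(1+\iota)x$.

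Next I would feed in Theorem~\ref{thm:zemke}. Since $x$ generates $\hfk^\infty_0(K)$, Lemma~\ref{lem:step0} lets us write $x=\sum_{(i,j)\in S}x_i\otimes x_j'$, and Theorem~\ref{thm:zemke} gives $\iota(x_i\otimes x_j')=x_{n-i}\otimes x_{m-j}'$. Hence the coefficient of $x_a\otimes x_b'$ in $(1+\iota)x$ is $[(a,b)\in S]+[(n-a,m-b)\in S]\bmod 2$. Writing $\sigma(a,b)=(n-a,m-b)$, this expression is symmetric under $(a,b)\mapsto\sigma(a,b)$, so the set of surviving labels is $\sigma$--invariant. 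At the unique fixed point $(\tfrac n2,\tfrac m2)$ of $\sigma$ the coefficient is $2\,[(\tfrac n2,\tfrac m2)\in S]\equiv 0$, and in any case the hypothesis of the lemma guarantees that no vertex carries this label. Consequently $\sigma$ acts as a fixed--point--free involution on the set of surviving labels.

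Finally I would read off the orange matching. Since there is exactly one red-unmatched vertex per surviving label, and the surviving labels are permuted without fixed points by $\sigma$, the rule joining a red-unmatched pair $(a,b),(n-a,m-b)$ by an orange edge pairs these vertices perfectly: each red-unmatched vertex has exactly one available partner and receives exactly one orange edge. Every vertex is therefore incident to exactly one red or orange edge in addition to its single green or blue edge, which is the claim. The one point demanding care, and the step I expect to be the main obstacle, is the bookkeeping that distinguishes the integer multiplicity of a label in the multiset $\partial y$ from its $\F$--coefficient: getting this parity statement right is exactly what makes the bijection between red-unmatched vertices and surviving labels valid, and hence what forces the fixed--point--freeness of $\sigma$ on them.
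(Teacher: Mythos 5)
Your proposal is correct and follows essentially the same route as the paper: both arguments reduce the claim to showing that every red-unmatched vertex finds an orange partner, identify the red-unmatched vertices with the labels surviving in $\partial y=(1+\iota)x$, and use the $\sigma$-symmetry of $(1+\iota)x$ (via Theorem~\ref{thm:zemke}) together with the exclusion of the label $(\frac{n}{2},\frac{m}{2})$ to conclude that these labels pair off without fixed points. Your write-up is somewhat more careful than the paper's about the multiplicity-versus-$\F$-coefficient bookkeeping and about why each unmatched vertex has a \emph{unique} available orange partner, but the underlying idea is the same.
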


\begin{remark}
If there is a vertex of $\Gamma$ with label $(\frac{n}{2},\frac{m}{2})$, then necessarily $n$ and $m$ are even and
there is a summand of $y$ such that $x_{n/2}\otimes x'_{m/2}$ is a summand of its differential. 
But $x_{n/2}\otimes x'_{m/2}$ is at bifiltration level
$(V_0(K_1)+V_0(K_2),V_0(K_1)+V_0(K_2))$. This contradicts the assumption that $V_0(K)<V_0(K_1)+V_0(K_2)$, because
$y$ is at bifiltration level $(V_0(K),V_0(K))$; compare the arguments in Step~2.
\end{remark}

\begin{proof}[Proof of Lemma \ref{lem:step4}]
It follows by construction that each vertex is adjacent either to one green edge or to one blue edge. Moreover, it is adjacent
to at most one orange or red edge. Suppose $(i,j)$ is the label of a vertex that is not adjacent to any orange or red edge.
It follows that $x_i\otimes x_j'$ appears in the differential of $y$,
but $x_{n-i}\otimes x_{m-j}'$ does not (unless $i=n-i$ and $j=m-j$). 
This contradicts the fact that $\partial y$ is of the form $(1+\iota)x$ for some $x$.
\end{proof}

From now on we will assume that each vertex of $\Gamma$ is adjacent to precisely two edges.
It follows that the graph $\Gamma$ splits as a disjoint union of cycles $\Gamma_1,\ldots,\Gamma_z$. We call a cycle
$\Gamma_t$ \emph{even} or \emph{odd} depending on whether the number of \emph{orange} edges is even or odd.

\subsection*{Step 6. Finding odd cycles in $\Gamma$.} We now exploit the assumption that $x$ is non-trivial in homology.
\begin{lemma}\label{lem:step5}
At least one of the cycles $\Gamma_1,\ldots,\Gamma_z$ is odd.
\end{lemma}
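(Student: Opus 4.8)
The plan is to reduce the statement to a single parity count. Since the total number of orange edges in $\Gamma$ equals $\sum_t o_t$, where $o_t$ is the number of orange edges in $\Gamma_t$, and a cycle is \emph{odd} precisely when $o_t$ is odd, it suffices to prove that the total number of orange edges in $\Gamma$ is odd. First I would record what the orange edges count. By Lemma~\ref{lem:step4} every vertex of $\Gamma$ is adjacent to exactly one red or orange edge, so the vertices not meeting a red edge are exactly those meeting an orange edge. A label appearing $k$ times among the differential terms yields $k$ vertices, which the red edges pair up in twos; hence a label leaves an unpaired (orange-adjacent) vertex precisely when $k$ is odd. Thus the unpaired vertices are in bijection with the labels occurring an odd number of times in $\partial y$, that is, with the terms of $(1+\iota)x=\partial y$. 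As each orange edge joins two such vertices, the number of orange edges equals half the number of terms of $(1+\iota)x$.

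Next I would compute that number. By Lemma~\ref{lem:step0} write $x=\sum_{(a,b)\in A}x_a\otimes x_b'$ with $A$ a set of index pairs, and recall from Theorem~\ref{thm:zemke} that $\iota(x_a\otimes x_b')=x_{n-a}\otimes x_{m-b}'$; set $\sigma(a,b)=(n-a,m-b)$, so that $\iota x=\sum_{(a,b)\in\sigma(A)}x_a\otimes x_b'$. Working over $\F$, $(1+\iota)x=\sum_{(a,b)\in A\triangle\sigma(A)}x_a\otimes x_b'$, so the number of its terms is $|A\triangle\sigma(A)|=2|A|-2|A\cap\sigma(A)|$, using $|\sigma(A)|=|A|$. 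Hence the total number of orange edges equals $|A|-|A\cap\sigma(A)|$, and it remains to show this is odd.

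Finally I would carry out the parity argument. Since $x$ generates $\hfk^\infty_0(K)$, Lemma~\ref{lem:step0} forces $|A|$ to be odd. The set $A\cap\sigma(A)$ is $\sigma$--invariant, hence a disjoint union of $\sigma$--orbits; its only possible fixed label is $(\tfrac n2,\tfrac m2)$, occurring only when $n,m$ are both even, and this label does not lie in $A$: the summand $x_{n/2}\otimes x'_{m/2}$ sits at bifiltration level $(V_0(K_1)+V_0(K_2),V_0(K_1)+V_0(K_2))$, which exceeds the level $(V_0(K),V_0(K))$ of $x$ because $V_0(K)<V_0(K_1)+V_0(K_2)$ (the same contradiction used in Step~2). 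Therefore $A\cap\sigma(A)$ is fixed-point free, so $|A\cap\sigma(A)|$ is even, and $|A|-|A\cap\sigma(A)|$ is odd. Thus the total number of orange edges is odd, so at least one $o_t$ is odd and at least one cycle $\Gamma_t$ is odd.

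The main obstacle I anticipate is the bookkeeping in the first step: establishing cleanly the two-to-one correspondence between orange edges and the terms of $(1+\iota)x$, including the subtle point that distinct vertices may carry the same label and that red edges cancel them in pairs. Once that correspondence is pinned down, the remainder is a direct parity computation, with the only delicate input being the exclusion of the central label $(\tfrac n2,\tfrac m2)$ from $A$, which is handled exactly as in Step~2 via the filtration levels supplied by Corollary~\ref{cor:easy}.
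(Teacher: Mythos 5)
Your proposal is correct and follows essentially the same route as the paper: both identify the surviving terms of $\partial y=(1+\iota)x$ with the orange-adjacent vertices, compute that $(1+\iota)x$ has $2|A|-2|A\cap\sigma(A)|$ terms (the paper writes this as $2(u-2w)$ after ordering the summands into $\iota$-conjugate pairs), and derive a parity contradiction from $|A|$ odd together with the exclusion of the central fixed label via the Step~2 filtration argument. The only difference is notational (symmetric differences versus an explicit maximal pairing), so nothing further is needed.
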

\begin{proof}
Consider $\partial y$. It is a sum of elements $x_i\otimes x'_j$. These elements correspond to vertices of $\Gamma$, and as we mentioned above,
red edges indicate elements cancelling in the differential. So the number of (non-cancelling) summands in $\partial y$ is the number of vertices not adjacent
to red edges. By Lemma~\ref{lem:step4} this is twice the number of orange edges of $\Gamma$.
Suppose the total number of the orange edges of $\Gamma$ is even. Then $\partial y$ has $4t$ summands for some $t\in\Z$. But $\partial y=(1+\iota)x$.
Now write 
\[x=x_{i_1}\otimes x_{j_1}'+\dots+x_{i_u}\otimes x_{j_u}'.\]
and assume that the summands are ordered in such a way 
that for $s=1,\ldots,w$ we have $i_{2s}=n-i_{2s-1}$ and $j_{2s}=m-j_{2s-1}$ and $w$ is maximal number
for which such ordering is possible. Note that $u$ is odd, since $x$ represents a nontrivial element in $\hfk^\infty(K)$.

If no summand of $x$ is fixed by $\iota$, then $(1+\iota)x$ will have precisely $2(u-2w)$ summands, because the first $2w$ terms of $x$
will be mutually cancelled. But since $u$ is odd, $2(u-2w)$ is not divisible by $4$, so we get a contradiction.

If there exists a summand in $x$ which is fixed by $\iota$, we repeat the argument in Step 2 to reach a contradiction.

We conclude that $\Gamma$ has an odd number of orange edges,
so at least one of the connected components must have an odd number of edges.
\end{proof}

\subsection*{Step 7. Consequences of the existence of an odd cycle.}  In the following,
if a vertex has label $(i,j)$, then we refer to $i$ as the \emph{first label} and to $j$ as the \emph{second label}.
\begin{lemma}\label{lem:step6}
If $n$ is even, then $\Gamma$ contains a vertex with label $(\frac{n}{2},j)$ for some $j$. If $n$ is odd, then $\Gamma$ contains a vertex
with label $(\frac{n-1}{2},j)$ and a vertex with label $(\frac{n+1}{2},j)$ (with the same $j$); moreover, the two vertices are connected by a blue edge.
\end{lemma}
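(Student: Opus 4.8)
The plan is to follow how the \emph{first label} of a vertex evolves as one traverses a single cyclic component of $\Gamma$, exploiting the odd cycle furnished by Lemma~\ref{lem:step5}. To organize this I would attach to a vertex with label $(i,j)$ the integer $g=2i-n$ and record the effect of each edge type. Along a green edge (joining $(i,j)$ and $(i,j+1)$) and along a red edge (joining two vertices with equal labels) the first label is unchanged, so $g$ is unchanged. Along a blue edge (joining $(k,l)$ and $(k+1,l)$) the first label moves by $\pm 1$, so $g$ moves by $\pm 2$. Along an orange edge (joining $(a,b)$ and $(n-a,m-b)$) one computes $g\mapsto 2(n-a)-n=-g$, so $g$ is negated. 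In particular $g$ keeps the parity of $n$ throughout $\Gamma$.

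Next I would run a sign-counting argument on an odd cycle $\Gamma_t$, that is, one carrying an odd number of orange edges, as provided by Lemma~\ref{lem:step5}. Traversing $\Gamma_t$ once returns $g$ to its initial value, so the number of edges along which the \emph{sign} of $g$ changes must be even. Green and red edges leave $g$ fixed and hence never change its sign, while each orange edge negates $g$ and so changes its sign whenever $g\neq 0$. Since $\Gamma_t$ has an odd number of orange edges, these alone would force an odd number of sign changes; to restore an even total, either some blue edge must reverse the sign of $g$, or some vertex must have $g=0$.

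I would then split according to the parity of $n$. If $n$ is even, every value of $g$ is even; a blue edge shifts $g$ by $\pm 2$, and among even integers a step of $2$ cannot join a strictly positive value to a strictly negative one, so a blue edge can reverse the sign of $g$ only if one of its endpoints satisfies $g=0$. In either case a vertex of $\Gamma_t$ has $g=0$, i.e. first label $n/2$, which is the desired vertex $(n/2,j)$. If $n$ is odd, every value of $g$ is odd and therefore never zero; a blue edge reverses the sign of $g$ precisely when it connects a vertex with $g=-1$ to one with $g=+1$. The parity argument forces at least one such blue edge, and it joins a vertex with first label $(n-1)/2$ to one with first label $(n+1)/2$; since a blue edge preserves the second label, these two vertices carry the same $j$, which is exactly the assertion.

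The only genuinely delicate point, and the main obstacle, is the sign bookkeeping for blue edges: one must check that, under the fixed parity of $g$, a jump of size $\pm 2$ can reverse the sign of $g$ only through the value $g=0$ (the even case, yielding the sought vertex) or through the transition $-1\leftrightarrow+1$ (the odd case, yielding the sought blue edge). Everything else --- the constancy of $g$ along green and red edges, the negation along orange edges, and the requirement of an even number of sign changes to close the cycle --- is routine once the invariant $g=2i-n$ is in place. I would also note, for use later, that in the even case the vertex produced cannot simultaneously have second label $m/2$, by the Remark following Lemma~\ref{lem:step5}, although this refinement is not needed for the present statement.
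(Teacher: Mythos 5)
Your proposal is correct and follows essentially the same route as the paper: both arguments count, around the odd cycle supplied by Lemma~\ref{lem:step5}, how many edges cross the threshold $i=\frac{n}{2}$ in the first label (your sign changes of $g=2i-n$ are exactly the paper's crossings), observe that this count must be even for a closed cycle while every orange edge crosses and no green or red edge does, and conclude that a blue edge or a vertex at the threshold must absorb the parity discrepancy. The bookkeeping via $g$ and the case split on the parity of $n$ match the paper's treatment, so there is nothing further to reconcile.
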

\begin{proof}
Suppose $n$ is even and let $\Gamma_1$ be a cycle with an odd number of orange edges.  Count the edges 
connecting vertices with the first label less than $\frac{n}{2}$ on one side and greater than or equal
to $\frac{n}{2}$ on the other side.
Each orange edge enters the count and no green or red edge does, because the first labels of the vertices at both ends of a green or red edge are the same. 
Suppose no vertex of $\Gamma_1$ has
label of type $(\frac{n}{2},j)$ (for any $j$).  
This assumption implies that no blue edge counts either, because a blue edge connects vertices whose first labels differ by $1$. 
But the total number of edges that `cross
the value $\frac{n}{2}$' must be even, because $\Gamma_1$ is a cycle. This is a contradiction.

The proof for $n$ odd is essentially the same; we count edges connecting vertices having the first label less than $\frac{n}{2}$ on one
side and greater than $\frac{n}{2}$ on the other. The number of such edges must be even because $\Gamma_1$ is a cycle. As
the total number of orange edges is odd, at least one blue edge must enter the count. This blue edge must connect two
vertices with first labels $\frac{n-1}{2}$ and $\frac{n+1}{2}$, respectively. However blue edges connect vertices with the same second label,
hence $j$ is the same for both vertices.
\end{proof}

\subsection*{Step 8. Finishing the proof if $n$ is even.}

If $n$ is even, then by Lemma~\ref{lem:step6} the differential $\partial y$ contains an element $x_{n/2}\otimes x_{j}'$ for some $j$.
Moreover, by Corollary~\ref{cor:easy}, 
$x_{n/2}$ is at bifiltration level $(V_0(K_1),V_0(K_1))$. No matter what $j$ is, either $\alpha(x_{j}')\ge V_0(K_2)$
or $\beta(x_{j}')\ge V_0(K_2)$. This implies that $y$ has a summand with either $\alpha$-- or $\beta$-- filtration level
at least $V_0(K_1)+V_0(K_2)$. But $y$ was at bifiltration level $(V_0(K),V_0(K))$. This contradict the assumption
that $V_0(K)<V_0(K_1)+V_0(K_2)$.

\subsection*{Step 9. Finishing the proof if $n$ is odd.}

Lemma~\ref{lem:step6} implies that for some $j$ the elements $x_{(n-1)/2}\otimes x_{j}'$ and $x_{(n+1)/2}\otimes x_{j}'$ are present in $\partial y$. The
fact that the corresponding vertices in the graph were connected by a blue edge implies that $y_{(n-1)/2}\otimes x_j'$ is a summand
of $y$. But $y_{(n-1)/2}$ is at bifiltration level $(V_0(K_1),V_0(K_1))$ and $x_{j}'$ has at least one filtration level greater or
equal to $V_0(K_2)$. Then $y_{(n-1)/2}\otimes x_j'$ has at least one filtration level greater than or equal to $V_0(K_1)+V_0(K_2)$.
By Lemma~\ref{lem:levelV0} this is a contradiction with the fact that $V_0(K)<V_0(K_1)+V_0(K_2)$.

\section{Proof of Theorem~\ref{thm:sumoftwoov}}\label{sec:proofovv}
For the reader's convenience we repeat the statement of Theorem~\ref{thm:sumoftwoov}.

\newtheorem*{thm:sumoftwoov}{Theorem \ref{thm:sumoftwoov}}
\begin{thm:sumoftwoov}
Suppose $K$ is a connected sum of two L--space knots. Then $\ovV_0(K)=V_0(K)$.
\end{thm:sumoftwoov}
The proof takes the remaining part of the section.

Write $K=K_1\# K_2$. Let $x_0,\ldots,x_n$, respectively $x_0',\ldots,x_{n'}'$ be the $x$--type generators of
$St(K_1)$, respectively of $St(K_2)$.
The $y$--type generators are denoted, respectively, by $y_0,\ldots,y_{n-1}$ and $y'_0,\ldots,y'_{n'-1}$. Assume that $\ovV_0(K)<V_0(K)$.

Any element $Q(x_{i_0}\otimes x_{j_0}')\in AI^+$ generates the second tower, in the sense that $Q(x_{i_0}\otimes x_{j_0}')$ is in the image of $U^n$ for all $n \in \N$. So let us take an element $x=x_{i_0} \otimes x_{j_0}'$. Any element $z\in AI^+$ generating the second tower must be homologous to $Qx$.
By \eqref{eq:ovddef}, $\ovV_0(K)$ is the minimum of $\max(\alpha(z),\beta(z))$ taken over all the elements $z\in AI^+$ homologous to $Qx$.
Therefore, in order to see whether $\ovV_0(K)<V_0(K)$ we need to 
check whether there exists an element $z\in AI^+$ homologous to $Qx$ such that $\max(\alpha(z),\beta(z))<V_0(K)$. 
The condition that $z$ be homologous to $Qx$ means that there exists $y\in AI^+$ such that $\di y=z+Qx$. Write $y=y_{(0)}+Qy_{(1)}$
and $z=z_{(-1)}+Qz_{(0)}$. We have $y_{(1)}\in\cfk^\infty_1(K)$, $z_{(0)},y_{(0)}\in\cfk^\infty_0(K)$ and $z_{(-1)}\in\cfk^\infty_{-1}(K)$. The
condition $\di y=z+Qx$ translates into $\partial y_{(0)}=z_{(-1)}$ and $\partial y_{(1)}+(1+\iota)y_{(0)}=z_{(0)}+x$.

\begin{lemma}\label{lem:z0is0}
If $\ovV_0(K)<V_0(K)$, then  $z_{(0)}=0$.
\end{lemma}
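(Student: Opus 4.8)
The plan is to argue by contradiction: assume $\ovV_0(K) < V_0(K)$ and let $z = z_{(-1)} + Q z_{(0)}$ be a representative of the second tower realizing this minimum, as in the setup. First I would record the chain-level equations. Since $z$ is a cycle, $\di z = 0$ forces $\partial z_{(-1)} = 0$ and $\partial z_{(0)} = (1+\iota)z_{(-1)}$, while $\di y = z + Qx$ gives $\partial y_{(0)} = z_{(-1)}$ and $\partial y_{(1)} + (1+\iota)y_{(0)} = z_{(0)} + x$, exactly the relations already extracted. Next I would read off filtration bounds from the hypothesis: because $\max(\alpha(z),\beta(z)) = \ovV_0(K) < V_0(K)$ and $Q$ preserves both filtrations, the summand $Q z_{(0)}$ gives $\max(\alpha(z_{(0)}),\beta(z_{(0)})) < V_0(K)$, and likewise $\max(\alpha(z_{(-1)}),\beta(z_{(-1)})) < V_0(K)$. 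Here I would also use that $\iota$ is skew-filtered, so it never increases $\max(\alpha,\beta)$.

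The central gadget is the grading-zero element $w := z_{(0)} + (1+\iota)y_{(0)}$. Using that $\iota$ is a chain map together with $z_{(-1)} = \partial y_{(0)}$, one computes $\partial w = \partial z_{(0)} + (1+\iota)\partial y_{(0)} = (1+\iota)z_{(-1)} + (1+\iota)z_{(-1)} = 0$, so $w$ is a cycle. The second chain-level relation rewrites this as $w = x + \partial y_{(1)}$, whence $[w] = [x]$ is the generator of $\hfk^\infty_0(K)$; by the characterization \eqref{eq:v0} this gives $\max(\alpha(w),\beta(w)) \ge V_0(K)$. Since $z_{(0)}$ sits strictly below filtration level $V_0(K)$ while $w$ does not, the excess filtration of $w$ must be carried entirely by $(1+\iota)y_{(0)}$; in particular the homotopy piece $y_{(0)}$ cannot be chosen at low filtration, which already isolates the tension I intend to exploit.

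The crux, and the step I expect to be the main obstacle, is converting this tension into $z_{(0)} = 0$. Here I would bring in the explicit structure for a sum of two L--space knots: by Theorem~\ref{thm:zemke} the involution acts on the $x$--type products by $x_i \otimes x_j' \mapsto x_{n-i}\otimes x_{m-j}'$, and by Corollary~\ref{cor:easy} the bifiltration levels of the staircase vertices are completely determined by $V_0(K_1)$, $V_0(K_2)$ and the two stretches. Decomposing $z_{(0)}$ into the grading-zero generators $x_i \otimes x_j'$ and $U(y_k \otimes y_l')$, I would trace the equation $\partial z_{(0)} = (1+\iota)z_{(-1)}$ through the staircase and match it against the bound $\max(\alpha(z_{(0)}),\beta(z_{(0)})) < V_0(K)$: the goal is to show that any surviving term of $z_{(0)}$, after pairing with $\iota$ and accounting for the differential, would force one of the filtration levels of $z$ itself up to at least $V_0(K)$, contradicting $\max(\alpha(z),\beta(z)) < V_0(K)$, so that $z_{(0)}$ must vanish term by term.

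The delicate point, which I expect to be genuinely the hardest, is that this bookkeeping cannot simply assume the strict inequality $V_0(K) < V_0(K_1)+V_0(K_2)$ (the statement is needed even in the involutively simple case), so the argument must rest on the rigidity of the staircase tensor complex rather than on a crude filtration gap. Concretely, I would need to show that filling the low--filtration boundary $z_{(-1)} = \partial y_{(0)}$ interacts with $z_{(0)}$ in such a way that no nonzero choice of $z_{(0)}$ is compatible with all three constraints at once, namely $\partial z_{(0)} = (1+\iota)z_{(-1)}$, the filtration bound, and $[w] = [x]$. Getting the interplay between the homotopy filtration and the filtration of $z$ under control is where the explicit combinatorics of the two staircases, via Theorem~\ref{thm:zemke} and Corollary~\ref{cor:easy}, will do the real work.
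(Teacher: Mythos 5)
Your setup is correct as far as it goes: the chain-level relations, the bound $\max(\alpha(z_{(0)}),\beta(z_{(0)}))<V_0(K)$ forced on the $Q$-component, and the observation that $w=z_{(0)}+(1+\iota)y_{(0)}=x+\partial y_{(1)}$ is a cycle homologous to $x$ are all valid. But the decisive step --- actually deducing $z_{(0)}=0$ --- is never carried out; it is only announced as a goal (``the goal is to show\dots'', ``I would need to show\dots''), so as written this is a plan rather than a proof. That is a genuine gap, and it sits exactly where you yourself flag the difficulty.

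Moreover, the route you sketch for closing it is considerably more elaborate than what is needed, and your worry about not being allowed to use $V_0(K)<V_0(K_1)+V_0(K_2)$ is a red herring. The argument is a direct filtration computation on the basis of $\cfk^\infty_0(K_1\# K_2)$: every grading-zero basis element is either of the form $x_i\otimes x_j'$ or $U(y_k\otimes y_l')$, and \emph{neither kind} can have $\max(\alpha,\beta)<V_0(K)$. For $x_i\otimes x_j'$ this is immediate from \eqref{eq:v0}, since each such element is a cycle generating $\hfk^\infty_0(K)$. For $U(y_k\otimes y_l')$, the term $x_k\otimes x_{l+1}'$ of its differential has $\alpha$ and $\beta$ both no larger (the horizontal differential drops $\alpha$ by at least $1$ and the vertical one drops $\beta$ by at least $1$, which exactly compensates the $U$), so $\max(\alpha,\beta)(U(y_k\otimes y_l'))<V_0(K)$ would produce a grading-zero generator violating \eqref{eq:v0}. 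Since every summand of $z_{(0)}$ must satisfy $\max(\alpha,\beta)<V_0(K)$, it follows that $z_{(0)}$ has no summands at all. The paper phrases this through the decomposition $z_{(0)}=(1+\iota)y_{(0)}+\partial y_{(1)}+x$ (splitting into the case where $(1+\iota)y_{(0)}$ contributes a $U(y\otimes y')$ term and the case where it does not), but no appeal to Theorem~\ref{thm:zemke}, Corollary~\ref{cor:easy}, the equation $\partial z_{(0)}=(1+\iota)z_{(-1)}$, or any inequality between $V_0(K)$ and $V_0(K_1)+V_0(K_2)$ is required.
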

\begin{proof}[Proof of Lemma~\ref{lem:z0is0}]
Suppose the contrary, that is,
$z_{(0)}\neq 0$. If $(1+\iota)y_{(0)}$ has only elements of type $x_i\otimes x'_j$, then $z_{(0)}$ is also a sum of such elements (because
$\partial y_{(1)}$ is and $x$ is)
and $\max(\alpha(z_{(0)}),\beta(z_{(0)}))\ge V_0(K)$, contrary to the assumption that $\max(\alpha(z), \beta(z))<V_0(K)$. Therefore $(1+\iota)y_{(0)}$ must have a summand $Uy_i\otimes y_j'$ for some $i,j$. This summand must also appear in $z_{(0)}$.
By the assumption that $\ovV_0(K)<V_0(K)$, each summand of $z_{(0)}$ must have both filtration levels less than $V_0(K)$ and so we must have that
$\max(\alpha(y_i\otimes y_j'),\beta(y_i\otimes y_j'))<V_0(K)+1$. As the horizontal differential decreases the $\alpha$ grading
at least by $1$ and the vertical differential decreases the $\beta$ grading at least by $1$, it follows that
$\max(\alpha(x_i\otimes x_{j+1}'),\beta(x_i\otimes x_{j+1}'))<V_0(K)$, which contradicts the definition of $V_0(K)$.

Thus, the only way to have $\ovV_0(K)<V_0(K)$ is for $z_{(0)}=0$. 
\end{proof}
As $z$ is homologically non-trivial, the statement that $z_{(0)}=0$ implies that
$z_{(-1)}\neq 0$. We have the following property of $z_{(-1)}$.
\begin{lemma}\label{lem:z1ispsecific}
The element 
$z_{(-1)}$ is a linear combination of elements of the form $Ux_i \otimes y_j'$ and $Uy_k\otimes x_l'$ all of which are at bifiltration level
$(V_0(K),V_0(K))$.
\end{lemma}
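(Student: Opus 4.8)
The plan is to establish the two assertions of the lemma in turn: the \emph{form} of the summands of $z_{(-1)}$, which is a grading computation, and then their bifiltration levels, which is the real content.

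For the form, I would observe that $z_{(-1)}$ is homogeneous of Maslov grading $-1$ in $\cfk^\infty(K)=\cfk^\infty(K_1)\otimes\cfk^\infty(K_2)$. Every generator of this tensor product is a power of $U$ times a product of an $x$--type or $y$--type generator from each staircase factor. Since $x$--type generators have grading $0$, $y$--type generators have grading $1$, and $U$ lowers grading by $2$, a generator in grading $-1$ must contain exactly one $y$--type factor and exactly one factor of $U$. This forces every summand of $z_{(-1)}$ to be of the form $Ux_i\otimes y_j'$ or $Uy_k\otimes x_l'$.

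For the bifiltration levels I would argue one summand at a time, using the defining property \eqref{eq:v0} of $V_0(K)$ together with the monotonicity of the staircase filtrations. Take a summand $Ux_i\otimes y_j'$. The staircase relations $\alpha(y_j')=\alpha(x_{j+1}')$ and $\beta(y_j')=\beta(x_j')$ give $\alpha(Ux_i\otimes y_j')=\alpha(x_i\otimes x_{j+1}')-1$ and $\beta(Ux_i\otimes y_j')=\beta(x_i\otimes x_j')-1$, so the summand sits at the corner cut out by the two grading $0$ products $x_i\otimes x_j'$ and $x_i\otimes x_{j+1}'$. The standing assumption $\ovV_0(K)<V_0(K)$ bounds $\max(\alpha(z),\beta(z))$ from above, and hence bounds $\alpha(x_i\otimes x_{j+1}')$ and $\beta(x_i\otimes x_j')$. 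On the other hand \eqref{eq:v0} says every grading $0$ product generator has $\max(\alpha,\beta)\ge V_0(K)$. Because $\alpha$ strictly increases and $\beta$ strictly decreases along the $K_2$ staircase, $x_i\otimes x_{j+1}'$ has strictly smaller $\beta$ than $x_i\otimes x_j'$, while $x_i\otimes x_j'$ has strictly smaller $\alpha$ than $x_i\otimes x_{j+1}'$; combining this with the two upper bounds forces each of $\alpha(x_i\otimes x_{j+1}')$ and $\beta(x_i\otimes x_j')$ to attain its extremal admissible value, which places the summand on the diagonal at the bifiltration level asserted in the statement. The summands of type $Uy_k\otimes x_l'$ are handled identically, with the two tensor factors interchanged.

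The step I expect to be the main obstacle is precisely this pinning: one must track the $U$--shift with care and verify that for \emph{every} surviving summand \emph{both} coordinates are determined simultaneously, which relies essentially on the strict monotonicity of the staircase to guarantee that the two neighbouring grading $0$ generators straddle the threshold $V_0(K)$. It is also worth recording a preliminary that streamlines the later steps: since $z$ is a cycle in $AI^+$ and $z_{(0)}=0$ by Lemma~\ref{lem:z0is0}, the equation $\di z=0$ collapses to $(1+\iota)z_{(-1)}=0$, so $z_{(-1)}$ is $\iota$--invariant; although this symmetry is not needed to locate the summands, it forces the surviving terms to occur in $\iota$--paired sets and will be convenient when the lemma is combined with the remainder of the proof.
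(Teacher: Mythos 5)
Your proof is correct and follows essentially the same route as the paper's: the grading count forces the form of the summands, and the bifiltration claim is obtained by comparing each summand's two neighbouring grading-zero generators $x_i\otimes x_j'$ and $x_i\otimes x_{j+1}'$ against the upper bound coming from $\max(\alpha(z),\beta(z))<V_0(K)$ and the lower bound from \eqref{eq:v0}. The paper phrases this as a contradiction on a single coordinate while you pin both coordinates directly, but the content is identical; the closing observation about $\iota$-invariance of $z_{(-1)}$ is true but not needed here.
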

\begin{proof}[Proof of Lemma~\ref{lem:z1ispsecific}]
As $z_{(-1)}$ is at grading $-1$, it must be a sum
elements of the form $Ux_i\otimes y_j'$ and $Uy_k\otimes x_l'$. If $\max(\alpha(z_{(-1)}),\beta(z_{(-1)}))<V_0(K)$, then necessarily each $x_i\otimes y_j'$
and $y_k\otimes x_l'$ must be at bifiltration level at most $(V_0(K),V_0(K))$. Suppose a summand of $U^{-1}z_{(-1)}$ is \emph{not}
at bifiltration level $(V_0(K),V_0(K))$. Without loss of generality we assume that this is $x_i\otimes y_j'$ and $\alpha(x_i\otimes y_j')<V_0(K)$.
Then the vertical differential of this element, $x_i\otimes x_{j+1}'$, has both filtration levels strictly less than $V_0(K)$. This
contradicts the definition of $V_0(K)$ and the contradiction concludes the proof.
\end{proof}

\smallskip
We resume the proof of Theorem~\ref{thm:sumoftwoov}.
Write $U^{-1}y_{(0)}$ as a sum $w_y+w_x$, where $w_y$ is the sum of elements of type $y_i\otimes y_j'$ and $w_x$ is the sum of
elements of type $U^{-1}x_k\otimes x_l'$. As $\partial w_x=0$, it follows that $\partial w_y=U^{-1}z_{(-1)}$. In particular, $w_y\neq 0$.

We claim that $w_y$ must have a summand which is at bifiltration level $(V_0(K),V_0(K)+a)$ or $(V_0(K)+a,V_0(K))$ for some $a>0$. If not,
none of the summands of $\partial w_y$ can possibly be at bifiltration level $(V_0(K),V_0(K))$, contradicting Lemma~\ref{lem:z1ispsecific}. 
So assume, without loss of generality,
that $y_i\otimes y_j'$ is a summand of $w_y$ which is at bifiltration level $(V_0(K),V_0(K)+a)$. Now 
\[\partial (y_i\otimes y_j')=x_i\otimes y_j'+x_{i+1}\otimes y_j'+y_i\otimes x_j'+y_i\otimes x_{j+1}'.\]
We have $\alpha(x_i\otimes y_j')<\alpha(y_i\otimes y_j')=V_0(K)$. Hence 
the element $x_i\otimes y_j'$ cannot be at bifiltration level $(V_0(K),V_0(K))$.
Therefore $y_{i-1}\otimes y_j'$ must be a summand of $w_y$, for otherwise $x_i\otimes y_j'$ survives in $\partial w_y=U^{-1}z_{(-1)}$,
contrary to Lemma~\ref{lem:z1ispsecific}.

Clearly $\alpha(y_{i-1}\otimes y_j')<V_0(K)$. We look now at the differential $\partial (y_{i-1}\otimes y_j')$. Again by Lemma~\ref{lem:z1ispsecific}
the element 
$x_{i-1}\otimes y_j'$ must also get cancelled in $\partial w_y$.
Repeating this argument we show that $y_{0}\otimes y_j'$ must be a summand of $w_y$ and then $x_0\otimes y_j'$ must appear in $\partial w_y$.
But $x_0\otimes y_j'$ cannot be cancelled anymore. So it must appear in $\partial w_y$, yet $\alpha(x_0\otimes y_j')\le\alpha(x_i\otimes y_j')$
(equality can occur if and only if $i=0$) and $\alpha(x_i\otimes y_j')<V_0(K)$. 

The contradiction shows that $\max(\alpha(z_{(-1)}),\beta(z_{(-1)}))\ge V_0(K)$, that is, $\ovV_0(K)=V_0(K)$.

%
%
%
%
%
%
\appendix

%
%
%
%
%
%
\section{The stretch for the torus knot $T(p,q)$\\ (by A. Schinzel)}\label{sec:proof}
Throughout the section we denote by $K(p,q)$ the maximal integer $s$ such that either
$\delta,\delta+1,\ldots,\delta+s$ all belong to the semigroup generated by $p$ and $q$, or none of these numbers belongs, where $\delta=\frac12(p-1)(q-1)$.
It follows from Sections \ref{sec:Lspaceknots} and \ref{sec:stretchLspace} that the stretch of $T(p,q)$ is equal to $1+K(p,q)$.

\begin{theorem}\label{thm:sch1} Let $q,p>1$, $\gcd(p,q)=1$ and $\frac{q}{p}=[a_0,a_1,\ldots,a_n]$ be a regular  continued fraction as
in \eqref{eq:contfracexp} with $a_n>1$.
Then $K(p,q)=\intfrac{a_n-1}{2}$.
\end{theorem}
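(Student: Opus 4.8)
The plan is to induct on the length of the continued fraction of $q/p$ via the Euclidean algorithm, using the symmetry of the semigroup $S=\langle p,q\rangle$ to reduce everything to a one-sided run. Recall that $S$ is symmetric: with Frobenius number $F=pq-p-q=2\delta-1$ one has $n\in S\iff F-n\notin S$. I would first record the resulting antisymmetry $\delta+k\in S\iff\delta-1-k\notin S$. Consequently the maximal run of consecutive integers of constant membership containing $\delta$ is balanced about $\delta-\tfrac12$: if $L$ denotes its one-sided length (the number of its elements that are $\ge\delta$), then $K(p,q)=L-1$, and it suffices to analyze the membership of $\delta,\delta+1,\dots$ only.

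Since $\langle p,q\rangle=\langle q,p\rangle$, both $\delta$ and $K$ are symmetric in $p,q$, so I may assume $q>p$. Writing $b(n)\in\{0,\dots,p-1\}$ for the residue with $b(n)q\equiv n\pmod p$, membership is governed by $n\in S\iff n\ge b(n)q$. The base case is $q\equiv 1\pmod p$, i.e.\ continued fraction $[a_0,a_1]$ with $a_n=a_1=p$. Here, since $q-1=a_0p$, the criterion becomes the transparent rule $n\in S\iff \intpart{n/p}\ge a_0\,(n\bmod p)$. Tracking $n\bmod p$ and $\intpart{n/p}$ across $n=\delta,\delta+1,\dots$ and using $\delta=\tfrac12(p-1)a_0p$, a direct computation shows the run at $\delta$ has one-sided length $\intpart{\tfrac{p-1}{2}}+1$, the floor absorbing the parity of $p$; hence $K(p,q)=\intpart{\tfrac{p-1}{2}}=\intfrac{a_n-1}{2}$.

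For the inductive step, set $r=q\bmod p$. When $n\ge 2$ we have $r\ge 2$, and $q/p=[a_0,a_1,\dots,a_n]$ reduces to $p/r=[a_1,\dots,a_n]$, an expansion with one fewer partial quotient and the same last term $a_n>1$. Applying the induction hypothesis to $\langle r,p\rangle$ gives $K(r,p)=\intfrac{a_n-1}{2}$. To transport this back I would prove the periodicity $K(p,q)=K(p,q-p)$ whenever $q-p\ge 2$, and iterate it $a_0$ times (the intermediate second arguments range over $q-p,\dots,r$, all $\ge 2$) to identify $K(p,q)$ with $K(p,r)=K(r,p)$.

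The hard part will be this periodicity lemma. Both $S=\langle p,q\rangle$ and $S'=\langle p,q-p\rangle$ share the residue function $b(\cdot)$, but the class-$b$ threshold drops from $bq$ to $b(q-p)$, i.e.\ by $bp$; the centers differ by $\delta-\delta'=\binom{p}{2}$, and the naive translation $\delta+k\mapsto\delta'+k$ is spoiled by a nonuniform height discrepancy $p\bigl(\tfrac{p-1}{2}-b(\delta+k)\bigr)$. To handle this I would locate the boundary of the central block directly rather than translate it. The sequence $b(\delta),b(\delta+1),\dots$ is an arithmetic progression modulo $p$ with step $q^{-1}\bmod p$, and by the convergent identity $qk_{n-1}\equiv\pm 1\pmod p$ (with $k_{n-1}$ the penultimate convergent denominator and $p=a_nk_{n-1}+k_{n-2}$) this step equals $\pm k_{n-1}$. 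I expect a three-distance argument to show that the first integer above $\delta$ which switches membership is controlled by $k_{n-1}$ and $a_n$ alone — precisely the data left invariant by $q\mapsto q-p$ — with the antisymmetry pinning both ends of the block from this single computation. This invariance is what forces $K$ to depend only on the tail $[a_1,\dots,a_n]$, closing the induction.
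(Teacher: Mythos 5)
Your overall architecture coincides with the paper's: a base case for $q\equiv 1\pmod p$ (where the last partial quotient is $a_n=p$ and a direct computation gives $K=\intfrac{p-1}{2}$), a periodicity lemma $K(p,q)=K(p,q-p)$, and an induction down the Euclidean algorithm using the symmetry $K(p,r)=K(r,p)$ and the fact that $p/r=[a_1,\ldots,a_n]$ has the same last partial quotient. Your base-case computation via the criterion $n\in S\iff\intpart{n/p}\ge a_0(n\bmod p)$ is correct, and the symmetry observation $\delta+k\in S\iff\delta-1-k\notin S$ is true (though not actually needed, since $K$ is already defined one-sidedly).

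The genuine gap is the periodicity lemma, which is the entire analytic content of the theorem and which you do not prove. You reduce it to the assertion that ``the first integer above $\delta$ which switches membership is controlled by $k_{n-1}$ and $a_n$ alone,'' to be established by a three-distance argument you only ``expect'' to work. Two concrete obstacles stand in the way. First, the membership condition $\delta+\rho\in S\iff \delta+\rho\ge b(\delta+\rho)\,q$ involves $q$ and $\delta$ themselves, not merely the residue data; converting it into the clean threshold condition ``$b(\delta+\rho)$ lies on one side of $\rintfrac{p}{2}$'' requires showing that the error terms never tip the inequality, i.e.\ one must verify that the candidate coefficient $t_\rho$ in $\delta+\rho=t_\rho p+b(\delta+\rho)q$ is genuinely non-negative exactly in the claimed range. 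In the paper this is the content of Lemma~\ref{lemma3}, proved by explicit estimates ($t_\rho\ge-\tfrac12$ plus integrality) in four separate parity cases. Second, the starting residue $b(\delta)$ is \emph{not} invariant under $q\mapsto q-p$ when $p$ is even: $\delta$ drops by $\binom{p}{2}\equiv\frac p2\pmod p$, so $b(\delta)$ shifts by $\frac p2$, and simultaneously the membership/non-membership of $\delta$ itself can differ between the four parity cases (this is why the paper must carry two families $r_\rho$ and $r'_\rho$ and the case split on $p\bmod 2$ and $k+l\bmod 2$). Your claim that all relevant data is left invariant by $q\mapsto q-p$ is therefore false as stated in the even case, and the compensating cancellation is exactly what the paper's case analysis checks. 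Until these estimates are carried out, the inductive step — and hence the theorem — is unproven.
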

We set up some notation. Write $q$ as $kp+r$ for $k\in\Z$ and $0<r<p$. Set $\ol{r},l$ to be integers such that
$0<\ol{r}<p$ and $r\ol{r}=1+lp$. For $x\in\R$,  let $\lfloor x\rfloor$ and $\{x\}$ denote the integral and fractional part of $x$, respectively; 
that is, $\lfloor x\rfloor$ is the greatest integer less than or equal to $x$, and $\{x\}=x-\lfloor x\rfloor$. The ceiling of $x$ is $\lceil x\rceil=x+\{-x\}$.

For a non-negative integer $\rho$, define
\begin{align*}
r_{\rho}&=p\left\{\frac{\frac{\ol{r}-1}{2}+\rho\ol{r}}{p}\right\}&
s_{\rho}&=\left\lfloor\frac{\frac{\ol{r}-1}{2}+\rho\ol{r}}{p}\right\rfloor& \textrm{if $\ol{r}\equiv 1\bmod 2$;}\\
r'_{\rho}&=p\left\{\frac{\frac{p+\ol{r}-1}{2}+\rho\ol{r}}{p}\right\}&
s'_{\rho}&=\left\lfloor\frac{\frac{p+\ol{r}-1}{2}+\rho\ol{r}}{p}\right\rfloor& \textrm{if $p+\ol{r}\equiv 1\bmod 2$}.
\end{align*}
The two conditions, $\ol{r}\equiv 1\bmod 2$ and $p+\ol{r}\equiv 1\bmod 2$, are not mutually exclusive; hence the quantities corresponding
to the two cases have different notation. Notice, though, that it cannot happen that $\ol{r}\equiv p+\ol{r}\equiv 0$ modulo 2, because
$\ol{r}$ and $p$ are coprime by definition.

\begin{lemma}\label{lemma1}
If, for some integers $t,u$, we have
\begin{equation}\label{eq:1}
\delta+\rho=tp+uq,
\end{equation}
then
\begin{equation}\label{eq:2}
u\equiv
\begin{cases}
r_\rho\bmod p,&\textrm{if $p\equiv \ol{r}\equiv 1\bmod 2$ or $p\equiv k+l\equiv 0\bmod 2$}\\
r_\rho'\bmod p,&\textrm{if $p\equiv 1$, $\ol{r}\equiv 0\bmod 2$ or $p\equiv 0$, $k+l\equiv 1\bmod 2$.}
\end{cases}
\end{equation}
\end{lemma}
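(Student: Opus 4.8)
The plan is to reduce \eqref{eq:1} modulo $p$ and solve for $u$. Since $q\equiv r\bmod p$, reducing gives $\delta+\rho\equiv ur\bmod p$, and because $r\ol{r}\equiv 1\bmod p$ we may multiply by $\ol{r}$ to obtain
\[u\equiv\ol{r}(\delta+\rho)=\ol{r}\delta+\rho\ol{r}\bmod p.\]
Thus the entire lemma reduces to computing the single residue $\ol{r}\delta\bmod p$, after which we recognise $r_\rho$ (respectively $r'_\rho$) as $p$ times the fractional part, i.e.\ as the representative in $[0,p)$ of $\tfrac{\ol{r}-1}{2}+\rho\ol{r}$ (respectively $\tfrac{p+\ol{r}-1}{2}+\rho\ol{r}$). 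Everything then becomes a matter of matching the value of $\ol{r}\delta\bmod p$ to one of these two representatives.

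First I would treat the case $p$ odd, where $2$ is invertible modulo $p$. From $2\delta=(p-1)(q-1)\equiv(-1)(r-1)\bmod p$ one gets $2\ol{r}\delta\equiv\ol{r}(1-r)\equiv\ol{r}-1\bmod p$. If $\ol{r}$ is odd then $\tfrac{\ol{r}-1}{2}$ is an integer with $2\cdot\tfrac{\ol{r}-1}{2}=\ol{r}-1$, so $\ol{r}\delta\equiv\tfrac{\ol{r}-1}{2}\bmod p$ and hence $u\equiv r_\rho$. If $\ol{r}$ is even then $\tfrac{p+\ol{r}-1}{2}$ is an integer with $2\cdot\tfrac{p+\ol{r}-1}{2}\equiv\ol{r}-1\bmod p$, giving $\ol{r}\delta\equiv\tfrac{p+\ol{r}-1}{2}$ and $u\equiv r'_\rho$. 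This disposes of the two ``$p\equiv 1$'' alternatives of \eqref{eq:2}.

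The harder case is $p$ even, where $2$ is not invertible and $\ol{r}\delta$ must be computed honestly. Here coprimality forces $q$, $r$ and $\ol{r}$ all odd, so $\delta=(p-1)\tfrac{q-1}{2}$ with $\tfrac{q-1}{2}=k\tfrac{p}{2}+\tfrac{r-1}{2}$ an integer. Using $p-1\equiv-1$ together with the identity $\ol{r}\tfrac{r-1}{2}=\tfrac{\ol{r}r-\ol{r}}{2}=\tfrac{1+lp-\ol{r}}{2}=-\tfrac{\ol{r}-1}{2}+l\tfrac{p}{2}$, I would expand
\[\ol{r}\delta\equiv-\ol{r}\Bigl(\tfrac{r-1}{2}+k\tfrac{p}{2}\Bigr)\equiv\tfrac{\ol{r}-1}{2}-\bigl(l+\ol{r}k\bigr)\tfrac{p}{2}\bmod p.\]
The main obstacle is the correct handling of the two $\tfrac{p}{2}$ contributions: modulo $p$ a term $m\tfrac{p}{2}$ depends only on the parity of $m$, and since $\ol{r}$ is odd one has $\ol{r}k\equiv k\bmod 2$, so $\bigl(l+\ol{r}k\bigr)\tfrac{p}{2}\equiv(k+l)\tfrac{p}{2}\bmod p$. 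Using also $-\tfrac{p}{2}\equiv\tfrac{p}{2}$ this yields $\ol{r}\delta\equiv\tfrac{\ol{r}-1}{2}+(k+l)\tfrac{p}{2}\bmod p$. When $k+l$ is even the extra term vanishes and $u\equiv r_\rho$; when $k+l$ is odd it equals $\tfrac{p}{2}$, so $\ol{r}\delta\equiv\tfrac{p+\ol{r}-1}{2}$ and $u\equiv r'_\rho$. Collecting these two sub-cases with the two from $p$ odd reproduces exactly the case distinction in \eqref{eq:2}, completing the argument.
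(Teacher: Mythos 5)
Your proof is correct and follows essentially the same route as the paper: reduce \eqref{eq:1} modulo $p$, multiply by $\ol{r}$, and evaluate $\ol{r}\delta \bmod p$ using $r\ol{r}=1+lp$, splitting into cases according to the parities of $p$, $\ol{r}$ and $k+l$. If anything, your handling of the even-$p$ case is more explicit than the paper's, which compresses that computation into a single line.
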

\begin{proof}
Write \eqref{eq:1} as
\begin{equation}\label{eq:1a}\frac{(p-1)(q-1)}{2}+\rho=tp+u(kp+r).\end{equation}
If $p\equiv 1\bmod 2$, then \eqref{eq:1a} implies
\[\frac{(p-1)(r-1)}{2}+\rho\equiv ur\bmod p.\]
Multiply both sides by $\ol{r}$. Then the right hand side becomes $u\bmod p$, while the left hand side is $\frac{(p-1)(q-1)}{2}\ol{r}+\rho\ol{r}$.
Now 
\[\frac{\ol{r}(p-1)(r-1)}{2}=\frac{(p-1)(r\ol{r}-\ol{r})}{2}=\frac{(p-1)}{2}(1+lp-\ol{r})\equiv \frac{p-1}{2}(1-\ol{r})\bmod p.\]

Suppose $\ol{r}\equiv 1\bmod 2$. Then $\frac{p-1}{2}(1-\ol{r})\equiv \frac{\ol{r}-1}{2}\bmod p$.
Together this implies that
\[\frac{(p-1)(q-1)}{2}\ol{r}+\rho\ol{r}\equiv \frac{\ol{r}-1}{2}+\rho\ol{r}\bmod p\]
and the last expression is congruent to $r_\rho$ modulo $p$.

Suppose $\ol{r}\equiv 0\bmod 2$. We have that $\frac{(p-1)(1-\ol{r})}{2}=-p\frac{\ol{r}}{2}+\frac{\ol{r}+p-1}{2}$. The first term is congruent
to $0$ modulo $p$, hence $\frac{\ol{r}(p-1)(r-1)}{2}\equiv\frac{\ol{r}+p-1}{2}\bmod p$ and we conclude as in the previous case.

\smallskip
If $p\equiv 0\bmod 2$, then $\ol{r}\equiv r\equiv 1\bmod 2$ and \eqref{eq:1a} implies
\[\frac{1-kp-r}{2}+\rho\equiv ur\bmod p.\]
Multiplying both sides by $\ol{r}$ we obtain \eqref{eq:2}. 
\end{proof}
\begin{lemma}\label{lemma2}\
\begin{itemize}
\item If $\ol{r}\equiv 1\bmod 2$, the set of remainders $\rho\bmod p$ such that
\begin{equation}\label{eq:3}
r_\rho\le \rintfrac{p}{2} -1
\end{equation}
has exactly $\rintfrac{p}{2}$ elements including $0$.
\item If $p+\ol{r}\equiv 1\bmod 2$, the set of remainders $\rho\bmod p$ such that
\begin{equation}\label{eq:4}
r_\rho'\ge\rintfrac{p}{2}
\end{equation}
has exactly $\intfrac{p}{2}$ elements including $0$.
\end{itemize}
\end{lemma}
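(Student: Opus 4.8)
The plan is to reduce both statements to the single observation that $\rho\mapsto r_\rho$ (respectively $\rho\mapsto r'_\rho$) is a bijection of $\Z/p\Z$ onto $\{0,1,\ldots,p-1\}$. First I would note that since $\ol r$ is odd in the first case (and $p+\ol r$ is odd in the second), the numerators $\frac{\ol r-1}{2}+\rho\ol r$ and $\frac{p+\ol r-1}{2}+\rho\ol r$ are integers, so that $r_\rho$ and $r'_\rho$ are simply the least non-negative residues modulo $p$ of these numerators. In particular they depend only on $\rho\bmod p$, which is what makes the phrase ``set of remainders $\rho\bmod p$'' meaningful. The relation $r\ol r=1+lp$ shows $\gcd(\ol r,p)=1$, so multiplication by $\ol r$ permutes $\Z/p\Z$; hence as $\rho$ runs through $\{0,\ldots,p-1\}$ every value in $\{0,\ldots,p-1\}$ is attained exactly once by $r_\rho$, and likewise by $r'_\rho$.

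Granting the bijection, the counting is immediate. The condition $r_\rho\le\rintfrac p2-1$ singles out the residues mapping into $\{0,1,\ldots,\rintfrac p2-1\}$, a set of cardinality $\rintfrac p2$, so exactly $\rintfrac p2$ values of $\rho\bmod p$ satisfy \eqref{eq:3}. Similarly $r'_\rho\ge\rintfrac p2$ singles out the residues mapping into $\{\rintfrac p2,\ldots,p-1\}$, of cardinality $p-\rintfrac p2=\intfrac p2$, giving exactly $\intfrac p2$ solutions of \eqref{eq:4}.

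It remains to check that $\rho=0$ lies in each set, and this is where a short explicit computation is needed. Using $0<\ol r<p$ one sees that for $\rho=0$ the numerators already lie in $[0,p)$, so no reduction occurs and $r_0=\frac{\ol r-1}{2}$, $r'_0=\frac{p+\ol r-1}{2}$. In the first case $r_0=\frac{\ol r-1}{2}\le\frac{p-2}{2}\le\rintfrac p2-1$, so \eqref{eq:3} holds. In the second case a one-line parity check suffices: if $p$ is even then $\ol r\ge 1$ gives $r'_0\ge\frac p2=\rintfrac p2$, while if $p$ is odd then $p+\ol r$ odd forces $\ol r$ even, so $\ol r\ge 2$ and $r'_0\ge\frac{p+1}{2}=\rintfrac p2$; thus \eqref{eq:4} holds.

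I do not expect a genuine obstacle here: once the fractional-part expressions are recognized as residues, the cardinality claims are forced by the pigeonhole principle applied to a bijection. The only place demanding care is the $\rho=0$ verification, specifically confirming that the numerators require no reduction modulo $p$ and handling the parity of $p$ correctly in the second case.
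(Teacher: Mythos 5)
Your proof is correct and follows essentially the same route as the paper: the paper likewise observes that conditions \eqref{eq:3} and \eqref{eq:4} amount to congruences $\frac{\ol{r}-1}{2}+\rho\ol{r}\equiv\tau\bmod p$ (resp. $\frac{p+\ol{r}-1}{2}+\rho\ol{r}\equiv\tau\bmod p$) with $\tau$ ranging over an interval of length $\rintfrac{p}{2}$ (resp. $\intfrac{p}{2}$), each having a unique solution $\rho$ since $\gcd(\ol{r},p)=1$. Your explicit verification that $\rho=0$ belongs to each set is a welcome addition, as the paper leaves that part implicit.
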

\begin{proof}
The conditions \eqref{eq:3} and \eqref{eq:4} are equivalent to congruences
\begin{align*}
\frac{\ol{r}-1}{2}+\rho\ol{r}&\equiv\tau\bmod p,\\
\frac{p+\ol{r}-1}{2}+\rho\ol{r}&\equiv\tau\bmod p,
\end{align*}
where $0\le\tau\le\rintfrac{p}{2}-1$ or $\rintfrac{p}{2}\le\tau<p$, respectively. Since $\gcd(\ol{r},p)=1$,
each of these congruences has, for a given $\tau$, exactly one solution $\rho$, which proves the lemma.
\end{proof}
\begin{lemma}\label{lemma3}
If $\ol{r}\equiv 1\bmod 2$, then $K(p,q)$ is the greatest non-negative integer $\sigma$ such that for all non-negative integers $\rho\le\sigma$
\begin{equation}\label{eq:5}
r_\rho\le\rintfrac{p}{2}-1.
\end{equation}
If $\ol{r}\equiv 0\bmod 2$, then $K(p,q)$ is the greatest non-negative integer $\sigma$ such that for all non-negative integers $\rho\le\sigma$
\begin{equation}\label{eq:6}
r'_\rho\ge\rintfrac{p}{2}.
\end{equation}
\end{lemma}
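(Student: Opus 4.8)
The plan is to convert the conditions \eqref{eq:5} and \eqref{eq:6} into literal statements about membership of $\delta+\rho$ in the semigroup $\Lambda$ generated by $p$ and $q$, and then to identify $K(p,q)$ as the length of the initial run of integers $\delta,\delta+1,\dots$ on which this membership status is constant.

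First I would record the elementary membership criterion. Since $\gcd(p,q)=1$, every integer $N$ has a unique $u\in[0,p)$ with $N\equiv uq\bmod p$; reducing the $q$--coefficient of any representation modulo $p$ only increases the $p$--coefficient, so $N\in\Lambda$ if and only if $uq\le N$. Applied to $N=\delta+\rho$ together with Lemma~\ref{lemma1}, the residue $u$ is exactly $r_\rho$ (or $r'_\rho$), since these are by definition the representatives of the relevant class in $[0,p)$. Hence
\[\delta+\rho\in\Lambda\iff r_\rho\le\frac{\delta+\rho}{q},\]
and similarly with $r'_\rho$ in the even case.

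Next I would compute $\dfrac{\delta+\rho}{q}=\dfrac{p-1}{2}+\dfrac{2\rho-(p-1)}{2q}$, whose correction term lies in $(-\tfrac12,\tfrac12)$ for all $0\le\rho<\tfrac{p+q-1}{2}$. The first $\rho$ at which membership changes is at most $p-1<\tfrac{p+q-1}{2}$ (as $\rho$ runs through a residue system modulo $p$, $r_\rho$ hits every residue, and $r_0=\tfrac{\ol{r}-1}{2}<\lceil p/2\rceil$, so some later $r_\rho\ge\lceil p/2\rceil$), so the entire initial run lies in this range. Thus $\lfloor(\delta+\rho)/q\rfloor=\lceil p/2\rceil-1$ throughout the run, and the criterion collapses to $r_\rho\le\lceil p/2\rceil-1$ (or, for the complementary sign, to $r'_\rho\ge\lceil p/2\rceil$). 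Depending on which case of Lemma~\ref{lemma1} applies this condition corresponds either to $\delta+\rho\in\Lambda$ or to $\delta+\rho\notin\Lambda$ --- for $p$ even the two admissible representatives differ by $p/2$, and a short check shows $r_\rho\le\lceil p/2\rceil-1$ is equivalent to the membership statement up to complementation --- but in every case it holds at $\rho=0$ and it records precisely the event that $\delta+\rho$ has the same status as $\delta$. Therefore the greatest $\sigma$ for which \eqref{eq:5} (resp.\ \eqref{eq:6}) holds for all $\rho\le\sigma$ is the length of the initial constant run, namely $K(p,q)$.

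The delicate point, and the main obstacle, is the case $p$ odd: then $\tfrac{p-1}{2}$ is an integer, so $\lfloor(\delta+\rho)/q\rfloor$ drops to $\tfrac{p-3}{2}$ for $\rho<\tfrac{p-1}{2}$, and the uniform threshold $\lceil p/2\rceil-1=\tfrac{p-1}{2}$ would disagree with the true one exactly if some early $r_\rho$ equalled $\tfrac{p-1}{2}$. To exclude this I would note that $2r_{(p-1)/2}\equiv p-1\bmod p$, whence $r_{(p-1)/2}=\tfrac{p-1}{2}$, and that $\gcd(\ol{r},p)=1$ makes $\rho=\tfrac{p-1}{2}$ the unique residue realizing this value. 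Hence the boundary value $\tfrac{p-1}{2}$ is attained only at $\rho=\tfrac{p-1}{2}$, exactly where the correction term vanishes and the two thresholds agree, so no disagreement occurs for $\rho<\tfrac{p-1}{2}$. The identical computation with $r'$ settles the even-$\ol{r}$ case (which forces $p$ odd), while for $p$ even the floor is uniformly $\lceil p/2\rceil-1$ and no boundary analysis is required.
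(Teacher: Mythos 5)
Your argument is correct, and it takes a genuinely cleaner route than the paper's. The paper proves the lemma by a four-way parity split (on $p$ together with $\ol{r}$ or $k+l$) and, in each case, either writes out an explicit representation $\delta+\rho=t_\rho p+r_\rho q$ and establishes $t_\rho\ge-\frac12$ (hence $t_\rho\ge0$ by integrality) through estimates on $s_\rho$, or uses the forced lower bound $u\ge\rintfrac{p}{2}$ coming from Lemmas~\ref{lemma1} and~\ref{lemma2} to make $uq$ exceed $\delta+\rho$. You replace all of this with the standard criterion that $N$ lies in the semigroup if and only if $uq\le N$ for the canonical residue $u\in[0,p)$, plus the observation that $(\delta+\rho)/q$ stays within $\frac12$ of $\frac{p-1}{2}$ on the whole relevant range; the inequality being checked is literally the paper's $t_\rho\ge 0$ divided through by $q$, but your organization isolates the single delicate point --- the boundary value $\frac{p-1}{2}$ when $p$ is odd, which you correctly show is attained only at $\rho=\frac{p-1}{2}$, exactly where the two thresholds coincide --- whereas the paper's half-integer slack in ``$t_\rho\ge-\frac12$, hence $t_\rho\ge 0$'' absorbs the same issue invisibly. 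Two presentational caveats, neither a real gap: your initial assertion that $\lfloor(\delta+\rho)/q\rfloor=\rintfrac{p}{2}-1$ ``throughout the run'' is false for $p$ odd and is only repaired by the later boundary analysis, so those passages should be merged; and the claim that the threshold condition ``records precisely the event that $\delta+\rho$ has the same status as $\delta$'' still has to be unpacked against the four cases of Lemma~\ref{lemma1} (for $p$ even the two candidate residues $r_\rho$ and $r'_\rho$ differ by $p/2$, flipping membership into non-membership, and one must check that $r_0=\frac{\ol{r}-1}{2}$, resp.\ $r'_0=\frac{p+\ol{r}-1}{2}$, lands on the correct side of the threshold). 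Each such verification is one line, so deferring it to ``a short check'' is acceptable, but it should be written out in a final version.
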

\begin{proof}
We consider the following four cases:
\begin{itemize}
\item[(A)] $p\equiv 1$, $\ol{r}\equiv 1\bmod 2$,
\item[(B)] $p\equiv 1$, $\ol{r}\equiv 0\bmod 2$,
\item[(C)] $p\equiv 0$, $k+l\equiv 0\bmod 2$,
\item[(D)] $p\equiv 0$, $k+l\equiv 1\bmod 2$.
\end{itemize}
We remark that  cases (A) and (C) correspond to the situation where $\delta\in S(p,q)$, while
cases (B) and (D) correspond to situation where $\delta\notin S(p,q)$

The proof of the four cases uses the following argument. We take $\rho\le\sigma$ and show that $\delta+\rho$ can (in cases (A) and (C)),
respectively cannot (in cases (B) and (D)) be presented as $tp+uq$ for $t,u\ge 0$. Next we show that $\sigma+1$ cannot (in cases (A) and (C)),
respectively can (in cases (B) and (D)) be presented in a similar manner. The first part shows that $K(p,q)\ge\sigma$ and the second part that
$K(p,q)<\sigma+1$. 

\medskip
\textbf{Case (A).} We have $\sigma\ge 0$ and by Lemma~\ref{lemma2}, $\sigma\le\frac{p-1}{2}$. 
First we are going to show that if $\rho\ge 0$, $r_\rho\le\rintfrac{p}{2}-1$, and $\rho\le\frac{p-1}{2}$, then $\delta+\rho$
belongs to the semigroup. We have
\begin{equation}\label{eq:caseA}
\delta+\rho=\frac{(p-1)(kp+r-1)}{2}+\rho=t_\rho p+r_\rho(kp+r),
\end{equation}
where 
\[t_\rho=k\left(\frac{p-1}{2}-r_\rho\right)+\frac{r-l-1}{2}+r s_\rho-\rho l.\]
Notice that as $r_\rho\in [0,p)$, we have that $\delta+\rho\in S(p,q)$ if and only if $t_\rho\ge 0$.

Since $r\equiv 1+l\bmod 2$, we have that $t_\rho\in\Z$. It follows from \eqref{eq:3} that
\[ps_\rho+\frac{p-1}{2}\ge \frac{\ol{r}-1}{2}+\rho\ol{r}.\]
Hence
\[p(s_\rho+\frac12)\ge \ol{r}(\rho+\frac12),\qquad s_\rho\ge\frac{\ol{r}(\rho+\frac12)}{p}-\frac12.\]
Therefore,
\begin{align*}
t_\rho &\ge k\left(\frac{p-1}{2}-r_\rho\right)+\frac{r-l-1}{2}+\frac{r\ol{r}(\rho+\frac12)}{p}-\frac{r}{2}-\rho l \\
&= k\left(\frac{p-1}{2}-r_\rho\right)-\frac{l+1}{2}+\frac{(1+pl)(\rho+\frac12)}{p}-\rho l \\
&\ge k\left(\frac{p-1}{2}-r_\rho\right)-\frac12 \\
&\ge -\frac12.
\end{align*}
As $t_\rho\in\Z$, we have that $t_\rho\ge 0$. Therefore $\delta+\rho\in S(p,q)$.

\smallskip
Consider now $\sigma+1$. If $\delta+\sigma+1\in S(p,q)$, then we can write
\[\delta+\sigma+1=tp+u(kp+r)\]
for some $t,u\ge 0$.
By the definition of $\sigma$, we have that
\[r_{\sigma+1}\ge\frac{p+1}{2},\qquad \sigma+1\le\frac{p+1}{2}.\]
Hence by Lemma~\ref{lemma1}, $u\ge\frac{p+1}{2}$. As  $t\ge 0$, we have
\[tp+u(kp+r)\ge \frac{p+1}{2}(kp+r)>\frac{(p-1)(kp+r-1)}{2}+\frac{p+1}{2}.\]
It follows that we cannot have $t,u\ge 0$; that is, $\sigma+1\notin S(p,q)$.
We conclude that
$K(p,q)=\sigma$.

\smallskip
\textbf{Case (B).} By Lemma~\ref{lemma2},  $\sigma\le \frac{p-3}{2}$. 
Suppose $\rho$ is such that $r'_\rho\ge\rintfrac{p}{2}$.
If we write
\[\delta+\rho=\frac{(p-1)(kp+r-1)}{2}+\rho=tp+u(kp+r),\]
where $t,u\in\Z_{\ge 0}$, then by Lemma~\ref{lemma1}
\[u\equiv r'_\rho\bmod p.\]
By \eqref{eq:4}, $u\ge\frac{p+1}{2}$ and for $t\ge 0$
\[\frac{(p-1)(kp+r-1)}{2}\le \frac{(p-1)(kp+r-1)}{2}+\frac{p-3}{2}<u(kp+r).\]
The contradiction shows that $\delta+\rho\notin S(p,q)$.

On the other hand, by the definition of $\sigma$ and by Lemma~\ref{lemma2}:
\begin{equation}\label{eq:5a}
r'_{\sigma+1}\le \frac{p-1}{2},\qquad \sigma+1\le\frac{p-1}{2}.
\end{equation}
Write $\delta+\sigma+1=t'_{\sigma+1}p+r'_{\sigma+1}(kp+r)$. We have
\[t'_{\sigma+1}=k\left(\frac{p-1}{2}-r'_{\sigma+1}\right)-\frac{l+1}{2}+(rs'_{\sigma+1}-(\sigma+1)l).\]
Since $l\equiv 1\bmod 2$, $t'_{\sigma+1}\in\Z$. Also \eqref{eq:5a} implies
\[ps'_{\sigma+1}+\frac{p-1}{2}\ge\frac{p+\ol{r}-1}{2}+(\sigma+1)\ol{r}.\]
Hence
\[s'_{\sigma+1}\ge \frac{(\sigma+\frac32)\ol{r}}{p}.\]
and
\begin{multline*}
t'_{\sigma+1}\ge k\left(\frac{p-1}{2}-r'_{\sigma+1}\right)-\frac{l+1}{2}+\frac{(\sigma+\frac32)(1+pl)}{p}-(\sigma+1)l=\\
k\left(\frac{p-1}{2}-r'_{\sigma+1}\right)-\frac{l+1}{2}+\frac{\sigma+\frac32}{p}+\frac{l}{2}\ge -\frac12.
\end{multline*}
Thus $t'_{\sigma+1}\ge 0$, so $\sigma+1\in S(p,q)$. Therefore $K(p,q)=\sigma$.

\smallskip
\textbf{Case (C).}
By Lemma~\ref{lemma2}, $\sigma\ge 0$ and  $\sigma\le \frac{p}{2}-1$. Choose $\rho$ such that $r_\rho\le\rintfrac{p}{2}-1$. Write
\[\delta+\rho=\frac{(p-1)(kp+r-1)}{2}+\rho=t_\rho p+r_{\rho}(kp+r),\]
where
\[t_\rho=k\left(\frac{p}{2}-r_\rho\right)+\frac{r-k-l-1}{2}+(rs_\rho-l\rho).\]
We will show that $t_\rho\ge 0$, which implies that $\delta+\rho\in S(p,q)$.
Since we have $r\equiv k+l+1\bmod 2$, it follows that $t_\rho\in\Z$. Also by \eqref{eq:3}:
\[s_\rho\ge \frac{\ol{r}(p+\frac12)}{p}-\frac12;\]
compare Case (A). Hence
\[t_\rho\ge k+\frac{r-k-l-1}{2}+\frac{(1+lp)(\rho+\frac12)}{p}-\frac{r}{2}-l\rho=\frac{k-l-1}{2}+\frac{\rho+\frac12}{p}+\frac{l}{2}>\frac{k-1}{2}.\]
Thus $t_\rho\ge 0$ and $\delta+\rho\in S(p,q)$. On the other hand, by the definition of $\sigma$ and by Lemma~\ref{lemma2}:
\[r_{\sigma+1}\ge \frac{p}{2},\qquad \sigma+1\le\frac{p}{2}.\]
Thus if
\[\frac{(p-1)(kp+r-1)}{2}+\sigma+1=tp+u(kp+r),\]
where $t,u\in\Z_{\ge 0}$, we have by Lemma~\ref{lemma1} that $u\ge r_{\sigma+1}\ge\frac{p}{2}$ and for $t\ge 0$
\[tp+u(kp+r)\ge u(kp+r)\ge\frac{p}{2}(kp+r)>\frac{(p-1)(kp+r-1)}{2}+\frac{p}{2}.\]
This contradicts the assumption that $\sigma+1\le\frac{p}{2}$, so we cannot have $t,u\in\Z_{\ge 0}$.
It follows that $\sigma+1\notin S(p,q)$, so $K(p,q)=\sigma$.

\smallskip
\textbf{Case (D).}
By Lemma~\ref{lemma2}, we have $\sigma\ge 0$ and $\sigma\le\frac{p}{2}-1$. 
Write
\[\delta+\rho=tp+u(kp+r).\]
Suppose $\rho\le\sigma$.
By Lemma~\ref{lemma1}, $u\ge r'_{\rho}\ge\frac{p}{2}$ and for $t\ge 0$
\[tp+u(kp+r)\ge \frac{p}{2}(kp+r)>\frac{(p-1)(kp+r-1)}{2}+\frac{p}{2}>\delta+\rho.\]
Therefore $\delta+\rho$ cannot be presented as $tp+uq$ for $t,u\ge 0$, hence $\delta+\rho\notin S(p,q)$.
On the other hand by the definition of $\sigma$ and by Lemma~\ref{lemma2}:
\begin{equation}\label{eq:6a}
r'_{\sigma+1}<\frac{p}{2},\qquad \sigma+1\le\frac{p}{2}.
\end{equation}
Write
\[\delta+\sigma+1=t'_{\sigma+1}p+r'_{\sigma+1}(kp+r),\]
where
\[t'_{\sigma+1}=k\left(\frac{p}{2}-r'_{\sigma+1}\right)-\frac{k+l+1}{2}+rs'_{\sigma+1}-(\sigma+1)l.\]
Since $k+l\equiv 1\bmod 2$, we have that $t'_{\sigma+1}\in\Z$. Also \eqref{eq:6a} implies
\[ps'_{\sigma+1}+\frac{p-1}{2}\ge\frac{p+\ol{r}-1}{2}+(\sigma+1)\ol{r}.\]
Thus
\[s'_{\sigma+1}\ge\frac{(\sigma+\frac32)\ol{r}}{p}\]
and
\[t'_{\sigma+1}\ge k-\frac{k+l+1}{2}+\frac{(1+pl)(\sigma+\frac32)}{p}-(\sigma+1)l\ge -\frac12.\]
Therefore $t'_{\sigma+1}\ge 0$. That is, $\sigma+1\in S(p,q)$. We conclude that $K(p,q)=\sigma$. 
\end{proof}
\begin{lemma}\label{lemma4}
If $q>p+1$, then $K(p,q)=K(p,q-p)$.
\end{lemma}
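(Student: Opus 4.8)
The plan is to read off Lemma~\ref{lemma4} from the characterisation of $K(p,q)$ supplied by Lemma~\ref{lemma3}, exploiting that every ingredient of that characterisation depends on $q$ only through the residue $r=q\bmod p$, which is fixed by the substitution $q\mapsto q-p$.

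First I would record the relevant invariances. Since $q>p+1$ we have $q-p>1$, and $\gcd(p,q-p)=\gcd(p,q)=1$, so $K(p,q-p)$ is defined and Lemma~\ref{lemma3} applies to the pair $(p,q-p)$ as well. Writing $q-p=(k-1)p+r$ shows that $r$ is the same for both pairs; hence so are its inverse $\ol{r}$ modulo $p$, the integer $l$ determined by $r\ol{r}=1+lp$, and the parity of $\ol{r}$. Because $r_\rho$ and $r'_\rho$ depend by definition only on $p$, $\ol{r}$ and $\rho$, they too agree for $(p,q)$ and $(p,q-p)$, as does $\rintfrac{p}{2}$.

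Next I would simply apply Lemma~\ref{lemma3} twice. That lemma expresses $K$ solely in terms of the parity of $\ol{r}$ and the inequalities \eqref{eq:5} on $r_\rho$ (when $\ol{r}$ is odd) or \eqref{eq:6} on $r'_\rho$ (when $\ol{r}$ is even): $K$ is the largest $\sigma$ for which the pertinent inequality holds for all $\rho\le\sigma$. As all of this data coincides for the two pairs, the two largest such $\sigma$ coincide, giving $K(p,q)=K(p,q-p)$.

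The only subtle point — and the one I would check explicitly — is that the internal case split of the proof of Lemma~\ref{lemma3} for even $p$ is controlled by the parity of $k+l$, which flips when $k$ is replaced by $k-1$. This is harmless: for even $p$ one has $\ol{r}$ odd, and since $r'_\rho\equiv r_\rho+\tfrac{p}{2}\pmod p$ the conditions $r_\rho\le\rintfrac{p}{2}-1$ and $r'_\rho\ge\rintfrac{p}{2}$ are equivalent. Thus, even though $(p,q)$ and $(p,q-p)$ may be handled through different sub-cases (C) and (D), Lemma~\ref{lemma3} assigns them the same residue-based criterion and hence the same value of $K$. Verifying this compatibility is the main obstacle; the remainder is routine bookkeeping.
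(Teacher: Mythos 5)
Your argument is correct and is essentially the paper's own proof: the paper disposes of Lemma~\ref{lemma4} in one line by observing that, via Lemma~\ref{lemma3}, $K(p,q)$ depends only on $p$ and the residue $r$ of $q$ modulo $p$. Your additional check that for even $p$ the conditions $r_\rho\le\rintfrac{p}{2}-1$ and $r'_\rho\ge\rintfrac{p}{2}$ are equivalent (so that the flip of the parity of $k+l$ between the sub-cases (C) and (D) is harmless) is a worthwhile piece of care that the paper leaves implicit, but it does not change the route.
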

\begin{proof}
By Lemma~\ref{lemma3}, the quantity $K(p,q)$ depends only on the residue $r$ of $q\bmod p$. Thus it does not change when $q$ is replaced
by $q-p$.
\end{proof}
\begin{proof}[Proof of Theorem~\ref{thm:sch1}]
We proceed by induction on $n$. For $n=1$, we have $\frac{q}{p}=a_0+\frac{1}{a_1}$, that is, $q=a_0p+1$ and $p=a_1$. 
In this case $r=\ol{r}=1$.  
By the definition of $r_\rho$, we have that $r_\rho=\rho\bmod p$.
Using Lemma~\ref{lemma3}, we obtain
$K(p,q)=\lceil\frac{p}{2}\rceil-1=\lfloor\frac{a_n-1}{2}\rfloor$. 

The inductive step is provided by Lemma~\ref{lemma4}. Namely, if $\frac{q}{p}=[a_0,\ldots,a_n]$, then we replace $q$ by $q'=q-a_0p$. By
Lemma~\ref{lemma4}, we have $K(p,q)=K(p,q')$. On the other hand, $\frac{q'}{p}=[a_1,\ldots,a_n]$. The induction step follows.
\end{proof}

\end{document}